\DeclareSymbolFont{cyrletters}{OT2}{wncyr}{m}{n}
\DeclareMathSymbol{\Sha}{\mathalpha}{cyrletters}{"58}
\newtheorem{theorem}{Theorem}[section]
\newtheorem{lemma}[theorem]{Lemma}
\newtheorem{corollary}[theorem]{Corollary}
\newtheorem{proposition}[theorem]{Proposition}
\theoremstyle{definition}
\newtheorem{definition}[theorem]{Definition}
\theoremstyle{remark}
\numberwithin{equation}{section}
\newcommand{\C}{\mathbb{C}}
\newcommand{\R}{\mathbb{R}}
\newcommand{\ra}{\rightarrow}
\newcommand{\il}{\langle}
\newcommand{\ir}{\rangle}
\begin{document}
\title{Sharp estimates for the commutators of the Hilbert, Riesz transforms and the Beurling-Ahlfors operator on weighted Lebesgue spaces}
\author{Daewon Chung}
\date{\today}
\maketitle

\begin{abstract}
We prove that the operator norm on weighted Lebesgue space
$L^2(w)$ of the commutators of the Hilbert, Riesz and Beurling
transforms with a $BMO$ function $b$ depends quadratically on the
$A_2$-characteristic of the weight, as opposed to the linear
dependence known to hold for the operators themselves. It is known
that the operator norms of these commutators can be controlled by
the norm of the commutator with appropriate Haar shift operators,
and we prove the estimate for these commutators. For the shift
operator corresponding to the Hilbert transform we use Bellman
function methods, however there is now a general theorem for a
class of Haar shift operators that can be used instead to deduce
similar results. We invoke this general theorem to obtain the
corresponding result for the Riesz transforms and the
Beurling-Ahlfors operator. We can then extrapolate to $L^p(w)$,
and the results are sharp for $1<p<\infty$.
 \footnote{Key words and phrases: Operator-weighted
inequalities, Commutator, Hilbert transform,
Paraproduct}\footnote{2000 Mathematics Subject Classification.
Primary 42A50; Secondary 47B47.}
\end{abstract}

\section{Introduction}
We say $w$ is a weight if it is positive almost everywhere and
locally integrable. The norm of $f\in L^p(w)$ is \begin{equation*}
\|f\|_{L^p(w)}:=\bigg(\int_{\mathbb{R}}|f(x)|^pw(x)dx\bigg)^{1/p}\,.\end{equation*}
Helson and Szeg\"{o} first found necessary and sufficient conditions for the boundedness of the Hilbert
transform
\begin{equation*}
Hf(x)=p.v.\frac{1}{\pi}\int\frac{f(y)}{x-y}dy\end{equation*}
 in weighted Lebesgue spaces in \cite{HS}. Hunt,
Muckenhoupt, and Wheeden showed in \cite{HMW} that a new necessary
and sufficient condition for boundedness of the Hilbert transform
in $L^p(w)$ is that the weight satisfies the $A_p$ condition,
namely: \begin{equation}\label{In:Ap}[w]_{A_p}:=\sup_I\langle
w\rangle_I\langle
w^{-1/(p-1)}\rangle^{p-1}_I<\infty\,,\end{equation} where we
denote the average over the interval $I$ by
$\langle\cdot\rangle_I\,,$ and we take the supremum over all
intervals in $\R\,.$ After one year, Coifman and Fefferman
extended in \cite{CF} the result to a larger class of convolution
singular integrals with standard kernels. Recently, many authors
have been interested in finding the sharp bounds for the operator
norms in terms of the $A_p$-characteristic $[w]_{A_p}$ of the
weight. That is, one looks for a function $\phi(x)$, sharp in
terms of its growth, such that
\begin{equation*}\|Tf\|_{L^p(w)}\leq
C\phi\big([w]_{A_p}\big)\|f\|_{L^p(w)}\,.\end{equation*} For
$T=M,$ the Hardy-Littlewood maximal function, S. Buckley \cite{SB}
showed that $\phi(x)=x^{1/(p-1)}$ is the sharp rate of growth for
all $1<p<\infty\,.$ He also showed in \cite{SB} that $\phi(x)=x^2$
works for the Hilbert transform in $L^2(w)\,.$ S. Petermichl and
S. Pott improved the result to $\phi(x)=x^{3/2}\,,$ for the
Hilbert transform in $L^2(w)\,$ in \cite{PP1}. More recently, S.
Petermichl proved in \cite{SP2} the linear dependence,
$\phi(x)=x,$ for the Hilbert transform in $L^2(w)\,$
\begin{equation*} \|Hf\|_{L^2(w)}\leq
C[w]_{A_2}\|f\|_{L^2(w)}\,,\end{equation*} by estimating the
operator norm of the dyadic shift. Linear bounds in $L^2(w)$ were
also obtained by O. Beznosova for the dyadic paraproduct
\cite{OB}. Most recently there are new proofs for the linear
estimates in $L^2(w)$ for some operators including
 the Hilbert transform and the dyadic paraproduct, \cite{LPS} and
 \cite{CMP}. The conjecture is that all the Calder\'{o}n-Zygmund
 singular integral operator obey linear bounds in weighted
 $L^2\,.$ So far this is known only for the Beurling-Ahlfors operator \cite{DV}, \cite{PV},
 the Hilbert transform \cite{SP2}, Riesz transforms \cite{SP3}, the martingale transform \cite{WI},
 the square function \cite{HTV}, \cite{CP} and well localized dyadic
 operators \cite{ML}, \cite{LPS}, \cite{CMP}. It is now also known for Calder\'{o}n-Zygmund convolutions operators that are
 smooth averages of well localized operators \cite{AV}.\\

In this paper, we are interested in obtaining sharp weight
inequalities for the commutators of the Hilbert, Riesz, and
Beurling transforms with multiplication by a locally integrable
function $b\in BMO\,.$
\subsection{Commutators: main results}
 Commutator operators are widely encountered
and studied in many problems in PDEs,
  and Harmonic Analysis. One classical result of Coifman, Rochberg, and Weiss states in \cite{CRW}
  that, for the Calder\'{o}n-Zygmund singular integral operator
  with a smooth kernel,
  $[b,T]f:=bT(f)-T(bf)$ is a bounded operator on $L^p_{\mathbb{R}^n}$, $1<p<\infty\,,$ when $b$ is a BMO
  function. Weighted estimates for the commutator have been studied in \cite {ABKP}, \cite{P1}, \cite{P2}, and \cite{P3}.
  Note that the commutator $[b,T]$ is more singular than the associated singular integral operator $T$, in particular,
  it does not satisfy the corresponding weak $(1,1)$ estimate. However one can find a weaker estimate in \cite{P2}.
  In 1997, C. P\'{e}rez \cite{P2} obtained the following result concerning commutators of singular integrals, for $1<p<\infty\,,$
  $$\|[b,T]f\|_{L^p(w)}\leq C\|b\|_{BMO}[w]_{A_{\infty}}^2\|M^2f\|_{L^p(w)}\,,$$ where $M^2=M\circ M$ denotes
  the Hardy-Littlewood maximal function iterated twice. With this result
and Buckley's sharp estimate for the maximal function \cite{SB}
one can immediately conclude that
  $$\|[b,T]\|_{L^p(w)\ra L^p(w)}\leq C [w]^2_{A_{\infty}}[w]_{A_p}^{\frac{2}{p-1}} \|b\|_{BMO}\,.$$
In this paper we show that for $T$ the Hilbert, Riesz, Beurling
transform, for $1<p\leq 2$ one can drop $[w]_{A_{\infty}}$ term,
in the above estimate, and this is sharp. However for $p>2\,,$ the
$L^p(w)$-norm of $[b,T]$ is bounded above by
$\|b\|_{BMO^d}[w]^2_{A_p^2}\,.$ For $T=H$ the Hilbert transform we
prove, using Bellman function techniques similar to those used in
\cite{OB}, \cite{SP2}, the following Theorem.
\begin{theorem}\label{Pr:Mr}
There exists a constant $C>0$, such that
$$ \| [b,H]f \|_{L^p(w)\ra L^p(w)} \leq C \|b\|_{BMO} [w]_{A_p}^{2\max\{1,\frac{1}{p-1}\}}\|f\|_{L^p(w)}\,,$$
and this is sharp for $1<p<\infty$.
\end{theorem}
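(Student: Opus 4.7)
The plan is to first establish the quadratic $L^2(w)$ estimate
\begin{equation*}
\|[b,H]f\|_{L^2(w)} \leq C\|b\|_{BMO}[w]_{A_2}^2\|f\|_{L^2(w)}\,,
\end{equation*}
and then obtain the general $L^p(w)$ bound by invoking the sharp extrapolation theorem of Dragi\v{c}evi\'c--Grafakos--Pereyra--Petermichl, which converts a bound of the form $[w]_{A_2}^{\alpha}$ at $p=2$ into $[w]_{A_p}^{\alpha\max\{1,1/(p-1)\}}$ at general $p$. With $\alpha=2$ this is exactly the exponent in the statement. Hence the core problem is the quadratic $A_2$-bound at $p=2$.

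For that I would use S. Petermichl's averaging representation of $H$: up to a multiplicative constant, $H$ is the average over translated and dilated dyadic grids $\mathcal{D}$ of dyadic Haar shift operators $\operatorname{Sh}^{\mathcal{D}}$. Since taking the commutator with $b$ is linear in the operator slot, it suffices to prove the quadratic $A_2$-bound for $[b,\operatorname{Sh}^{\mathcal{D}}]$ uniformly in $\mathcal{D}$. The key structural step is to expand $bf$ in the Haar basis adapted to $\mathcal{D}$ and write
\begin{equation*}
bf = \pi_b f + \pi_b^{\ast} f + \Lambda(b,f)\,,
\end{equation*}
where $\pi_b$ is the dyadic paraproduct with symbol $b$, $\pi_b^{\ast}$ its adjoint, and $\Lambda$ a diagonal remainder. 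Substituting into $[b,\operatorname{Sh}^{\mathcal{D}}]f = b\,\operatorname{Sh}^{\mathcal{D}}f - \operatorname{Sh}^{\mathcal{D}}(bf)$ decomposes the commutator into a finite sum of terms, some of which factor cleanly as a paraproduct (or its adjoint) composed with the shift, while the rest are new ``shifted paraproduct'' operators intrinsic to the Haar shift structure.

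The main obstacle is bounding these latter pieces. For the compositional terms one already has the desired estimate from the two known linear bounds: Beznosova's $\|\pi_b\|_{L^2(w)\to L^2(w)} \leq C\|b\|_{BMO}[w]_{A_2}$ \cite{OB} and Petermichl's $\|\operatorname{Sh}^{\mathcal{D}}\|_{L^2(w)\to L^2(w)} \leq C[w]_{A_2}$ \cite{SP2}, whose product is $[w]_{A_2}^2$. For each shifted paraproduct I would construct a multi-variable Bellman function in the local averages $\langle w\rangle_I$, $\langle w^{-1}\rangle_I$, $\langle b^2\rangle_I$, and suitable moments of $f$, combining the convex-analytic ideas from \cite{OB} and \cite{SP2}. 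Summing the resulting concavity inequality telescopically over the dyadic tree should yield a bilinear bound that is linear in $[w]_{A_2}$ and linear in $\|b\|_{BMO}$, which, together with the shift's linear $A_2$-bound, assembles to the quadratic rate.

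Sharpness at $p=2$ is obtained by the standard example $w(x)=|x|^{1-\epsilon}$, $b(x)=\log|x|\in BMO$, and $f=\chi_{(0,1)}$; a direct asymptotic analysis of $[b,H]f$ as $\epsilon\to 0^+$ forces the exponent on $[w]_{A_2}$ to be at least $2$. Because the extrapolation theorem is itself sharp, the exponent $2\max\{1,1/(p-1)\}$ cannot be improved in the full range $1<p<\infty$; for $p>2$ one can alternatively recover sharpness by duality, exploiting $[w^{1-p'}]_{A_{p'}}=[w]_{A_p}^{1/(p-1)}$.
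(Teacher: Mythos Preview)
Your overall architecture for the upper bound---quadratic $A_2$ estimate via Petermichl's averaging, the paraproduct decomposition $bf=\pi_bf+\pi_b^\ast f+\lambda_b f$ of multiplication, linear bounds of Beznosova and Petermichl for the compositional pieces, and a Bellman-function treatment of the remaining $[\lambda_b,S]$---is exactly the paper's route. One small clarification: in the paper the ``shifted'' piece $[\lambda_b,S]$ is bounded \emph{linearly} in $[w]_{A_2}$ on its own; the quadratic rate comes solely from the $[\pi_b,S]$ and $[\pi_b^\ast,S]$ terms via $\|\pi_b\|\,\|S\|\lesssim \|b\|_{BMO}[w]_{A_2}^2$, not from multiplying a linear bound on the shifted piece by another factor of $\|S\|$.

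Your sharpness argument, however, has two genuine gaps. First, the test function $f=\chi_{(0,1)}$ does not work: neither $b=\log|x|$ nor $f$ depends on $\epsilon$, so $[b,H]f$ is a fixed function and $\|[b,H]f\|_{L^2(w)}/\|f\|_{L^2(w)}$ stays bounded as $\epsilon\to0^+$. The paper (following P\'erez) takes $f(x)=x^{-1+\epsilon}\chi_{(0,1)}(x)$ and shows the pointwise bound $|[b,H]f(x)|\geq \epsilon^{-2}f(x)$ on $(0,1)$, which is what produces the $[w]_{A_2}^2$ lower bound. Second, ``the extrapolation theorem is itself sharp'' does \emph{not} imply that the extrapolated exponent is sharp for any particular operator; it only says the exponent cannot be improved \emph{in general}. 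To get sharpness for $p\neq2$ the paper runs the explicit example with $w(x)=|x|^{(1-\epsilon)(p-1)}$ (giving $[w]_{A_p}\sim\epsilon^{1-p}$) for $1<p\le2$, and then uses the self-adjointness of $[b,H]$ together with $[w^{1-p'}]_{A_{p'}}=[w]_{A_p}^{1/(p-1)}$ to transfer to $p>2$. You mention the duality step, but you still need a direct example on one side of $p=2$.
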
 Most of the work goes into showing the quadratic
estimate for $p=2$, sharp extrapolation \cite{DGPP} then provides
the right rate of growth for $p\neq 2$. An example of C. P\'{e}rez
shows the rates are sharp. Our method involves the use of the
dyadic paraproduct $\pi_b$ and its adjoint $\pi_b^*$, both of
which obey linear estimates in $L^2(w)$, see \cite{OB}, like the
Hilbert transform. It also uses Petermichl description of the
Hilbert transform as an average of dyadic shift operators $S$,
\cite{SP1} and reduces the estimate to obtaining corresponding
estimates for the commutator $[b,S]$. After we decompose this
commutator in three parts:
$$[b,S]f=[\pi_b,S]f+[\pi^{\ast}_b,S]f+[\lambda_b,S]f\,,$$
we estimate each commutator separately. This decomposition has
been used before to analyze the commutator, \cite{SP1}, \cite{ML},
\cite{LPPW}. For precise definitions and detail derivations, see
Section 1.2. The first two commutators immediately give the
desired quadratic estimates in $L^2(w)$ from the known linear
bounds of the operators commuted. For the third commutator we can
prove a better than quadratic bound, in fact a linear bound. The
following Theorem will be the crucial part of the proof.
\begin{theorem}\label{DS:mc}
There exists a constant $C>0$, such that
\begin{equation}\label{DS:mt}\|[\lambda_b,S]\|_{L^2(w)\to
L^2(w)}\leq C[w]_{A^d_2}\|b\|_{BMO^{\,d}}\,,\end{equation} for all
$b\in BMO^{\,d}$ and $w\in A^d_2$.
\end{theorem}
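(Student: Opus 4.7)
The plan is to expand $[\lambda_b,S]$ in the Haar basis, recast the weighted $L^2$-norm as a bilinear form, and bound that form using a weighted Carleson embedding.

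First I would compute the commutator explicitly. Using that $\lambda_b f=\sum_I \langle b,h_I\rangle\langle f,h_I\rangle|I|^{-1}\mathbf{1}_I$ acts diagonally with respect to the Haar basis, and that $Sh_I$ is a one-step shift to the children of $I$, one can write $\lambda_b(Sf)$ and $S(\lambda_b f)$ as explicit sums indexed by dyadic intervals and their children. The difference collapses into a small collection of elementary pieces, each of which has the structure of a single Haar coefficient $\langle b,h_I\rangle$ (or a nearby variant) times a martingale coefficient of $f$, integrated against a Haar function supported on $I$ or one of its children.

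Second, I would pair with $g\in L^2(w^{-1})$ and, using Cauchy--Schwarz at each dyadic scale, reduce the bilinear estimate to controlling sums of the form $\sum_I |\langle b,h_I\rangle|\,|\langle f,h_I\rangle|\,|\langle g\rangle_I|$ and their adjoints. The input from the hypothesis on $b$ is that the sequence $\{|\langle b,h_I\rangle|^2\}_I$ is a Carleson sequence of intensity at most $\|b\|_{BMO^d}^2$, a direct consequence of the definition of $BMO^d$.

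Third, the decisive analytic step is to invoke Beznosova's weighted Carleson embedding theorem \cite{OB}, which upgrades the Carleson intensity estimate into a weighted estimate with linear dependence on $[w]_{A_2^d}$. Combining this embedding with standard dyadic square function estimates on $L^2(w)$ and $L^2(w^{-1})$, one obtains a bound of the form
\begin{equation*}
\bigl|\langle [\lambda_b,S]f,g\rangle\bigr| \leq C\,[w]_{A_2^d}\,\|b\|_{BMO^d}\,\|f\|_{L^2(w)}\,\|g\|_{L^2(w^{-1})},
\end{equation*}
which is the content of \eqref{DS:mt} after duality.

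The main obstacle will be the first step: the commutator $[\lambda_b,S]$ produces several cross-terms living on parent, self, and child scales, and one must choose the grouping so that each piece is amenable to a \emph{single} application of the weighted Carleson embedding, rather than recreating the two-paraproduct structure of $[\pi_b,S]$ which forces a quadratic bound. The reason a linear bound is achievable here is precisely the diagonal character of $\lambda_b$: in each resulting term only one Haar coefficient of $b$ appears, so only one application of the embedding is required to absorb the $BMO^d$ information, and the remaining factors are controlled by the $A_2^d$-weighted square functions of $f$ and $g$, giving the sharp linear dependence on $[w]_{A_2^d}$.
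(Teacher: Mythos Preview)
Your proposal has a genuine gap at the very first step, and this propagates through the rest of the sketch.

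First, a minor but telling slip: the formula you wrote, $\lambda_b f=\sum_I \langle b,h_I\rangle\langle f,h_I\rangle|I|^{-1}\mathbf{1}_I$, is $\pi_b^*$, not $\lambda_b$. The correct operator is $\lambda_b f=\sum_I \langle b\rangle_I\langle f,h_I\rangle h_I$, which is indeed Haar-diagonal as you say. Carrying out the commutator with $S$ (as the paper does in Section~3) gives
\[
[\lambda_b,S]f=-\sum_{I\in\mathcal{D}}\Delta_I b\,\langle f,h_I\rangle\,(h_{I_+}+h_{I_-}),\qquad \Delta_I b=\tfrac12(\langle b\rangle_{I_+}-\langle b\rangle_{I_-})=|I|^{-1/2}\langle b,h_I\rangle.
\]
Pairing with $g$ therefore produces $\sum_I \Delta_I b\,\langle f,h_I\rangle\,\langle g,h_{I_\pm}\rangle$: two Haar coefficients and \emph{no} average of $g$. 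Your claimed reduction to sums of the form $\sum_I |\langle b,h_I\rangle|\,|\langle f,h_I\rangle|\,|\langle g\rangle_I|$ does not arise.

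The deeper issue is which $BMO$ information is relevant. You propose to exploit that $\{|\langle b,h_I\rangle|^2\}$ is a Carleson sequence and then apply Beznosova's weighted Carleson embedding. But that embedding controls sums of weighted \emph{averages} against a Carleson sequence; here neither factor is an average, and there is no Carleson sum to embed. The feature of $b$ that actually drives the linear bound is the pointwise estimate $|\Delta_I b|\le 2\|b\|_{BMO^d}$, which makes $[\lambda_b,S]$ a Haar shift operator with uniformly bounded coefficients. Once you see this, the problem reduces to the linear $A_2$ bound for a generic Haar shift, and the paper proceeds in one of two ways: either it expands $f$ and $g$ in the \emph{weighted} Haar systems $\{h_I^{w^{-1}}\}$, $\{h_J^{w}\}$ and runs a Petermichl-style four-case analysis (Sections~3 and~5, using the truncated-shift Lemma~\ref{TS} and the Weighted Carleson Embedding Theorem~\ref{CIT} applied to $\langle f\rangle_{J,w^{-1}}$), or it simply invokes the general Haar-shift theorem of \cite{LPS}, \cite{CMP}. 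Your sketch does neither, and the combination ``Carleson embedding for $b$ plus weighted square-function bounds for $f,g$'' would, if made precise, spend a factor of $[w]_{A_2^d}$ on each square function and yield at best a quadratic bound.
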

This theorem is an immediate consequence of results in
\cite{HLRV}, \cite{LPS} and \cite{CMP}, since the operator
$[\lambda_b,S]$ belongs to the class of Haar shift operators for
which they can prove linear bounds. We present a different proof
of this result and others, using Bellman function techniques and
bilinear Carleson embedding theorems, very much in the spirit of
\cite{SP1} and \cite{OB}. These arguments were found independently
by the author, and we think they can be
of interest.\\

We then observe that for any Haar shift operator $T$ as defined in
\cite{LPS} the commutator $[\lambda_b, T]$ is again a Haar shift
operator, and therefore it obeys linear bounds in
$A_2$-characteristic of the weight as in Theorem \ref{DS:mc}. As a
consequence, we obtain quadratic bounds for all commutators of
Haar shift operators and BMO function $b\,.$ In particular, this
holds true for Haar shift operators in $\R^n$ whose averages
recover the Riesz transforms \cite{SP3} and for martingale
transforms in $\R^2$ whose averages recover the Beurling-Ahlfors
operator \cite{PV}, \cite{DV}. Extrapolation will provide $L^p(w)$
bounds which turn out to be sharp for the Riesz transforms and
Beurling-Ahlfors operators as well. The following Theorem holds
\begin{theorem}\label{ExCo} Let $T_{\tau}$ be the first class of Haar shift operators of index $\tau\,.$ Its convex hull include the Hilbert transform,
 Riesz transforms, the Beurling-Ahlfors operator and so on. Then, there exists a constant $C(\tau,n,p)$ which only depend
  on $\tau\,,$ $n$ and $p$ such that
$$\|[b,T_{\tau}]\|_{L^p(w)\ra L^p(w)}\leq C(\tau ,n ,p)[w]_{A_p}^{2\max\{1,\frac{1}{p-1}\}}\|b\|_{BMO}$$
\end{theorem}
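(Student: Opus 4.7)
The plan is to mirror, step by step, the argument already carried out for the Hilbert transform, replacing the dyadic shift $S$ by the general Haar shift operator $T_\tau$, and then to apply sharp extrapolation. First I would decompose the commutator using the standard paraproduct splitting of the multiplication operator $M_b$:
$$[b, T_\tau]f = [\pi_b, T_\tau]f + [\pi_b^{\ast}, T_\tau]f + [\lambda_b, T_\tau]f,$$
exactly as described in the sketch preceding Theorem \ref{DS:mc}. The first two pieces are bounded by the triangle inequality,
$$\|[\pi_b, T_\tau]f\|_{L^2(w)} \leq \|\pi_b T_\tau f\|_{L^2(w)} + \|T_\tau \pi_b f\|_{L^2(w)},$$
and analogously for $[\pi_b^{\ast}, T_\tau]$. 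Each summand is a composition of two operators that are known to satisfy linear $A_2$ bounds in $L^2(w)$: the paraproduct $\pi_b$ and its adjoint by Beznosova \cite{OB}, and the Haar shift $T_\tau$ by the general theorem of \cite{LPS}, \cite{CMP} (or \cite{HLRV}). Composing the two linear bounds yields a quadratic $A_2$ estimate for each of these two commutators, with constants depending only on $\tau$ and $n$.

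Second, for the remaining piece $[\lambda_b, T_\tau]$ I would invoke the structural observation highlighted in the paragraph preceding the theorem: since $\lambda_b$ is itself a Haar multiplier, the commutator of $\lambda_b$ with any Haar shift operator of index $\tau$ is again a Haar shift operator, of some index $\tau'$ depending only on $\tau$, whose Haar coefficients are controlled by $\|b\|_{BMO^d}$. Applying the general linear $A_2$ theorem of \cite{LPS}, \cite{CMP} once more therefore gives the \emph{linear} bound
$$\|[\lambda_b, T_\tau]\|_{L^2(w) \to L^2(w)} \leq C(\tau, n)\,[w]_{A_2}\,\|b\|_{BMO},$$
which is even stronger than what is needed. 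Combining the three pieces produces the $L^2(w)$ quadratic estimate
$$\|[b, T_\tau]\|_{L^2(w) \to L^2(w)} \leq C(\tau, n)\,[w]_{A_2}^2\,\|b\|_{BMO}.$$

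Finally, to pass from $p=2$ to general $1 < p < \infty$, I would invoke the sharp extrapolation theorem of Dragi\v{c}evi\'c, Grafakos, Pereyra and Petermichl \cite{DGPP}: a linear operator satisfying $\|T\|_{L^2(w) \to L^2(w)} \leq C\,[w]_{A_2}^{\alpha}$ automatically satisfies $\|T\|_{L^p(w) \to L^p(w)} \leq C(p)\,[w]_{A_p}^{\alpha \max\{1, 1/(p-1)\}}$. Taking $\alpha = 2$ delivers exactly the exponent $2\max\{1, 1/(p-1)\}$ that appears in the statement, and the constant depends only on $\tau$, $n$ and $p$, as required.

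The main obstacle, and the only non-mechanical step, is the structural claim used in the second paragraph: that $[\lambda_b, T_\tau]$ is again a Haar shift operator of complexity controlled by $\tau$, with Haar coefficients uniformly bounded by $\|b\|_{BMO^d}$. Verifying this requires unpacking the definition of the class of Haar shift operators of index $\tau$ from \cite{LPS}, computing the commutator on a generic Haar basis element, and checking that the resulting kernel still satisfies the required support, cancellation, and normalization conditions with index bounded in terms of $\tau$ alone. Once this is established, the remainder of the proof is a clean assembly of known linear $A_2$ bounds, the triangle inequality, and sharp extrapolation.
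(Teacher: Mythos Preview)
Your proposal is correct and follows essentially the same approach as the paper: the paper proves the structural claim you flag as the main obstacle in Lemma \ref{lExCo}, showing that $[\lambda_b, T_\tau]$ is in fact a Haar shift operator of the \emph{same} index $\tau$ (not merely some $\tau'$) with coefficients $a_{Q',Q''}(\langle b\rangle_{Q''}-\langle b\rangle_{Q'})$ bounded by $\|b\|_{BMO}\,|a_{Q',Q''}|$, and then assembles the quadratic $L^2(w)$ bound and extrapolates exactly as you describe. The only point you omit is the passage from compactly supported $b$ to general $b\in BMO$, which the paper handles via the truncation Lemma \ref{GJB}.
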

See Section 10 for definitions and precise statements of these
results.
\subsection{The Hilbert transform case} The bilinear operator $$fH(g)+H(f)g$$
maps $L^2\times L^2$ into $H^1$, here $H$ is the Hilbert transform
and $H^1$ is the real Hardy space defined by
$$H^1(\R):=\{f\in L^1(\R):\,Hf\in L^1(\R)\}$$
with norm $$\|f\|_{H^1}=\|f\|_{L^1}+\|Hf\|_{L^1}\,.$$ Because the dual of $H^1$ is $BMO$, we will
pair with a $BMO$ function $b\,.$ Using that $H^{\ast}=-H$, we obtain that $$\il\,fH(g)+H(f)g,\,b\,\ir=\il\, f,\, H(g)b-H(gb)\,\ir\,.$$
Hence the operator $g\mapsto H(g)b-H(gb)$ should be $L^2$ bounded. This expression $H(g)b-H(gb)$ is called the commutator of $H$ with the $BMO$ function $b\,.$ More generally, we define as follows.
\begin{definition}
The commutator of the Hilbert transform $H$ with a function $b$ is defined as $$[b,H](f)=bH(f)-H(bf)\,.$$
\end{definition}
Our main concern in this paper is to prove that the commutator
$[b,H]$, as an operator from $L^2_{\R}(w)$ into $L^2_{\R}(w)$ is
bounded by the square of the $A_2$-characteristic, $[w]_{A_2}\,,$
of the weight times the $BMO$ norm, of $b\,,$ $\|b\|_{BMO}\,,$
where
$$\|b\|_{BMO}:=\sup_{I}\frac{1}{|\,I|}\int_I|\,b(x)-\il
b\ir_I|dx\,.$$ The supremum is taken over all intervals in $\R\,.$
Note that when we restrict the supremum to dyadic intervals this
will define $BMO^{\,d}$ and we denote this dyadic $BMO$ norm by
$\|\cdot\|_{BMO^{\,d}}\,.$ We now state our main results.
\begin{theorem}\label{Mr:t1}There exists $C$ such that for all $w\in A_2\,,$ $$\|[b,H]\|_{L^2(w)\ra L^2(w)}\leq C[w]^2_{A_2}\|b\|_{BMO}\,,$$
for all $b\in BMO\,.$
\end{theorem}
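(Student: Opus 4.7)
The plan is to reduce the estimate for $[b,H]$ to a dyadic estimate for the commutator $[b,S]$, where $S$ is Petermichl's dyadic shift operator, and then to decompose $b$ multiplication via a paraproduct decomposition into three pieces, each of which can be estimated separately using already available linear bounds.

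First I would invoke Petermichl's representation of the Hilbert transform as a weighted average of the dyadic shift operators $S^{\alpha,r}$ taken over translated and dilated dyadic grids. Because $[w]_{A_2^d}\leq [w]_{A_2}$ and $\|b\|_{BMO^d}\lesssim \|b\|_{BMO}$ uniformly over such grids, Minkowski's inequality reduces the theorem to a uniform bound
\[
\|[b,S]\|_{L^2(w)\to L^2(w)}\leq C[w]_{A_2^d}^{2}\|b\|_{BMO^d}
\]
for a fixed dyadic grid. Then, writing multiplication by $b$ as $M_b=\pi_b+\pi_b^{*}+\lambda_b$, where $\pi_b$ is the dyadic paraproduct, $\pi_b^{*}$ its adjoint, and $\lambda_b$ the remaining ``diagonal'' piece (pairing the Haar coefficients of $b$ and $f$ on the same interval), I get the splitting
\[
[b,S]f=[\pi_b,S]f+[\pi_b^{*},S]f+[\lambda_b,S]f.
\]

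Second, the first two commutators can be dispatched by the triangle inequality. Beznosova's theorem gives the linear weighted estimate $\|\pi_b\|_{L^2(w)\to L^2(w)}\lesssim[w]_{A_2^d}\|b\|_{BMO^d}$, and the same bound by duality for $\pi_b^{*}$; Petermichl's theorem gives the linear bound $\|S\|_{L^2(w)\to L^2(w)}\lesssim[w]_{A_2^d}$. Composing these yields
\[
\|[\pi_b,S]\|+\|[\pi_b^{*},S]\|\leq C[w]_{A_2^d}^{2}\|b\|_{BMO^d},
\]
which is exactly the target rate. For the remaining diagonal commutator $[\lambda_b,S]$, Theorem \ref{DS:mc} provides a \emph{linear} bound $[w]_{A_2^d}\|b\|_{BMO^d}$, strictly better than what is needed. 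Summing the three pieces gives the quadratic estimate in the dyadic setting, and averaging over grids then gives the full statement of Theorem \ref{Mr:t1}.

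The main obstacle is clearly Theorem \ref{DS:mc}: the linear $A_2^d$ bound for $[\lambda_b,S]$. The triangle-inequality trick fails here because $\lambda_b$ on its own is not a bounded operator on $L^2(w)$ with a good $A_2$ dependence, so one must exploit the cancellation inherent in the commutator. The plan indicated in the introduction is to observe that $[\lambda_b,S]$ is itself a Haar shift operator (with coefficients involving the Haar coefficients $b_I$), so that one can either quote the general Haar-shift linear-bound theorems of \cite{LPS}, \cite{CMP}, \cite{HLRV}, or, as the author prefers, give a self-contained proof using a Bellman function together with a bilinear Carleson embedding theorem, in the style of the Petermichl and Beznosova arguments. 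Once that linear bound is established, the assembly above produces Theorem \ref{Mr:t1}; sharp extrapolation then delivers the $L^p(w)$ version stated in Theorem \ref{Pr:Mr}.
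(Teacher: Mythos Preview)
Your proposal is correct and follows essentially the same approach as the paper: reduce to the dyadic shift via Petermichl's averaging, split $M_b=\pi_b+\pi_b^{*}+\lambda_b$, handle the first two commutators by composing Beznosova's and Petermichl's linear bounds, and invoke Theorem~\ref{DS:mc} for the diagonal piece. The only step you omit is the paper's final approximation argument (Lemma~\ref{GJB} together with Fatou's lemma) used to pass from compactly supported $b$---needed to justify the formal paraproduct decomposition---to arbitrary $b\in BMO$.
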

Once we have boundedness and sharpness for the crucial case $p=2$,
we can carry out the power of the $A_p$-characteristic, for any
$1<p<\infty\,,$ using the sharp extrapolation theorem \cite{DGPP}
to obtain Theorem \ref{Pr:Mr}. Furthermore, an example of C.
P\'{e}rez \cite{PP} shows this quadratic power is sharp.  In
\cite{SP1}, S. Petermichl showed that the norm of the commutator
of the Hilbert transform is bounded by the supremum of the norms
of the commutator of certain shift operators. This result follows
after writing the kernel of the Hilbert transform as a well chosen
averages of certain dyadic shift operators discovered by
Petermichl. More precisely, S. Petermichl showed there is a non
zero constant $C$ such that
\begin{equation} \|[b,H]\|\leq
C\sup_{\alpha,r}\|[b,S^{\alpha,r}]\|\,,\end{equation} where the
dyadic shift operator $S^{\alpha, r}$ is defined by $$S^{\alpha,
r}f=\sum_{I\in\mathcal{D}^{\alpha, r}}\il f
,h_I\ir(h_{I_-}-h_{I_+})\,.$$ Where $h_I$ denotes the Haar
function associated to the interval $I$, $I_{\pm}$ denote the left
and right halves of $I$, and $\mathcal{D}^{\alpha,r}$ is a shifted
and dilated system of dyadic intervals. Denote by $S$ the shift
operator corresponding to the standard dyadic intervals
$\mathcal{D}\,.$ See the next section
for a precise definition.\\

Let us consider a compactly supported $b\in BMO^{\,d}$ and $f\in
L^2\,.$ Expanding  $b$ and $f$ in the Haar system associated to
the dyadic intervals $\mathcal{D}\,,$
$$b(x)=\sum_{I\in\mathcal{D}}\il b,h_I\ir h_I(x),~~f(x)=\sum_{J\in\mathcal{D}}\il f,h_J\ir h_J(x)\,;$$
formally, we get the multiplication of $b$ and $f$ to be broken into three terms,
\begin{equation}bf=\pi^{\ast}_b(f)+\pi_b(f)+\lambda_b(f)\,,\label{Ma:bf}\end{equation}
where $\pi_b$ is the dyadic paraproduct, $\pi^{\ast}_b$ is its
adjoint and $\lambda_b(\cdot)=\pi_{(\cdot)}b\,,$ defined as
follows
\begin{equation*}\pi^*_b(f)(x):=\sum_{I\in\mathcal{D}}\il b,h_I\ir\il f,h_I\ir h^2_I(x)\,,\end{equation*}
\begin{equation*}\pi_b(f)(x):=\sum_{I\in\mathcal{D}}\il b,h_I\ir \il f\ir_I h_I(x)\,,\end{equation*}
\begin{equation*}\lambda_b(f)(x):=\sum_{I\in\mathcal{D}}\il b\ir_I\il f,h_I\ir  h_I(x)\,.\end{equation*}
Thus, we have
\begin{equation}[b,S]=[\pi^*_b,S]+[\pi_b,S]+[\lambda_b,S]\,,\label{Ma:sp}\end{equation}
where $$S(f)=\sum_{I\in\mathcal{D}}\langle f,h_I\rangle
(h_{I_-}-h_{I_+})$$ and we can estimate each term separately.
Notice that both $\pi_b$ and $\pi^{\ast}_b$ are bounded operators
in $L^p$ for $b\in BMO\,,$ despite the fact that multipication by
$b$ is not a bounded operator in $L^2$ unless $b$ is bounded
$(L^{\infty})\,.$ Therefore, $\lambda_b$ can not be a bounded
operator in $L^p\,.$ However $[\lambda_b,S]$ will be bounded on
$L^p(w)$ and will be better behaved than $[b,S]\,.$ Decomposition
(\ref{Ma:sp}) was used to analyze the commutator with the shift
operator first by Petermichl in \cite{SP1}, but also Lacey in
\cite{ML} and authors in \cite{LPPW} to analyze the iterated
commutators. Since all estimates are independent on the dyadic
grid, through out this paper we only deal with the dyadic shift
operator $S$ associated to the standard dyadic grid
$\mathcal{D}\,.$ For a single shift operator the hypothesis
required on $b$ and $w$ are that they belong to dyadic $BMO^{\,d}$
and $A^d_2$ with respect to the underlying dyadic grid defining
the operator. However since ultimately we want to average over all
grids, we will need $b$ and $w$ belonging to $BMO^{\,d}$ and
$A^d_2$ for all shifted and scaled dyadic grids, that we will have
if $b\in BMO$ and $w\in A_2\,,$ non-dyadic $BMO$ and $A_2\,.$
Beznosova has proved linear bounds for $\pi_b$ and
$\pi^{\ast}_b\,,$ \cite{OB}, together with Petermichl's \cite{SP2}
linear bounds for $S\,,$ this immediately provides the quadratic
bounds for $[\pi_b,S]$ and $[\pi_b^{\ast},S]$. Theorem \ref{Mr:t1}
will be proved once we show the quadratic estimate holds for
$[\lambda_b,S]$. We can actually obtain a better linear estimate
as in Theorem \ref{DS:mc}. Some terms in (\ref{Ma:sp}) do also
obey linear bounds.

\begin{theorem}\label{Lb:S1}
There exists $C$ such that
$$\|\pi^{\ast}_bS\|_{L^2(w)}+\|S\pi_b\|_{L^2(w)}\leq C[w]_{A^d_2}\|b\|_{BMO^d}\,.$$ for all $b\in BMO^d\,.$
\end{theorem}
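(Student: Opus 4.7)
The plan is to prove both estimates by a common bilinear/Carleson-embedding strategy that refines Beznosova's linear bound for $\pi_b$. The key is to avoid composing the known linear bounds of $S$ and $\pi_b$ separately, which would only yield a quadratic dependence on $[w]_{A_2^d}$; instead I would treat each of the compound operators as a single paraproduct-type object and extract one factor of $[w]_{A_2^d}$ from a single Cauchy--Schwarz step.

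First I would expand both operators in the Haar basis. The identity $S h_J = h_{J_-}-h_{J_+}$ gives
\begin{equation*}
S\pi_b f \;=\; \sum_{J\in\mathcal D}\langle b,h_J\rangle \langle f\rangle_J (h_{J_-}-h_{J_+}),
\end{equation*}
while $\langle Sf,h_J\rangle = \epsilon_J\langle f,h_{\hat J}\rangle$ (with $\hat J$ the dyadic parent of $J$ and $\epsilon_J=\pm 1$ recording whether $J$ is the left or right child of $\hat J$) gives
\begin{equation*}
\pi_b^{\ast} Sf(x) \;=\; \sum_{J\in\mathcal D}\epsilon_J\langle b,h_J\rangle\langle f,h_{\hat J}\rangle\,\frac{\mathbf 1_J(x)}{|J|}.
\end{equation*}
Pairing against a test function $g\in L^2(w^{-1})$ produces the two bilinear forms
\begin{equation*}
\langle S\pi_b f,g\rangle = \sum_J \langle b,h_J\rangle \langle f\rangle_J\bigl(\langle g,h_{J_-}\rangle-\langle g,h_{J_+}\rangle\bigr), \qquad \langle \pi_b^{\ast}Sf,g\rangle = \sum_J \epsilon_J \langle b,h_J\rangle \langle f,h_{\hat J}\rangle \langle g\rangle_J,
\end{equation*}
both of which are sums over $J$ with coefficients $\langle b,h_J\rangle$ paired against products of an $f$-factor and a $g$-factor.

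Second, I would exploit that $\{|\langle b,h_J\rangle|^2\}_J$ is a Carleson sequence of constant at most $\|b\|_{BMO^{\,d}}^{\,2}$ and apply a weighted bilinear Carleson embedding theorem in the form Beznosova uses for $\pi_b$ (via her ``$\alpha$-lemma''). A Cauchy--Schwarz splits each bilinear sum into two square-root factors, one involving $f$ paired with the weight $w$ and the other involving $g$ paired with $w^{-1}$; each factor is controlled by a single weighted Carleson embedding, contributing a power $[w]_{A_2^d}^{1/2}$, and the two powers combine to the desired single factor of $[w]_{A_2^d}$.

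The main obstacle is the scale mismatch produced by $S$: in $S\pi_b f$ the Haar coefficients of the output live on the children of $J$, while in $\pi_b^{\ast}Sf$ the Haar coefficients of $f$ live on the parent of $J$. The sums must be reindexed so that the weighted Carleson embedding is applied at a single consistent scale; each such reindexing costs only a harmless factor of two (every dyadic interval has exactly two children), and the sign factors $\epsilon_J$ disappear inside the absolute values. Once the alignment is in place the argument mirrors Beznosova's for $\pi_b$ alone and delivers the linear $[w]_{A_2^d}$ bound claimed in \eqref{DS:mt}.
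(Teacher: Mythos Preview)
Your overall strategy---expand in the Haar basis, split with Cauchy--Schwarz, and feed the pieces into weighted Carleson/bilinear embedding theorems of the Beznosova type---is exactly the route the paper takes, and for $\pi_b^{\ast}S$ it goes through essentially as you describe (Section~6 uses only the standard lemmas \ref{icl1}--\ref{icl3} and Theorem~\ref{WSB}). The gap is in your treatment of $S\pi_b$.

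The claim that ``each such reindexing costs only a harmless factor of two'' is where the argument breaks. After you reindex $S\pi_b$ so that the Haar coefficient of $g$ sits at scale $I$, the weight-oscillation factor coming from the $A^w$--part of the decomposition lives at the \emph{children} $I_{\pm}$, not at $I$ itself. Concretely, one of the three bilinear-embedding hypotheses you must verify becomes
\[
\frac{1}{|J|}\sum_{I\in\mathcal D(J)}|\langle b,h_I\rangle|\,\langle w^{-1}\rangle_I\bigl(|\langle h_{I_-},w\rangle|+|\langle h_{I_+},w\rangle|\bigr)\;\le\;C[w]_{A_2^d}\|b\|_{BMO^d},
\]
and after Cauchy--Schwarz this forces you to control
\[
\frac{1}{|J|}\sum_{I\in\mathcal D(J)}|I|\,\langle w\rangle_I\langle w^{-1}\rangle_I\Bigl(\frac{|\Delta_{I_+}w|+|\Delta_{I_-}w|}{\langle w\rangle_I}\Bigr)^{2}\;\le\;C[w]_{A_2^d}.
\]
This is \emph{not} Beznosova's $\alpha$-lemma (Lemma~\ref{icl3}), which only handles $(\Delta_I w/\langle w\rangle_I)^2$; here the numerator involves the oscillation of $w$ at the grandchildren of $I$ while the denominator stays at $I$, and you cannot pass from $\langle w\rangle_{I_\pm}$ to $\langle w\rangle_I$ in the right direction by a doubling argument. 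The paper isolates this as a separate result (Lemma~\ref{wls}) and devotes all of Section~8 to a new Bellman-function construction---built from $B(u,v)=\sqrt[4]{uv}$ but with an auxiliary variable tracking $\Delta u$---to prove it. Without that lemma, or an equivalent substitute, your plan for $S\pi_b$ does not close; the scale mismatch is a genuine obstacle, not a bookkeeping issue.
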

Note the three operators $[\lambda_b,S]\,,$ $\pi^{\ast}_bS$ and
$S\pi_b$ are generalized Haar shift operators for which there are
now two different proofs of linear bounds on $L^2(w)$ with respect
to $[w]_{A^d_2}\,,$ \cite{LPS} and \cite{CMP}, and in this paper
we present a third proof. We are now ready to explain the
organization of this paper. In Section 2 we will introduce
notation and discuss some useful results about weighted Haar
systems. In Section 3 we will start our discussion on how to find
the linear bound for the term $[\lambda_b,S]\,,$ most of which
will be very similar to calculations performed in \cite{SP2}. In
Section 4 we will introduce a number of Lemmas and Theorems that
will be used. In Section 5 we will finish the linear estimate for
the term $[\lambda_b,S]\,,$ and prove Theorem \ref{Mr:t1}. In
Section 6 we prove the linear bound for $\pi_b^*S\,.$ In Section 7
we reduce the proof of the linear bound for $S\pi_b$ to verifying
three embedding conditions, two are proved in this section, the
third is proved in Section 8 using a Bellman function argument. In
Section 9 we present the $L^p(w)$ estimate of the commutator with
a Haar shift operator, Theorem \ref{ExCo}. Finally, in Section 10
we provide the sharpness for the commutators of Hilbert, Riesz
transforms and Beurling-Ahlfors operators.

\section{Preliminaries and Notation}
Let us now introduce the notation which will be used frequently through this paper. Even though the $A_p$ conditions have already been introduced in (\ref{In:Ap}), we will state the special case of this condition when $p=2\,,$ namely $A^d_2$  since we will refer repeatedly to this. We say $w$ belongs to $A^d_2$ class, if
\begin{equation}\label{In:A2}[w]_{A^d_2}:=\sup_{I\in\mathcal{D}}\langle w\rangle_I\langle w^{-1}\rangle_I<\infty\,.\end{equation}
Here we take the supremum over all dyadic interval in
$\mathbb{R}\,.$ Note that if $w\in A_2$ then $w\in A_2^d$ and
$[w]_{A^d_2}\leq [w]_{A_2}\,.$ Intervals of the form
$[k2^{-j},(k+1)2^{-j})$ for integers $j,k$ are called dyadic
intervals. Again, let us denote $\mathcal{D}$ the collection of
all dyadic intervals, and let us denote $\mathcal{D}(J)$ the
collection of all dyadic subintervals of $J\,.$ For any interval
$I\in\mathcal{D}$, there is a Haar function defined by
\begin{equation*}h_I(x)=\frac{1}{|I|^{1/2}}(\chi_{I_+}(x)-\chi_{I_-}(x))\,,\end{equation*}
where $\chi_I$ denotes the characteristic function of the interval
$I\,,$ $\chi_I(x)=1$ if $x\in I\,,$ $\chi_I(x)=0$ otherwise. It is
a well known fact that the Haar system $\{h_I\}_{I\in\mathcal{D}}$
is an orthonormal system in $L^2_{\mathbb{R}}\,.$ We also consider
the different grids of dyadic intervals parametrized by
$\alpha,\,r,$ defined by
\begin{equation*}\mathcal{D}^{\,\alpha, r}=\{\alpha+rI:
I\in\mathcal{D}\}\,,\end{equation*} for $\alpha\in\R$ and positive
$r\,.$ For each grid $\mathcal{D}^{\,\alpha,r}$ of dyadic
intervals, there are corresponding Haar functions
$h_I^{\alpha,r}\,,$ $I\in \mathcal{D}^{\,\alpha,r}$ that are an
orthonormal system in $L^2_{\mathbb{R}}\,.$ Let us introduce a
proper orthonormal system for $L^2_{\mathbb{R}}(w)$ defined by
\begin{equation*}h^w_I:=\frac{1}{w(I)^{1/2}}\bigg[\frac{w(I_-)^{1/2}}{w(I_+)^{1/2}}\chi_{I_+}-\frac{w(I_+)^{1/2}}{w(I_-)^{1/2}}\chi_{I_-}\bigg]\,,\end{equation*}
where $w(I)=\int_Iw\,.$ We define the weighted inner product by
$\langle f,g\rangle_w=\int_{\mathbb{R}}fgw\,.$ Then, every
function $f\in L^{2}(w)$ can be written as
$$f=\sum_{I\in\mathcal{D}}\langle f,h^w_I\rangle_wh^w_{I}\,,$$ where the
sum converges a.e. in $L^2(w)\,.$ Moreover,
\begin{equation}\label{In:l2n}\|f\|^2_{L^2(w)}=\sum_{I\in\mathcal{D}}|\langle
f,h^w_I\rangle_{w}|^2\,.\end{equation} Again $\mathcal{D}$ can be
replaced by $\mathcal{D}^{\,\alpha,r}$ and the corresponding
weighted Haar functions are an orthonormal system in $L^2(w)\,.$
For convenience we will observe basic properties of the
disbalanced Haar system. First observe that $\il h_K,h^w_I\ir_w$
could be non-zero only if $I\supseteq K$, moreover, for any
$I\supseteq K$,
 \begin{equation}\label{Pr:p1}|\il h_K,h^w_I\ir_w|\leq\il
w\ir_K^{1/2}\,.\end{equation} Here is the the calculation that
provides (\ref{Pr:p1}),
\begin{align*}
|\il h_K,h^w_I\ir_w|&=\bigg|\int\frac{1}{|K|^{1/2}w(I)^{1/2}}(\chi_{K_+}(x)-\chi_{K_-}(x))\bigg[\frac{w(I_-)^{1/2}}{w(I_+)^{1/2}}\chi_{I_+}(x)-\frac{w(I_+)^{1/2}}{w(I_-)^{1/2}}\chi_{I_-}(x)\bigg]w(x)dx\bigg|\\
&\leq\frac{1}{|K|^{1/2}w(I)^{1/2}}\underbrace{\int_K\bigg|\frac{w(I_-)^{1/2}}{w(I_+)^{1/2}}\chi_{I_+}(x)+\frac{w(I_+)^{1/2}}{w(I_-)^{1/2}}\chi_{I_-}(x)\bigg|w(x)dx}_{A}\,.
\end{align*}
If $K\subset I_+$, then $A\leq w(I_-)^{1/2}w(I_+)^{-1/2}w(K)$.
Thus $$|\il h_K,h^w_I\ir_w|\leq \il
w\ir_K^{1/2}\sqrt{\frac{w(K)w(I_-)}{w(I_+)w(I)}}\leq \il
w\ir_K^{1/2}\,.$$ Similarly, if $K\subset I_-\,.$ If $K=I$, then
$A\leq 2\,w(K_-)^{1/2}w(K_+)^{1/2}\,.$ Thus
$$|\il h_K,h^w_I\ir_w|=|\il h_K,h^w_K\ir_w|\leq \il w\ir_K^{1/2}\,\frac{2\sqrt{w(K_-)w(K_+)}}{w(K)}\leq\il w\ir_K^{1/2}\,.$$
Estimate (\ref{Pr:p1}) implies that $|\il
h_{\hat{J}},h_{\hat{J}}^{w^{-1}}\ir_{w^{-1}}\il
h_J,h_J^w\ir_w|\leq \sqrt{2}[w]_{A_2^d}^{1/2}\,,$ where $\hat{J}$
is the parent of $J\,,$
\begin{align}
|\il h_{\hat{J}},h_{\hat{J}}^{w^{-1}}\ir_{w^{-1}}\il h_J,h_J^w\ir_w|&\leq \il w^{-1}\ir_{\hat{J}}^{1/2}\il w\ir_{J}^{1/2}=\il w^{-1}\ir_{\hat{J}}^{1/2}\bigg(\frac{1}{|J|}\int_Jw\bigg)^{1/2}\nonumber\\
&=\il w^{-1}\ir_{\hat{J}}^{1/2}\bigg(\frac{2}{|\hat{J}|}\int_Jw\bigg)^{1/2}\leq\sqrt{2}\il w^{-1}\ir_{\hat{J}}^{1/2}\il w\ir_{\hat{J}}^{1/2}\nonumber\\
&\leq \sqrt{2}\,[w]_{A^d_2}^{1/2}\,.\label{Pr:hh}
\end{align}
Also, one can deduce similarly the following estimate
\begin{equation}|\il h_{J},h_{J}^{w^{-1}}\ir_{w^{-1}}\il
h_{J_-},h_{J}^w\ir_w|\leq
\sqrt{2}[w]_{A_2^d}^{1/2}\,.\label{Pr:p7}\end{equation} For
$I\supsetneq J$, $h^w_I$ is constant on $J$. We will denote this
constant by $h^w_I(J)\,.$ Then $h^w_{\hat{J}}(J)$ is the constant
value of $h^w_{\hat{J}}$ on $J$ and $|h^w_{\hat{J}}(J)|\leq
w(J)^{-1/2}\,,$ as can be seen by the following estimate,
\begin{equation}\label{Pr:p9}|h^w_{\hat{J}}(J)|=\left\{\begin{array}{l}
                                          w(\hat{J}_-)^{1/2}/w(\hat{J})^{1/2}w(\hat{J}_+)^{1/2}\leq w(J)^{-1/2}~~\textrm{if}~~J=\hat{J}_+\\
                                          w(\hat{J}_+)^{1/2}/w(\hat{J})^{1/2}w(\hat{J}_-)^{1/2}\leq
                                          w(J)^{-1/2}~~\textrm{if}~~J=\hat{J}_-\,.
                           \end{array}\right.\end{equation}

Let us define the weighted averages, $\langle
g\rangle_{J,w}:=w(J)^{-1}\int_Jg(x)w(x)dx\,.$ As with the standard
Haar system, we can write the weighted averages
\begin{equation}\il g\ir_{J,w}=\sum_{I\in\mathcal{D}:I\supsetneq
J}\il g,h^w_I\ir_wh^w_I(J)\,.\label{Pr:e4}\end{equation} In fact,
here is the derivation of (\ref{Pr:e4}).
\begin{align*}
\il g\ir_{J,w}&=\frac{1}{w(J)}\int_J\sum_{I\in\mathcal{D}}\il
g,h^w_I\ir_wh^w_I(x)w(x)dx=\frac{1}{w(J)}\int_J\sum_{I\in\mathcal{D}}\il
g,h^w_I\ir_wh^w_I(J)w(x)dx\nonumber\\
&=\frac{1}{w(J)}\int_Jw(x)dx\sum_{I\in\mathcal{D}:I\supsetneq J}\il
g,h^w_I\ir_wh^w_I(J)=\sum_{I\in\mathcal{D}:I\supsetneq J}\il
g,h^w_I\ir_wh^w_I(J)\,.
\end{align*} Also, we will be using system of functions $\{H^w_I\}_{I\in\mathcal{D}}$ defined by
\begin{equation}
H^w_I=h_I\sqrt{|I|}-A^w_I\chi_I~~\textrm{where}~~A^w_I=\frac{\langle w\rangle_{I_+}-\langle w\rangle_{I_-}}{2\langle w\rangle_I}\,.\label{Pr:HA}\end{equation}
Then, $\{w^{1/2}H^w_I\}$ is orthogonal in $L^2$ with norms satisfying the inequality $\|w^{1/2}H^w_I\|_{L^2}\leq \sqrt{|I|\langle w\rangle_{I}}\,,$ refer to \cite{OB}.
Moreover, by Bessel's inequality we have, for all $g\in L^2\,,$ \begin{equation}\sum_{I\in \mathcal{D}}\frac{1}{|\,I|\langle w\rangle_I}\langle g,w^{1/2} H^w_I\rangle ^2\leq\|g\|^2_{L^2}\,.\label{In:e4}\end{equation}

\section{Linear bound for $[\lambda_b,S]$ part I}
In general, when we analyse commutator operators, a subtle
cancelation delivers the result one wants to find. In the analysis
of the commutator $[b,S]$, the part $[\lambda_b,S]$ will allow for
certain cancelation. First, let us rewrite $[\lambda_b,S]\,.$
\begin{align*}
[\lambda_b,S](f)&=\lambda_b(Sf)-S(\lambda_bf)\\
&=\sum_{I\in\mathcal{D}}\langle b\rangle_I\langle Sf,h_I\rangle h_I-\sum_{J\in\mathcal{D}}\langle \lambda_b f,h_J\rangle(h_{J_-}-h_{J_+})\\
&= \sum_{I\in\mathcal{D}}\sum_{J\in\mathcal{D}}\langle b\rangle_I\langle f,h_J\rangle\langle h_{J_-}-h_{J_+},h_I\rangle h_I-\sum_{J\in\mathcal{D}}\sum_{I\in\mathcal{D}}\langle b\rangle_I \langle f,h_I\rangle\langle h_I,h_J\rangle(h_{J_-}-h_{J_+})\\
&=\sum_{J\in\mathcal{D}}\langle b\rangle_{J_-}\langle f,h_J\rangle h_{J_-}-\sum_{J\in\mathcal{D}}\langle b\rangle_{J_+}\langle f,h_J\rangle h_{J_+}-\sum_{J\in\mathcal{D}}\frac{\langle b\rangle_{J_+}+\langle b\rangle_{J_-}}{2}\langle f,h_J\rangle(h_{J_-}-h_{J_+})\\
&=\sum_{J\in\mathcal{D}}\frac{\langle b\rangle_{J_-}-\langle b\rangle_{J_+}}{2}\langle f,h_J\rangle h_{J_-}-\sum_{J\in\mathcal{D}}\frac{\langle b\rangle_{J_+}-\langle b\rangle_{J_-}}{2}\langle f,h_J\rangle h_{J_+}\\
&=-\sum_{J\in\mathcal{D}}\Delta _J b\langle f,h_J\rangle(h_{J_+}+h_{J_-})\,,
\end{align*}
recall the notation $\Delta_J\,b=(\langle b\rangle_{J_+}-\langle
b\rangle_{J_-})/2\,.$ To find the $L^2(w)$ operator norm of
$[\lambda_b,S]$, it is enough to deal with the operator
$$S_b(f)=\sum_{I\in\mathcal{D}}\Delta_I\,b\langle f,h_I\rangle
h_{I_-}\,.$$ We shall state the weighted operator norm of $S_b$ as
a Theorem and give a detailed proof. Theorem \ref{DS:mc} is a
direct consequence of the following Theorem. We will prove the
following Theorem by the technique used in \cite{SP2}.
\begin{theorem}\label{DS:ma}
There exists a constant $C>0$, such that \begin{equation}\label{DS:sh}\|S_b\|_{L^2(w)\ra L^2(w)}\leq C[w]_{A^d_2}\|b\|_{BMO^{\,d}}\end{equation}
for all $b\in BMO^{\,d}$ and $w\in A^d_2$ for all $f\in L^2(w).$
\end{theorem}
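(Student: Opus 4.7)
The plan is to establish the bilinear estimate
\begin{equation*}
|\langle S_b f, g\rangle| \leq C\,[w]_{A_2^d}\,\|b\|_{BMO^d}\,\|f\|_{L^2(w)}\,\|g\|_{L^2(w^{-1})},
\end{equation*}
which by the duality $(L^2(w))^{*} = L^2(w^{-1})$ (with respect to the Lebesgue pairing) is equivalent to (\ref{DS:sh}). Starting from the identity
\begin{equation*}
\langle S_b f, g\rangle = \sum_{J\in\mathcal{D}} \Delta_J b\,\langle f, h_J\rangle\,\langle g, h_{J_-}\rangle,
\end{equation*}
I would expand $f$ and $g$ in the orthonormal weighted Haar bases, writing $f = \sum_K \alpha_K h_K^w$ in $L^2(w)$ and $g = \sum_L \beta_L h_L^{w^{-1}}$ in $L^2(w^{-1})$, with $\alpha_K := \langle f, h_K^w\rangle_w$ and $\beta_L := \langle g, h_L^{w^{-1}}\rangle_{w^{-1}}$. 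By (\ref{In:l2n}), the norms of $f$ and $g$ are exactly $\|(\alpha_K)\|_{\ell^2}$ and $\|(\beta_L)\|_{\ell^2}$.

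Next, the crucial structural observation is that $\langle h_K^w, h_J\rangle$ vanishes whenever $K \supsetneq J$ (since then $h_K^w$ is constant on $J$ while $h_J$ has mean zero) and whenever $K \cap J = \emptyset$. This confines the expansion of $\langle f, h_J\rangle$ to a sum over $K \subseteq J$, and similarly $\langle g, h_{J_-}\rangle$ to $L \subseteq J_-$. The resulting multiple sum naturally splits into four pieces according to whether $K=J$ or $K \subsetneq J$ and whether $L=J_-$ or $L \subsetneq J_-$. For the diagonal piece ($K=J,\,L=J_-$) the relevant inner products are handled by variants of (\ref{Pr:hh}) and (\ref{Pr:p7}), which together supply two factors of $[w]_{A_2^d}^{1/2}$ that pair to give the full linear $[w]_{A_2^d}$-growth. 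For the off-diagonal pieces, the fact that $h_K^w$ is constant on any strict dyadic subsuperset (with constant bounded as in (\ref{Pr:p9})) allows the inner sums to be repackaged via (\ref{Pr:e4}) as weighted averages $\langle f\rangle_{J,w}$ and $\langle g\rangle_{J_-,w^{-1}}$, and controlled through (\ref{Pr:p1}).

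After a careful Cauchy--Schwarz that decouples the $\alpha$- and $\beta$-variables, the estimate reduces to a weighted bilinear Carleson embedding of the shape
\begin{equation*}
\sum_{J\in\mathcal{D}} |\Delta_J b|^2\,\omega_J\,\xi_J\,\eta_J \leq C\,[w]_{A_2^d}\,\|b\|_{BMO^d}^{2}\,\|\xi\|_{\ell^2}\,\|\eta\|_{\ell^2},
\end{equation*}
where $\omega_J$ absorbs ratios of $\langle w\rangle_J$ and $\langle w^{-1}\rangle_J$ surviving from the previous step. The unweighted Carleson condition $\sum_{J\subseteq I} |\Delta_J b|^2 |J| \leq \|b\|_{BMO^d}^2 |I|$ is classical, but the hard part is upgrading it so that the $A_2^d$-characteristic enters \emph{linearly} rather than quadratically. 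This is the main obstacle, and it is where Bellman function ideas must be invoked: one constructs a supersolution that simultaneously encodes the $BMO^d$-data on $b$ and the $A_2^d$-data on $w$, exploiting the auxiliary system $\{H_I^w\}$ from (\ref{Pr:HA}) together with the Bessel-type inequality (\ref{In:e4}), in the spirit of Petermichl's treatment of the shift operator in \cite{SP2} and Beznosova's ``little lemma'' for the paraproduct in \cite{OB}. Producing this supersolution and deducing the required embedding is what the subsequent sections are devoted to, and constitutes the crux of Theorem \ref{DS:ma}.
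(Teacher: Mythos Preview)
Your bilinear setup and the idea of expanding in weighted Haar bases are correct and match the paper's strategy, but two points go wrong in the execution.

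First, the off-diagonal repackaging is oriented the wrong way. You correctly observe that the Lebesgue pairing $\langle h_K^w,h_J\rangle$ vanishes for $K\supsetneq J$, leaving $K\subseteq J$. But for $K\subsetneq J$ it is $h_J$ that is constant on $K$, not $h_K^w$ on $J$; the resulting term $h_J(K)\int h_K^w$ does \emph{not} assemble into a weighted average via (\ref{Pr:e4}), since (\ref{Pr:e4}) requires summing $\langle f,h_I^w\rangle_w\,h_I^w(J)$ over $I\supsetneq J$. The paper avoids this by working with the \emph{weighted} pairings $\langle h_L,h_I^{w^{-1}}\rangle_{w^{-1}}$ and $\langle h_{L_-},h_J^{w}\rangle_{w}$ arising from $S_{b,w^{-1}}f:=S_b(w^{-1}f)$: those vanish unless $I\supseteq L$ and $J\supseteq L_-$, and now the off-diagonal sum over $I\supsetneq \hat J$ genuinely collapses to $\langle f\rangle_{\hat J,w^{-1}}$ via (\ref{Pr:e4}). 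The four-way split in the paper is accordingly indexed by the relation between the weighted-Haar indices $I$ and $J$ (the cases $I=J$, $I=\hat J$, $\hat J\subsetneq I$, $I\subsetneq J$), not by your $(K,L)$ versus $(J,J_-)$.

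Second, the decisive input you are missing is not a new bilinear Bellman embedding but the truncated test-function estimate
\[
\|S^{I}_{b,w^{-1}}\chi_I\|_{L^2(w)}\le c\,\|b\|_{BMO^d}\,[w]_{A_2^d}\,w^{-1}(I)^{1/2}
\]
(Lemma~\ref{TS}). After the off-diagonal sum collapses to $\sum_J \langle f\rangle_{\hat J,w^{-1}}^2\,\langle S^{I}_{b,w^{-1}}\chi_I,h_J^w\rangle_w^2$, the Weighted Carleson Embedding Theorem~\ref{CIT} applies, and Lemma~\ref{TS} is exactly what verifies the Carleson packing condition with the linear power of $[w]_{A_2^d}$. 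No fresh Bellman supersolution is constructed in the proof of Theorem~\ref{DS:ma}; the Bellman work is encapsulated in Lemma~\ref{TS} (imported from \cite{SP2}). Your sketch of ``the subsequent sections'' building such a supersolution describes what happens for $S\pi_b$ in Sections~7--8, not for $S_b$.
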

One can easily check that choosing $b=|I|^{1/2}h_I\,,$ yields
$\|S\|_{L^2(w)\ra L^2(w)}\leq C[w]_{A^d_2}\,.$ Inequality (\ref{DS:sh}) is equivalent to the following inequality for any
positive function $f,g$
\begin{equation}\label{DS:shd}|\il S_{b,w^{-1}} f,g\ir_w|\leq C[w]_{A^d_2}\|b\|_{BMO^{\,d}}\|f\|_{L^2(w^{-1})}\|g\|_{L^2(w)}\,,\end{equation}
where $S_{b,w^{-1}}(f)=S_b(w^{-1}f)\,.$ Expanding $f$ and $g$ in
the disbalanced Haar systems respectively for $L^2(w^{-1})$ and
$L^2(w)$ yields for (\ref{DS:shd}),
\begin{align}
|\il S_{b,w^{-1}}f,g\ir_w|&=\bigg|\int S_{b,w^{-1}}\big(\sum_{I\in\mathcal{D}}\il f,h_I^{w^{-1}}\ir_{w^{-1}}h^{w^{-1}}_I\big)\big(\sum_{J\in\mathcal{D}}\il g,h^w_J\ir_wh^w_J\big)wdx\bigg|\nonumber\\
&=\bigg|\sum_{I\in\mathcal{D}}\sum_{J\in\mathcal{D}}\il f,h_I^{w^{-1}}\ir_{w^{-1}}\il g,h^w_J\ir_w\int h^w_J S_{b,w^{-1}}(h^{w^{-1}}_I)wdx\bigg|\nonumber\\
&=\bigg|\sum_{I\in\mathcal{D}}\sum_{J\in\mathcal{D}}\il f,h_I^{w^{-1}}\ir_{w^{-1}}\il g,h^w_J\ir_w\il
S_{b,w^{-1}}h^{w^{-1}}_I,h^w_J\ir_w\bigg|\,.\label{DS:esh}
\end{align}
In (\ref{DS:esh}),
$$\il S_{b,w^{-1}}h^{w^{-1}}_I,h^w_J\ir_w=\Big\il\sum_{L\in\mathcal{D}}\Delta_Lb\il
h_L,w^{-1}h^{w^{-1}}_I\ir\,
h_{L^-},h^w_J\Big\ir_w=\sum_{L\in\mathcal{D}}\Delta_Lb \il
h_L,h^{w^{-1}}_I\ir_{w^{-1}}\il h_{L^-},h_J^w\ir_w$$ Since $\il
h_L,h^{w^{-1}}_I\ir_{w^{-1}}\neq 0,$ only when $L\subseteq I$ and
$\il h_{L^-},h_J^w\ir_w\neq 0$ only when $L_-\subseteq J\,,$ then
we have non-zero terms if $I\subseteq J$ or $\hat{J}\subseteq I\,$
in the sum of (\ref{DS:esh}). Thus we can split the sum into four
parts, $\sum_{I=J}\,,~\sum_{I=\hat{J}}\,,~\sum_{\hat{J}\subsetneq
I}\,,$ and $\sum_{I\subsetneq J}\,.$ Let us now introduce the
truncated shift operator $$
S^{I}_{\,b}(f):=\sum_{L\in\mathcal{D}(I)}\Delta_Lb\il f, h_L\ir
h_{L_-}\,,$$ and its composition with multiplication by $w^{-1}$,
$$
S^{I}_{\,b,w^{-1}}(f):=\sum_{L\in\mathcal{D}(I)}\Delta_Lb\langle
w^{-1}f,h_L\rangle h_{L_-}\,.$$ We will see that the weighted norm
$\| S^{I}_{\,b,w^{-1}}\chi_I\|_{L^2(w)}$, plays
 a main role in our estimate for $\il S_{\,b,w^{-1}}h^{w^{-1}}_I,h^w_J\ir_w\,.$

\section{Theorems and Lemmas} To prove inequality (\ref{DS:shd}) we
need several theorems and lemmas. One can find the proof in the
indicated references. First we recall some embedding theorems.
Another main result in \cite{SP2} is a two-weighted bilinear
embedding theorem, which was proved by a Bellman function
argument.
\begin{theorem}[Petermichl's Bilinear Embedding Theorem]\label{BIT}  Let $w$ and $v$ be weights so that $\il w\ir_I\il v\ir_I\leq Q$ for all intervals $I$ and let $\{\alpha_I\}$ be a non-negative sequence so that the three estimates below hold for all $J$
\begin{equation}\label{BIT1}\sum_{I\in\mathcal{D}(J)}\frac{\alpha_I}{\il w\ir_I}\leq Q\,v(J)\end{equation}
\begin{equation}\label{BIT2}\sum_{I\in\mathcal{D}(J)}\frac{\alpha_I}{\il v\ir_I}\leq Q\,w(J)\end{equation}
\begin{equation}\label{BIT3}\sum_{I\in\mathcal{D}(J)}\alpha_I\leq Q\,|J|\,.\end{equation}
Then there is $c$ such that for all $f\in L^2(w)$ and $g\in L^2(v)$ $$\sum_{I\in\mathcal{D}}\alpha_I\il f\ir_{I,w}\il g\ir_{I,v}\leq cQ\|f\|_{L^2(w)}\|g\|_{L^2(v)}\,.$$
\end{theorem}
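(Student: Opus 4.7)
I would prove this by the Bellman function method, following Nazarov--Treil--Volberg as adapted by Petermichl in \cite{SP2}. The plan is to construct a single concave function on a low-dimensional convex domain whose concavity defect produces a bilinear Haar-increment estimate; the three Carleson hypotheses are then used separately to translate this into the stated bound for weighted averages.

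By monotone convergence it suffices to bound the sum restricted to $\mathcal{D}(J_0)$ for an arbitrary dyadic interval $J_0$, with a constant uniform in $J_0$, and one may replace $f,g$ by $f\chi_{J_0}, g\chi_{J_0}$. To each $I \in \mathcal{D}(J_0)$ I would attach the state
$$P(I) = \bigl(\langle f\rangle_{I,w},\ \langle g\rangle_{I,v},\ \langle f^2\rangle_{I,w},\ \langle g^2\rangle_{I,v},\ \langle w\rangle_I,\ \langle v\rangle_I\bigr) =: (x_1, x_2, X_1, X_2, r, s),$$
which by Cauchy--Schwarz and the hypothesis $rs \leq Q$ lies in the convex domain
$$\Omega_Q = \bigl\{(x_1,x_2,X_1,X_2,r,s) : x_i^2 \leq X_i,\ r,s > 0,\ rs \leq Q\bigr\}.$$

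Next I would construct $B \colon \Omega_Q \to [0,\infty)$ with size $B(P) \leq C\sqrt{X_1 X_2 \cdot rs}$ and a concavity estimate
$$B(P) - \tfrac{1}{2}\bigl(B(P_+) + B(P_-)\bigr) \geq c\,|\Delta x_1|\,|\Delta x_2|$$
for admissible splits $P_\pm \in \Omega_Q$ with midpoint $P$, where $\Delta x_i = (x_i^+ - x_i^-)/2$. A natural ansatz is
$$B = C\sqrt{X_1 X_2 \cdot rs} - c\sqrt{(X_1 - x_1^2)(X_2 - x_2^2)\cdot rs},$$
with the constants adjusted so that the Hessian on the interior of $\Omega_Q$ is negative semidefinite. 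Iterating the concavity inequality at each $I \in \mathcal{D}(J_0)$, weighting by $|I|$, and telescoping yields the Haar-increment bilinear bound
$$\sum_{I \in \mathcal{D}(J_0)} |I|\,|\Delta_I f_w|\,|\Delta_I g_v| \leq C|J_0|\,B(P(J_0)) \leq C\,\|f\|_{L^2(w)}\|g\|_{L^2(v)},$$
where $\Delta_I f_w := (\langle f\rangle_{I_+,w} - \langle f\rangle_{I_-,w})/2$ (and similarly for $g$), using the identity $|J_0|^2 X_1(J_0) X_2(J_0)\, r_0 s_0 = \|f\|^2_{L^2(w)}\|g\|^2_{L^2(v)}$.

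To pass from this Haar-increment bound to $\sum \alpha_I \langle f\rangle_{I,w}\langle g\rangle_{I,v}$, I would expand each weighted average using (\ref{Pr:e4}) as a telescoping sum of weighted Haar coefficients over dyadic ancestors, turning the target into a trilinear form indexed by triples of intervals. Splitting by the relative position of the three intervals and applying Cauchy--Schwarz, each piece is controlled by one of the three hypotheses: (\ref{BIT3}) together with the Bellman-function bound handles the diagonal piece, while (\ref{BIT1}) and (\ref{BIT2}) absorb the two off-diagonal pieces via standard weighted Carleson embeddings. The main obstacle is the construction of $B$: one must produce a function on the six-dimensional $\Omega_Q$ whose size is $\sqrt{X_1 X_2\cdot rs}$ and whose Hessian delivers the bilinear concavity defect $|\Delta x_1|\,|\Delta x_2|$, with the $rs$-factor calibrated carefully against the constraint $rs \leq Q$; this Hessian calculation is the technical core of the argument.
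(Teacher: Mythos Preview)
The paper does not prove this theorem: it is quoted from \cite{SP2} with the remark that it ``was proved by a Bellman function argument,'' so there is no in-paper proof to compare against. Your instinct to use a Bellman function is correct, but the specific architecture you propose differs from the one in \cite{SP2} and, as written, has a gap.

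In the approach of \cite{SP2} (in the tradition of \cite{NTV}) the Carleson sequence $\{\alpha_I\}$ is built \emph{into} the Bellman function. One augments the state $P(I)$ by the three ``box'' averages
\[
M_I=\frac{1}{|I|}\sum_{K\in\mathcal{D}(I)}\alpha_K,\qquad
N^w_I=\frac{1}{v(I)}\sum_{K\in\mathcal{D}(I)}\frac{\alpha_K}{\langle w\rangle_K},\qquad
N^v_I=\frac{1}{w(I)}\sum_{K\in\mathcal{D}(I)}\frac{\alpha_K}{\langle v\rangle_K},
\]
each bounded by $Q$ thanks to (\ref{BIT1})--(\ref{BIT3}). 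These satisfy the additive recursions $|I|M_I=|I_+|M_{I_+}+|I_-|M_{I_-}+\alpha_I$ (and analogues for $N^w,N^v$), so the Bellman function can be designed with a concavity defect at $I$ of size at least $\tfrac{\alpha_I}{|I|}\,x_1x_2$, which telescopes directly to $\sum_I\alpha_I\langle f\rangle_{I,w}\langle g\rangle_{I,v}\le cQ\|f\|_{L^2(w)}\|g\|_{L^2(v)}$ in a single step.

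Your plan instead proves an $\alpha$-free bound $\sum_I|I|\,|\Delta_If_w|\,|\Delta_Ig_v|\lesssim\|f\|\|g\|$ and then tries to recover the $\alpha$-weighted average sum by expanding $\langle f\rangle_{I,w}$ and $\langle g\rangle_{I,v}$ in weighted Haar series. The difficulty is that the resulting trilinear form contains factors like $h_K^w(I)\,h_K^v(I)$, whose size is $\bigl(w(K')v(K')\bigr)^{-1/2}$ for the child $K'\ni I$; the hypothesis $\langle w\rangle\langle v\rangle\le Q$ gives only an \emph{upper} bound on $w(K')v(K')$, hence no control on these factors from above. I do not see how (\ref{BIT3}) alone closes the diagonal piece, nor how your Haar-increment estimate (which knows nothing about $\alpha_I$) enters the bookkeeping at that stage. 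If you want to salvage a two-step argument, you would need a genuinely different splitting; the cleaner route, and the one actually used, is to absorb the three Carleson hypotheses as extra coordinates of the Bellman domain from the outset.
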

\begin{corollary}[Bilinear Embedding Theorem] Let $w$ and $v$ be weights. Let $\{\alpha_I\}$ be a sequence of nonnegative numbers such that for all dyadic intervals $J\in\mathcal{D}$ the following three inequalities hold with some constant $Q>0\,,$
\begin{equation}\label{BITb}\sum_{I\in\mathcal{D}(J)}\alpha_I\il v\ir_I\,|\,I|\leq Q\,v(J)\end{equation}
\begin{equation}\label{BITa}\sum_{I\in\mathcal{D}(J)}\alpha_I\il w\ir_I\,|\,I|\leq Q\,w(J)\end{equation}
\begin{equation}\label{BITc}\sum_{I\in\mathcal{D}(J)}\alpha_I\il w\ir_I\il v\ir_I\leq Q\,|J|\,.\end{equation}
Then for any two nonnegative function $f,\,g\in L^2$ $$\sum_{I\in\mathcal{D}}\alpha_I\langle fv^{1/2}\rangle_I\langle gw^{1/2}\rangle_I\,|\,I|\leq CQ\|f\|_{L^2}\|g\|_{L^2}$$ holds with some constant $C>0\,.$
\end{corollary}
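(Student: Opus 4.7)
The plan is to deduce this Corollary from Theorem 4.1 by a change of variables that normalizes the bilinear sum to the form handled by Petermichl. Specifically, I would set $\tilde f := fv^{-1/2}$, $\tilde g := gw^{-1/2}$, and $\tilde\alpha_I := \alpha_I\,|I|\,\il w\ir_I\il v\ir_I$. An easy check gives $\|\tilde f\|_{L^2(v)} = \|f\|_{L^2}$ and $\|\tilde g\|_{L^2(w)} = \|g\|_{L^2}$, and a direct computation of the weighted averages shows
\[
\il \tilde f\ir_{I,v} = \frac{1}{v(I)}\int_I fv^{1/2}\,dx = \frac{\il fv^{1/2}\ir_I}{\il v\ir_I},\qquad \il \tilde g\ir_{I,w} = \frac{\il gw^{1/2}\ir_I}{\il w\ir_I}.
\]
Multiplying these and inserting the definition of $\tilde\alpha_I$ yields the key identity
\[
\tilde\alpha_I\,\il \tilde f\ir_{I,v}\,\il \tilde g\ir_{I,w} = \alpha_I\,|I|\,\il fv^{1/2}\ir_I\,\il gw^{1/2}\ir_I,
\]
so the left-hand side of the Corollary is precisely $\sum_{I\in\mathcal{D}} \tilde\alpha_I\,\il \tilde f\ir_{I,v}\,\il \tilde g\ir_{I,w}$, which is the bilinear sum that Theorem 4.1 controls (with the weights $v,w$ playing the roles of $w,v$ there, by the symmetry of its hypotheses).

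The next step is to verify that $\tilde\alpha_I$ and the weights $v,w$ satisfy the three Carleson hypotheses of Theorem 4.1. The ratios are
\[
\frac{\tilde\alpha_I}{\il v\ir_I} = \alpha_I\,|I|\,\il w\ir_I,\qquad \frac{\tilde\alpha_I}{\il w\ir_I} = \alpha_I\,|I|\,\il v\ir_I,
\]
so conditions (4.1) and (4.2) of Theorem 4.1 become exactly hypotheses (BITa) and (BITb) of the Corollary, both with constant $Q$. The remaining hypothesis $\sum_{I\in\mathcal{D}(J)} \tilde\alpha_I \leq Q|J|$, together with the pointwise $A_2$-type bound $\il w\ir_I\il v\ir_I \leq Q$ needed to invoke Theorem 4.1, is delivered by (BITc).

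With all three hypotheses in hand, Theorem 4.1 produces
\[
\sum_{I\in\mathcal{D}} \tilde\alpha_I\,\il \tilde f\ir_{I,v}\,\il \tilde g\ir_{I,w} \leq cQ\,\|\tilde f\|_{L^2(v)}\|\tilde g\|_{L^2(w)} = cQ\,\|f\|_{L^2}\|g\|_{L^2},
\]
which is the conclusion of the Corollary. The main obstacle is the bookkeeping: one must identify the substitutions $\tilde f,\tilde g,\tilde\alpha_I$ that simultaneously normalize the $L^2$ norms on the right-hand side and convert each of the Corollary's three Carleson conditions into the corresponding hypothesis of Petermichl's theorem. Once this substitution is pinned down, the Corollary reduces to a rewriting of Theorem 4.1.
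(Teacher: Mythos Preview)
Your substitutions $\tilde f,\tilde g,\tilde\alpha_I$ are well chosen, and the verification that hypotheses (\ref{BIT1}) and (\ref{BIT2}) of Theorem~\ref{BIT} follow from (\ref{BITa}) and (\ref{BITb}) is correct. The gap is in the last step: you assert that (\ref{BITc}) ``delivers'' the pointwise bound $\il w\ir_I\il v\ir_I\le Q$ required in Theorem~\ref{BIT}, but this does not hold. Condition (\ref{BITc}) is a Carleson packing condition on the sequence $\{\alpha_I\il w\ir_I\il v\ir_I\}$; keeping only the single term $I=J$ yields $\alpha_J\il w\ir_J\il v\ir_J\le Q|J|$, which says nothing about $\il w\ir_J\il v\ir_J$ when $\alpha_J$ is small or zero. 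The Corollary, as stated, places no joint $A_2$-type control on $w,v$, and that standing hypothesis of Theorem~\ref{BIT} cannot be recovered from (\ref{BITb})--(\ref{BITc}). So the reduction to Theorem~\ref{BIT} is incomplete in this generality.

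There is also a bookkeeping mismatch: with (\ref{BITc}) read literally (no factor $|I|$), your requirement $\sum_{I\in\mathcal{D}(J)}\tilde\alpha_I\le Q|J|$ becomes $\sum_{I\in\mathcal{D}(J)}\alpha_I\,|I|\,\il w\ir_I\il v\ir_I\le Q|J|$, which is not (\ref{BITc}). Comparing with the paper's own application (condition (\ref{GB:e6}) in the proof of Lemma~\ref{gcs}) suggests (\ref{BITc}) should carry a factor $|I|$; with that correction (\ref{BIT3}) does follow from (\ref{BITc}), but the pointwise bound still does not. The paper itself gives no proof of the Corollary and refers to \cite{NTV}, where the result is obtained directly by a Bellman function argument that needs no $A_2$-type assumption on $w,v$. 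In every use within this paper one has $v=w^{-1}$ with $w\in A_2^d$, so the missing pointwise bound holds with $Q=[w]_{A_2^d}$ and your reduction then goes through; but for the Corollary in its stated generality you would need either to add $\il w\ir_I\il v\ir_I\le Q$ as a hypothesis or to invoke the argument of \cite{NTV}.
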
  Both Bilinear Embedding Theorems are key tools in our estimate. One version of such a theorem appeared in \cite{NTV}.
The original version of the next lemma also appeared in
\cite{SP2}. Using the fact that, for all $I\in\mathcal{D}\,,$
$|\Delta_Ib|\leq 2\|b\|_{BMO^d}\,,$ one can prove the following
lemma similarly to its original proof.
\begin{lemma}\label{TS}
There is a constant $c$ such that
$\|S^I_{b,w^{-1}}\chi_I\|_{L^2(w)}\leq
c\|b\|_{BMO^{\,d}}[w]_{A^d_2}w^{-1}(I)^{1/2}$ for all intervals $I\,$
and weights $w\in A^d_2\,.$
\end{lemma}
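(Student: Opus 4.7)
The approach follows the scheme of Petermichl in \cite{SP2} for the analogous estimate on the untwisted truncated shift, with the coefficients $\Delta_L b$ treated as uniformly bounded perturbations via $|\Delta_L b|\leq 2\|b\|_{BMO^{\,d}}$.

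First I would unwind the operator on $\chi_I$. Since $\langle w^{-1}\chi_I,h_L\rangle = |L|^{1/2}\Delta_L(w^{-1})$ for every $L\in\mathcal{D}(I)$, and since $h_{L_-}=\sqrt{2}|L|^{-1/2}(\chi_{(L_-)_+}-\chi_{(L_-)_-})$, one arrives at the pointwise identity
\begin{equation*}
S^I_{\,b,w^{-1}}\chi_I(x)\;=\;\sqrt{2}\sum_{L\in\mathcal{D}(I)}\Delta_L b\cdot\Delta_L(w^{-1})\cdot\bigl(\chi_{(L_-)_+}(x)-\chi_{(L_-)_-}(x)\bigr).
\end{equation*}

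Next I would dualise, writing $\|S^I_{\,b,w^{-1}}\chi_I\|_{L^2(w)}=\sup\langle S^I_{\,b,w^{-1}}\chi_I,g\rangle_w$ over $g$ with $\|g\|_{L^2(w)}=1$, and expand $g$ in the disbalanced weighted Haar basis $\{h^w_K\}_{K\in\mathcal{D}}$. Because $\langle h_{L_-},h^w_K\rangle_w$ vanishes unless $L_-\subseteq K$, and is controlled in the remaining cases by (\ref{Pr:p1}) and (\ref{Pr:p7}), the resulting double sum splits naturally according to whether $K=L_-$, $K=L$, or $K\supsetneq L$. Cauchy--Schwarz applied to the coefficients $\{\langle g,h^w_K\rangle_w\}$ then peels off the $\|g\|_{L^2(w)}$ factor and reduces the task to bounding a square function in $L$ whose coefficients are built from $\Delta_L b$, $\Delta_L(w^{-1})$, and factors of $\langle w\rangle_K$ inherited from (\ref{Pr:p1}).

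At that stage I would pull $|\Delta_L b|\leq 2\|b\|_{BMO^{\,d}}$ out uniformly, use the $A_2^d$ identity $\langle w\rangle_K\langle w^{-1}\rangle_K\leq [w]_{A^d_2}$, and reduce to a Carleson-type embedding on $\mathcal{D}(I)$ for the Haar data of $w^{-1}$ of the form $\sum_{L\in\mathcal{D}(I)}(\Delta_L(w^{-1}))^2 w(L)\leq c\,[w]_{A^d_2}w^{-1}(I)$, which is exactly the Bellman function embedding established in \cite{SP2}. The uniform $BMO^{\,d}$ bound comes along for free as the overall factor $\|b\|_{BMO^{\,d}}$. I expect the main obstacle to be organising the non-diagonal contributions with $K\supsetneq L_-$ into a telescoping sum that still feeds into the same Bellman embedding; this is the delicate part of Petermichl's argument, and once it is in place the uniform control on $\Delta_L b$ slots in with no further ideas required, producing the bound $c\,\|b\|_{BMO^{\,d}}[w]_{A^d_2}w^{-1}(I)^{1/2}$.
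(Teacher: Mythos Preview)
Your proposal is correct and matches the paper's approach exactly. The paper does not prove Lemma~\ref{TS} in detail; it simply remarks that the analogous estimate is in \cite{SP2} and that ``using the fact that, for all $I\in\mathcal{D}$, $|\Delta_Ib|\leq 2\|b\|_{BMO^d}$, one can prove the following lemma similarly to its original proof'' --- which is precisely the strategy you outline.
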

We will also need the Weighted Carleson Embedding Theorem from \cite{NTV}, and some other inequalities for weights.
\begin{theorem}[Weighted Carleson Embedding Theorem]\label{CIT}
 Let $\{\alpha_J\}$ be a non-negative sequence such that for all dyadic intervals $I$
$$\sum_{J\in\mathcal{D}(I)}\alpha_J\leq Qw^{-1}(I)\,.$$
Then for all $f\in L^2(w^{-1})$
$$\sum_{J\in\mathcal{D}}\alpha_J\il f\ir^2_{J,w^{-1}}\leq 4Q\|f\|^2_{L^2(w^{-1})}\,.$$
\end{theorem}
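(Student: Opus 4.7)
My plan is to reduce the weighted Carleson embedding to the classical dyadic Carleson embedding for the measure $d\mu := w^{-1}(x)\,dx$. With this identification one has $w^{-1}(I) = \mu(I)$ and $\langle f\rangle_{J,w^{-1}} = \mu(J)^{-1}\int_J f\,d\mu$, so the hypothesis becomes the clean packing condition $\sum_{J\in\mathcal{D}(I)}\alpha_J \leq Q\,\mu(I)$, and the target inequality reads $\sum_J\alpha_J\bigl(\mu(J)^{-1}\int_J f\,d\mu\bigr)^2 \leq 4Q\|f\|_{L^2(\mu)}^2$. Since $w$ is a weight in the sense defined at the beginning of the paper ($w>0$ a.e., locally integrable), $w^{-1}$ is positive a.e.\ and locally integrable, and $\mu$ is a well-behaved locally finite Borel measure on $\mathbb{R}$ adapted to the dyadic filtration.

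The main step is a distribution function argument. I would apply layer-cake to write
$$\sum_{J\in\mathcal{D}}\alpha_J\langle f\rangle_{J,w^{-1}}^2 \;=\; \int_0^\infty 2\lambda\Bigl(\sum_{J:\,\langle f\rangle_{J,w^{-1}}>\lambda}\alpha_J\Bigr)d\lambda,$$
and, for each fixed $\lambda>0$, select the maximal dyadic intervals $\{J_i^\lambda\}$ satisfying $\langle f\rangle_{J_i^\lambda,\mu}>\lambda$. These are pairwise disjoint, every $J$ appearing in the inner sum is contained in a unique $J_i^\lambda$, and their union coincides with $\{M^d_\mu f>\lambda\}$, where $M^d_\mu$ denotes the dyadic maximal function with respect to $\mu$. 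Applying the Carleson hypothesis to each $J_i^\lambda$ and adding yields
$$\sum_{J:\,\langle f\rangle_{J,w^{-1}}>\lambda}\alpha_J \;\leq\; \sum_i\sum_{J\in\mathcal{D}(J_i^\lambda)}\alpha_J \;\leq\; Q\sum_i\mu(J_i^\lambda) \;=\; Q\,\mu\bigl(\{M^d_\mu f>\lambda\}\bigr).$$

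Substituting back into the layer-cake integral gives
$$\sum_{J\in\mathcal{D}}\alpha_J\langle f\rangle_{J,w^{-1}}^2 \;\leq\; Q\int_0^\infty 2\lambda\,\mu\bigl(\{M^d_\mu f>\lambda\}\bigr)d\lambda \;=\; Q\,\|M^d_\mu f\|_{L^2(\mu)}^2.$$
I would then close the estimate by invoking Doob's $L^2$ maximal inequality for the dyadic martingale filtration on $(\mathbb{R},\mu)$, which gives $\|M^d_\mu f\|_{L^2(\mu)}\leq 2\|f\|_{L^2(\mu)}$; this yields the factor $4Q$ in the conclusion and returns the result to the original notation since $\|f\|_{L^2(\mu)} = \|f\|_{L^2(w^{-1})}$.

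The only subtle point is Doob's inequality in the form needed; this is standard martingale theory for the conditional expectations $E_n f = \sum_{J\in\mathcal{D}_n}\mu(J)^{-1}\bigl(\int_J f\,d\mu\bigr)\chi_J$, and follows from the trivial weak $(1,1)$ bound for the dyadic maximal operator on any measure (with constant $1$) together with the sharp truncation argument giving the operator norm $p/(p-1) = 2$ at $p=2$. Nothing here is specific to Lebesgue measure, so the classical proof transfers verbatim.
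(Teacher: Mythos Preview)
Your argument is correct and recovers the sharp constant $4Q$. The paper does not actually prove this theorem; it quotes it from Nazarov--Treil--Volberg \cite{NTV}, where it is established by a Bellman function argument rather than by the stopping-time/maximal-function route you take. Both approaches are standard and yield the same constant. Your method is the more elementary and transparent of the two, needing only the weak $(1,1)$ bound for the dyadic maximal operator with respect to an arbitrary measure and Doob's sharp $L^2$ inequality; the Bellman function approach, on the other hand, fits the overall style of the paper, where Lemmas~\ref{icl1}--\ref{icl3} and the bilinear embedding theorems are all obtained that way.

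Two small points worth tightening. First, the assertion that $w^{-1}$ is locally integrable does not follow merely from $w$ being a weight in the paper's sense (positive a.e.\ and locally integrable): take $w(x)=|x|$ near the origin. The statement of the theorem is only meaningful when $w^{-1}(I)<\infty$ for every dyadic $I$ (otherwise the weighted averages are undefined), and in every application in the paper one has $w\in A_2^d$, which forces this; you should state the hypothesis explicitly rather than claim it follows automatically. Second, your layer-cake identity and the selection of maximal intervals tacitly assume $f\geq 0$; either reduce to this case at the outset (replacing $f$ by $|f|$ only increases $\langle f\rangle_{J,w^{-1}}^2$) or write the level sets as $\{J:|\langle f\rangle_{J,w^{-1}}|>\lambda\}$ throughout.
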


\begin{theorem}[Wittwer's sharp version of Buckley's inequality]\label{WSB}
There exist a positive constant $C$ such that for any weight $w\in A^d_2$ and dyadic interval $I\in\mathcal{D}\,,$
$$\frac{1}{|J|}\sum_{I\in\mathcal{D}(J)}\frac{\big(\langle w\rangle_{I_+}-\langle w\rangle_{I_-}\big)^2}{\langle w\rangle_I}\,|\,I|\leq C[w]_{A^d_2}\langle w\rangle_J\,.$$
\end{theorem}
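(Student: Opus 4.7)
The plan is a Bellman function argument in the spirit of Wittwer's original proof (and the Bellman technology developed for paraproducts in \cite{OB} and for the dyadic shift in \cite{SP2}). Let $Q := [w]_{A_2^d}$ and, to each dyadic interval $I$, associate the point $(x_I, y_I) := (\langle w\rangle_I, \langle w^{-1}\rangle_I)$. By Cauchy--Schwarz and the $A_2^d$ hypothesis, this pair lies in the hyperbolic strip
$$\Omega_Q := \{(x, y) \in (0, \infty)^2 : 1 \leq xy \leq Q\},$$
and the linearity of averaging under dyadic splitting gives the midpoint relation $(x_I, y_I) = \tfrac12(x_{I_-}, y_{I_-}) + \tfrac12(x_{I_+}, y_{I_+})$, with both children also in $\Omega_Q$.

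The heart of the proof is to construct $B : \Omega_Q \to [0, \infty)$ enjoying the size bound $B(x, y) \leq C\,x$ together with the concavity-with-gain estimate
$$B(x, y) - \tfrac12\bigl(B(x^+, y^+) + B(x^-, y^-)\bigr) \geq \frac{c}{Q}\cdot\frac{(x^+ - x^-)^2}{x}$$
whenever $(x^\pm, y^\pm) \in \Omega_Q$ and $2(x, y) = (x^+, y^+) + (x^-, y^-)$. A natural candidate is of the form $B(x, y) = x\,\Phi(xy)$ for a suitably chosen concave decreasing $\Phi:[1, Q] \to (0, 1]$; the concavity-with-gain is then verified by computing the Hessian of $B$ and exploiting the constraint $1 \leq xy \leq Q$. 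This Hessian calculation is where essentially all the effort lies.

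Granted such a $B$, the result follows by telescoping. For each $I \in \mathcal{D}(J)$ with $|I| \geq 2^{-N}|J|$, multiply the concavity inequality by $|I|$; using $|I_\pm| = |I|/2$,
$$|I|\,B(x_I, y_I) - |I_+|\,B(x_{I_+}, y_{I_+}) - |I_-|\,B(x_{I_-}, y_{I_-}) \geq \frac{c|I|}{Q}\cdot\frac{(\langle w\rangle_{I_+} - \langle w\rangle_{I_-})^2}{\langle w\rangle_I}.$$
Summing over all such $I$ collapses the left-hand side to $|J|\,B(x_J, y_J)$ minus a nonnegative boundary sum over the finest generation, which may be discarded. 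Letting $N \to \infty$ and invoking $B(x_J, y_J) \leq C \langle w \rangle_J$ yields
$$\sum_{I \in \mathcal{D}(J)} \frac{(\langle w\rangle_{I_+} - \langle w\rangle_{I_-})^2}{\langle w\rangle_I}\,|I| \leq \frac{CQ}{c}\,|J|\,\langle w\rangle_J,$$
which, after dividing by $|J|$, is exactly the claimed inequality.

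The principal obstacle is locating the correct Bellman function and, above all, securing the $1/Q$ factor in the concavity gain. A suboptimal gain (independent of $Q$) would only produce the quadratic bound $[w]_{A_2^d}^2\,\langle w\rangle_J$, losing the sharpness the theorem asserts. Carrying out the Hessian computation subject to the boundary behaviour at $xy = 1$ and $xy = Q$ is the single nontrivial step; everything else is a mechanical consequence of the averaging relation on $(x_I, y_I)$ and the telescoping described above.
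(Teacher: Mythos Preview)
The paper does not supply its own proof of this theorem; immediately after the statement it writes ``We refer to \cite{WI} for the proof'' and moves on. Your Bellman-function outline is faithful to Wittwer's original method and to the Bellman machinery used elsewhere in the paper, so in spirit you are aligned with the intended argument.

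That said, what you have written is a plan rather than a proof. You correctly identify that everything hinges on exhibiting $B$ on $\Omega_Q$ with size $B(x,y)\le Cx$ and concavity gain at least $(c/Q)(x^+-x^-)^2/x$, and you are candid that ``carrying out the Hessian computation\dots is the single nontrivial step.'' But you do not actually produce $\Phi$, verify the Hessian inequality, or show that the crucial $1/Q$ factor emerges from the constraint $xy\le Q$. The ansatz $B(x,y)=x\,\Phi(xy)$ is plausible, yet without specifying $\Phi$ and checking the differential inequality on the strip the argument is incomplete; that computation \emph{is} the content of Wittwer's result. The telescoping step you describe is routine and correct once $B$ is in hand, so the only genuine gap is the one you yourself flag.
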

We refer to \cite{WI} for the proof. You can find extended versions of Theorem \ref{CIT} and \ref{WSB} to the doubling positive measure $\sigma$ in \cite{CP}. One can find the Bellman function proof of the following three Lemmas in \cite{OB}.
\begin{lemma}\label{icl1}
For all dyadic interval J and all weights $w\,.$
$$\frac{1}{|J|}\sum_{I\in\mathcal{D}(J)}|I||\Delta_Iw|^2\frac{1}{\langle w\rangle_I^3}\leq \langle w^{-1}\rangle_J\,.$$
\end{lemma}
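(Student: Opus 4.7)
The natural approach is a Bellman function argument in the spirit of \cite{OB} and \cite{SP2}. Set $x_I=\langle w\rangle_I$ and $y_I=\langle w^{-1}\rangle_I$; by Cauchy--Schwarz one has $x_Iy_I\ge 1$, so $(x_I,y_I)$ lives in the convex domain $\Omega=\{(x,y):x,y>0,\ xy\ge 1\}$. I claim the candidate
\[
B(x,y)=y-\frac{1}{x}
\]
is a Bellman function adapted to the inequality: it is non-negative on $\Omega$ (precisely because $xy\ge 1$), satisfies the size bound $B(x,y)\le y$, and will deliver a dyadic concavity estimate producing exactly the summand on the left-hand side.

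The computational heart of the argument is to verify that whenever $x=(x_++x_-)/2$ and $y=(y_++y_-)/2$,
\[
B(x,y)-\frac{B(x_+,y_+)+B(x_-,y_-)}{2}\;\ge\;\frac{(x_+-x_-)^2}{4x^3}.
\]
The $y$ terms cancel because $B$ is linear in $y$; the remaining $x$-contribution simplifies to $\tfrac{x^2-x_+x_-}{xx_+x_-}$, and the identity $x^2-x_+x_-=(x_+-x_-)^2/4$ together with the AM--GM bound $x_+x_-\le x^2$ gives the desired lower bound. Noting $(x_{I_+}-x_{I_-})^2/4=|\Delta_Iw|^2$, this reads
\[
B(x_I,y_I)-\tfrac12\bigl[B(x_{I_+},y_{I_+})+B(x_{I_-},y_{I_-})\bigr]\;\ge\;\frac{|\Delta_Iw|^2}{\langle w\rangle_I^3}.
\]

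With the concavity in hand the rest is routine telescoping. Multiplying by $|I|$ and using $|I_\pm|=|I|/2$, induction over all $I\in\mathcal{D}(J)$ of generation at most $N$ yields
\[
|J|\,B(x_J,y_J)\;\ge\;\sum_{I\in\mathcal{D}_{<N}(J)}|I|\,\frac{|\Delta_Iw|^2}{\langle w\rangle_I^3}\;+\;\sum_{I\in\mathcal{D}_N(J)}|I|\,B(x_I,y_I).
\]
The boundary sum is non-negative since $B\ge 0$, so it may be dropped; letting $N\to\infty$ and using $B(x_J,y_J)\le\langle w^{-1}\rangle_J$ gives the lemma after dividing by $|J|$.

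The main obstacle, such as it is, lies entirely in guessing the Bellman function; once the form $B(x,y)=y-1/x$ is chosen, every constant works out on the nose, as its Hessian $\mathrm{diag}(-2/x^3,0)$ matches the summand $|\Delta_Iw|^2/\langle w\rangle_I^3$ with no loss. It is worth noting that the bound needs no $A_2^d$ hypothesis on $w$ — only the trivial Cauchy--Schwarz inequality $x_Iy_I\ge 1$ is invoked — which is precisely why this lemma can serve as a universally available ingredient in the embedding arguments of the later sections.
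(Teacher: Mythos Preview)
Your proof is correct and follows the same Bellman function strategy that the paper invokes (the paper does not prove this lemma itself but refers the reader to \cite{OB} for precisely such an argument). The choice $B(x,y)=y-1/x$ is exactly the right one here, and your verification of the dyadic concavity inequality and the telescoping step are both clean and accurate.
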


\begin{lemma}\label{icl2}
Let $w$ be a weight and $\{\alpha_I\}$ be a Carleson sequence of nonnegative numbers. If there exist a constant $Q>0$ such that $$\forall\,J\in\mathcal{D}\,,~~\frac{1}{|J|}\sum_{I\in\mathcal{D}(J)}\alpha_I\leq Q\,,$$then
$$\forall\,J\in\mathcal{D}\,,~~\frac{1}{|J|}\sum_{I\in\mathcal{D}(J)}\frac{\alpha_I}{\langle w^{-1}\rangle_I}\leq 4Q\langle w\rangle _J$$
and therefore if $w\in A^d_2$ the for any $J\in\mathcal{D}$ we have
$$\frac{1}{|J|}\sum_{I\in\mathcal{D}(J)}\langle w\rangle _I\alpha_I\leq 4 Q[w]_{A^d_2}\langle w\rangle _J\,.$$
\end{lemma}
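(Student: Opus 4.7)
The proof naturally splits into two parts: establishing the first inequality relating $\alpha_I/\langle w^{-1}\rangle_I$ to $\langle w\rangle_J$, and deducing the second from the first via the $A_2^d$ hypothesis. The second part is a one-line consequence of the $A_2^d$ condition $\langle w\rangle_I\langle w^{-1}\rangle_I\leq [w]_{A_2^d}$, which gives $\langle w\rangle_I\alpha_I\leq [w]_{A_2^d}\,\alpha_I/\langle w^{-1}\rangle_I$; summing over $I\in\mathcal{D}(J)$ and applying the first inequality then produces the factor $4Q[w]_{A_2^d}\langle w\rangle_J$ on the right-hand side. So everything reduces to proving the first inequality.

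The first inequality is the substantive step, and my plan is to prove it through a Bellman function argument. Introduce the variables $x_I=\langle w\rangle_I$, $y_I=\langle w^{-1}\rangle_I$ and $r_I=\frac{1}{|I|}\sum_{L\in\mathcal{D}(I)}\alpha_L$. These satisfy the martingale identities $x_I=(x_{I_+}+x_{I_-})/2$ and $y_I=(y_{I_+}+y_{I_-})/2$, while the Carleson hypothesis forces $r_I-(r_{I_+}+r_{I_-})/2=\alpha_I/|I|\geq 0$ and $r_I\in[0,Q]$ for every $I\in\mathcal{D}$. The task is to construct a function $B(x,y,r)$ on the domain $\{x,y>0,\,0\leq r\leq Q\}$ satisfying the size bound $0\leq B(x,y,r)\leq 4Qx$ and the concavity-type inequality
$$B(x,y,r)-\tfrac{1}{2}\bigl[B(x_+,y_+,r_+)+B(x_-,y_-,r_-)\bigr]\;\geq\;\frac{1}{y}\Bigl(r-\tfrac{r_++r_-}{2}\Bigr)$$
for every admissible splitting of $(x,y,r)$.

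Granted such a $B$, I would multiply the concavity inequality at scale $I$ by $|I|$ and use $|I_\pm|=|I|/2$ to obtain
$$B(x_I,y_I,r_I)|I|-B(x_{I_+},y_{I_+},r_{I_+})|I_+|-B(x_{I_-},y_{I_-},r_{I_-})|I_-|\;\geq\;\alpha_I/\langle w^{-1}\rangle_I.$$
Summing this telescoping inequality over $I\in\mathcal{D}(J)$ down to any fixed depth and then passing to the limit, using the size bound $B\geq 0$ to discard the remaining tail terms, gives
$$4Q\,\langle w\rangle_J|J|\;\geq\;B(x_J,y_J,r_J)|J|\;\geq\;\sum_{I\in\mathcal{D}(J)}\frac{\alpha_I}{\langle w^{-1}\rangle_I},$$
and dividing by $|J|$ yields the first inequality.

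The main obstacle is the construction and verification of $B$, as the rest of the argument is a standard Bellman telescoping machine. A natural candidate has the form $B(x,y,r)=4xr/Q-\psi(xy,r)/y$ for a suitable concave $\psi$, and I expect the verification to reduce (after the usual second-order Taylor expansion in $dx$, $dy$, $dr$) to showing that an explicit $3\times 3$ matrix is negative semidefinite on the admissible domain. I expect the precise Bellman function to be essentially the one used by Beznosova in \cite{OB} for her linear bound for the dyadic paraproduct, adapted to the present setting; the delicate point will be balancing convexity in $y$ against the required first-order behavior in $r$, so as to produce the weight $1/y$ on the right-hand side of the concavity inequality.
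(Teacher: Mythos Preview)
Your approach is correct and is exactly the one the paper invokes: the paper states the lemma without proof and refers to Beznosova \cite{OB} for the Bellman function argument, which is precisely the machinery you outline. One small omission in your setup is that the domain for $B$ must include the constraint $xy\geq 1$ (automatic here since $\langle w\rangle_I\langle w^{-1}\rangle_I\geq 1$ by Cauchy--Schwarz), as without it no function with your size and concavity requirements can exist; once you add this, the Bellman function from \cite{OB} fits your template and the telescoping argument goes through as you describe.
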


\begin{lemma}\label{icl3}
If $w\in A^d_2$ then there exists a constant $C>0$ such that  $$\forall J \in\mathcal{D}\,,~~\frac{1}{|J|}\sum_{I\in\mathcal{D}(J)}\bigg(\frac{\langle w\rangle _{I_+}-\langle w\rangle_{I_-}}{\langle w\rangle_I}\bigg)^2\,|\,I|\,\langle w\rangle_I\langle w^{-1}\rangle_{I}\leq C[w]_{A^d_2}\,.$$
\end{lemma}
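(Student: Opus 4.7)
The plan is a Bellman function argument on the $A_2^d$ domain. Writing $x_I:=\langle w\rangle_I$, $y_I:=\langle w^{-1}\rangle_I$, and $Q:=[w]_{A^d_2}$, each summand of the lemma divided by $|I|$ equals $((x_{I_+}-x_{I_-})/x_I)^2\,x_Iy_I$, and every triple $(x_I,y_I),(x_{I_\pm},y_{I_\pm})$ (with $(x_I,y_I)$ the midpoint of the other two) lies in
$$\Omega_Q:=\{(x,y)\in(0,\infty)^2:1\leq xy\leq Q\}.$$
I would look for a function $B:\Omega_Q\to[0,\infty)$ satisfying the size bound $0\leq B(x,y)\leq CQ$ on $\Omega_Q$ and the discrete concavity
$$B(x,y)-\frac{B(x_+,y_+)+B(x_-,y_-)}{2}\geq c\,\Big(\frac{x_+-x_-}{x}\Big)^2 xy,$$
valid whenever $(x,y)=((x_++x_-)/2,(y_++y_-)/2)$ with all three points in $\Omega_Q$.

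Granted such a $B$, the proof is a standard telescoping of the kind used in the derivation of Lemma~\ref{icl1}. Applying the concavity with $(x,y)=(x_I,y_I)$ and multiplying through by $|I|=|I_+|+|I_-|$ gives a per-$I$ inequality whose $B$-terms cancel in pairs when summed down the dyadic tree rooted at $J$ (truncated at a fine level and then passed to the limit): each $B(x_{I'},y_{I'})|I'|$ with $I'\neq J$ appears once with a plus and once with a minus sign. Only the root term $B(x_J,y_J)|J|\leq CQ|J|$ survives; the fine-level leaf contributions are nonnegative by $B\geq 0$ and can be dropped. This produces
$$c\sum_{I\in\mathcal D(J)}\Big(\frac{x_{I_+}-x_{I_-}}{x_I}\Big)^2 x_Iy_I\,|I|\leq CQ\,|J|,$$
which, after absorbing the obvious numerical factor, is exactly the claim.

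The main obstacle is the construction of $B$. Natural guesses fail instructively: $B(x,y)=\log(xy)$ is concave on $(0,\infty)^2$ but its defect is only of order $(dx/x)^2$ and misses the crucial factor $xy$; $B(x,y)=xy$ is not concave at all; and $B(x,y)=\sqrt{xy}$ has degenerate Hessian and fails the discrete inequality whenever $(x_+-x_-)(y_+-y_-)>0$. What is needed is a function whose Hessian on $\Omega_Q$ loses strict concavity slowly enough to produce the gain $xy$ on the right. Following Beznosova~\cite{OB}, I would look for $B$ built from a combination of $xy$, $\log(xy)$, and a correction vanishing at the boundary $xy=Q$, with coefficients tuned so that $-d^2B(dx,dy)\geq c(dx/x)^2 xy$ holds throughout $\Omega_Q$. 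Passage from this infinitesimal estimate to the discrete one is standard via Taylor expansion along the chord joining $(x_-,y_-)$ to $(x_+,y_+)$; a minor technical care is needed because $\Omega_Q$ is not convex (its upper boundary $xy=Q$ being a hyperbola), but the constraint that the midpoint $(x,y)$ also lies in $\Omega_Q$ bounds how far the chord may stray outside and the estimate survives.
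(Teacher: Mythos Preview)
Your proposal is correct and matches the paper's approach exactly: the paper does not prove Lemma~\ref{icl3} directly but refers the reader to the Bellman function argument in Beznosova~\cite{OB}, which is precisely the strategy you outline (size bound $0\le B\le CQ$ on $\Omega_Q$, discrete concavity with gain $(dx/x)^2xy$, then dyadic telescoping). Your discussion of why $\log(xy)$, $xy$, and $\sqrt{xy}$ individually fail, and your handling of the non-convexity of $\Omega_Q$, are accurate; the only thing left implicit is the explicit formula for $B$, which is indeed supplied in~\cite{OB}.
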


\section{Linear bound for $[\lambda_b,S]$ part II}
We will continue to estimate the sum (\ref{DS:esh}) in four parts.
\subsection{$\sum_{I=\hat{J}}$}\label{DSII1}

For this case, it is sufficient to show that $$|\il
S_{b,w^{-1}}h^{w^{-1}}_{\hat{J}},h^w_J\ir_w|\leq
c\|b\|_{BMO^{\,d}}[w]_{A^d_2}\,.$$ Since $\il h_k,h^w_I\ir_w$
could be non-zero only if $K\subseteq I$,
$$|\il S_{b,w^{-1}}h^{w^{-1}}_{\hat{J}},h^w_J\ir_w|=\Big|\Big\il\sum_{L\in\mathcal{D}}\Delta_{L}b\,\il w^{-1}h^{w^{-1}}_{L},h_L\ir h_{L_-},h^w_J\Big\ir_w\Big|=\Big|\sum_{L\in\mathcal{D}}\Delta_{L}b\,\il h^{w^{-1}}_{\hat{J}},h_L\ir_{w^{-1}}\il h_{L_-},h^w_J\ir_w\Big|\,$$
has non-zero term only when $L\subseteq \hat{J}$. Thus
\begin{align*}&|\il S_{b,w^{-1}}h^{w^-1}_{L},h^w_J\ir_w|=\Big|\sum_{L\in\mathcal{D}(\hat{J}\,)}\Delta_{L}b\,\il h^{w^{-1}}_{\hat{J}},h_L\ir_{w^{-1}}\il h_{L_-},h^w_J\ir_w\Big|\\
&~~~~~~~~~~~~=\Big|\sum_{L\in\mathcal{D}(J)}\Delta_{L}b\,\il h^{w^{-1}}_{\hat{J}},h_L\ir_{w^{-1}}\il h_{L_-},h^w_J\ir_w\Big|+\Big|\sum_{L\in\mathcal{D}( J^s)}\Delta_{L}b\,\il h^{w^{-1}}_{\hat{J}},h_L\ir_{w^{-1}}\il h_{L_-},h^w_J\ir_w\Big|\\
&~~~~~~~~~~~~~~~+|\Delta_{\hat{J}}\,b\,\il h^{w^{-1}}_{\hat{J}},h_{\hat{J}}\ir_{w^{-1}}\il h_{\hat{J}_-},h^w_J\ir_w|\\
&~~~~~~~~~~~~\leq |\il
S^{J}_{b,w^{-1}}h^{w^{-1}}_{\hat{J}},h^w_J\ir_w|+|\Delta_{\hat{J}}\,b\,\il
h^{w^{-1}}_{\hat{J}},h_{\hat{J}}\ir_{w^{-1}}\il
h_{\hat{J}_-},h^w_J\ir_w|
\end{align*}
in the second equality, $J^s$ denotes the sibling of $J$, so for all $L\subseteq J^s$, $\il h_{L_-},h^w_J\ir_w=0\,.$ Then, by (\ref{Pr:hh}),
\begin{equation}\label{DSII:e1}|\Delta_{\hat{J}}\,b\,\il h^{w^{-1}}_{\hat{J}},h_{\hat{J}}\ir_{w^{-1}}\il h_{\hat{J}_-},h^w_J\ir_w|\leq \sqrt{2}\|b\|_{BMO^{\,d}}[w]^{1/2}_{A_2^d}\,.\end{equation}
So for the remaining part:
\begin{equation}\label{DSII:e2}|\il S^{J}_{b,w^{-1}}h^{w^{-1}}_{\hat{J}},h^w_J\ir_w|=|h^{w^{-1}}_{\hat{J}}(J)\il S^J_{b,w^{-1}}\chi_J,h^w_J\ir_w|\leq c\|b\|_{BMO^{\,d}}[w]_{A^d_2}\,,\end{equation}
here the last inequality uses (\ref{Pr:p9}) and Lemma \ref{TS}.

\subsection{$\sum_{I=J}$}

In this case, the argument is similar to the argument in Section \ref{DSII1}. We have
$$|\il S_{b,w^{-1}}h^{w^{-1}}_{J},h^w_J\ir_w|=|\sum_{L\in\mathcal{D}}\Delta_{L}b\,\il h^{w^{-1}}_{J},h_{L}\ir_{w^{-1}}\il h_{L_-},h^w_J\ir_w|$$
here we have zero summands, unless $L\subseteq J$. Thus,
\begin{align*}
&|\il S_{b,w^{-1}}h^{w^{-1}}_{J},h^w_J\ir_w|=|\il S^J_{b,w^{-1}}h^{w^{-1}}_{J},h^w_J\ir_w|\\
&~~~~~~~~~~~~\leq\, |\il S^{J_+}_{b,w^{-1}}h^{w^{-1}}_{J},h^w_J\ir_w|+|\il S^{J_-}_{b,w^{-1}}h^{w^{-1}}_{J},h^w_J\ir_w|+|\Delta_{J}\,b\,\il h^{w^{-1}}_{J},h_{J}\ir_{w^{-1}}\il h_{J_-},h^w_J\ir_w|\\
&~~~~~~~~~~~~\leq\,c\,\|b\|_{BMO^{\,d}}[w]_{A_2^d}\,.
\end{align*}
In the last inequality, we use same arguments as in (\ref{DSII:e2}) for the first two term, and (\ref{Pr:p7}) for the last term.\\

\subsection{$\sum_{\hat{J}\subsetneq I}$  and $\sum_{I\subsetneq J}$}\label{DSII3}

To obtain our desired results, we need to understand the supports of $
S_{b}(w^{-1}h^{w^{-1}}_I)$ and $ S^{\ast}_b(wh^w_J)\,.$ Since
$$ S_{b}(w^{-1}h^{w^{-1}}_I)=\sum_{L\in\mathcal{D}}\Delta_Lb\,\il
w^{-1}h^{w^{-1}}_I,h_L\ir
h_{L_-}=\sum_{L\in\mathcal{D}}\Delta_Lb\,\il
h^{w^{-1}}_I,h_L\ir_{w^{-1}} h_{L_-}\,,$$ and $\langle h_I^{w^{-1}},h_L\rangle$ can be non-zero only when $L\subseteq I\,,$  $
S_{b}(w^{-1}h^{w^{-1}}_I)$ is supported by $I\,.$ Also,
\begin{equation}\il S_b(w^{-1}h^{w^{-1}}_I),h^w_J\ir_w=\il h^{w^{-1}}_I,
S^{\ast}_b(wh^w_J)\ir_{w^{-1}}\,,\label{DS2:e1}
\end{equation}
yield  that $ S^{\ast}_b(wh^w_J)=\sum_{L\in\mathcal{D}}\Delta_Lb\langle wh_J^w,h_{L_-}\rangle h_L$ is supported by $\hat{J}\,.$ Let us now consider the sum $\hat{J}\subsetneq I\,.$ Then
\begin{align}
&\bigg|\sum_{I,J:\hat{J}\subsetneq I}\il f,h^{w^{-1}}_I\ir_{w^{-1}}\il g,h^w_J\ir_w\il S_{b,w^{-1}}h^{w^{-1}}_I,h^w_J\ir_w\bigg|\nonumber\\
&~~~~~~~~~~~~=\,\bigg|\sum_{I,J:\hat{J}\subsetneq I}\il f,h^{w^{-1}}_I\ir_{w^{-1}}\il g,h^w_J\ir_w\il h^{w^{-1}}_I, S^{\ast}_{b}(wh^w_J)\ir_{w^{-1}}\bigg|\nonumber\\
&~~~~~~~~~~~~=\,\bigg|\sum_{J\in\mathcal{D}}\sum_{I:I\supsetneq\hat{J}}\il f,h^{w^{-1}}_I\ir_{w^{-1}}\il g,h^w_J\ir_wh^{w^{-1}}_I(\hat{J})\il S_{b,w^{-1}}\chi_{\hat{J}},h^w_J\ir_w\bigg|\label{DS2:e2}\\
&~~~~~~~~~~~~=\,\bigg|\sum_{J\in\mathcal{D}}\il f\ir_{\hat{J},w^{-1}} \il g,h^w_J\ir_w\il S_{b,w^{-1}}\chi_{\hat{J}},h^w_J\ir_w\bigg|\label{DS2:e3}\\
&~~~~~~~~~~~~\leq \,\|g\|_{L^2(w)}\bigg(\sum_{J\in\mathcal{D}}\il f\ir^2_{\hat{J},w^{-1}}\il
S_{b,w^{-1}}\chi_{\hat{J}},h^w_J\ir^2_w\bigg)^{1/2}\,,\label{DS2:e4}
\end{align}
here (\ref{DS2:e1}) and the fact that $ S^{\ast}_b(wh^w_J)$ is
supported by $\hat{J}$ are used for equality (\ref{DS2:e2}), and
(\ref{DS2:e3}) uses (\ref{Pr:e4}) and (\ref{DS2:e1}). If we show
that \begin{equation}\label{DS2:e5}\sum_{J\in\mathcal{D}}\il
f\ir^2_{\hat{J},w^{-1}}\il
S_{b,w^{-1}}\chi_{\hat{J}},h^w_J\ir^2_w\leq
c\|b\|^2_{BMO^{\,d}}[w]_{A_2^d}^2\|f\|^2_{L^2(w^{-1})}\,,\end{equation}
then we have $$ \Bigg|\sum_{I,J:\hat{J}\subsetneq I}\il
f,h^{w^{-1}}_I\ir_{w^{-1}}\il g,h^w_J\ir_w\il
S_{b,w^{-1}}h^{w^{-1}}_I,h^w_J\ir_w\Bigg|\leq
C\|b\|_{BMO^{\,d}}[w]_{A_2^d}\|f\|^2_{L(w^{-1})}\|g\|_{L^2(w)}\,.$$ To
prove the inequality (\ref{DS2:e5}), we apply the Theorem
\ref{CIT}. The embedding condition becomes
$$\sum_{J\in\mathcal{D}:J\subsetneq I}\il S_{b,w^{-1}}\chi_{\hat{J}},h^w_J\ir^2_w\leq c\|b\|^2_{BMO^{\,d}}[w]_{A_2^d}^2w^{-1}(I)\,$$ after shifting the indices. Since $\il h_{L_-},h^w_J\ir_w=0$ unless $L\subseteq\hat{J}\,,$  and we will sum over $J$ such that $J\subsetneq I$, we can write
\begin{align*}
\il S_{b,w^{-1}}\chi_{\hat{J}},h^w_J\ir_w&=\sum_{L\in\mathcal{D}}\Delta_Lb\il w^{-1}\chi_{\hat{J}},h_L\ir\il h_{L_-},h^w_J\ir_w=\sum_{L\in\mathcal{D}(\hat{J}\,)}\Delta_Lb\il w^{-1}\chi_{\hat{J}},h_L\ir\il h_{L_-},h^w_J\ir_w\\
&=\sum_{L\in\mathcal{D}(I\,)}\Delta_Lb\il w^{-1}\chi_{I},h_L\ir\il h_{L_-},h^w_J\ir_w=\il S^I_{b,w^{-1}}\chi_{I},h^w_J\ir_w\,.
\end{align*}
Thus,
$$\sum_{J\in\mathcal{D}:J\subsetneq I}\il S_{b,w^{-1}}\chi_{\hat{J}},h^w_J\ir^2_w=\sum_{J\in\mathcal{D}:J\subsetneq I}\il S^{I}_{b,w^{-1}}\chi_I,h^w_J\ir_w^2\leq\| S^{I}_{b,w^{-1}}\chi_I\|_{L^2(w)}^2\,,$$
last inequality due to (\ref{In:l2n}).  By Lemma \ref{TS}, the embedding condition holds. Hence we are done for the sum $\hat{J}\subsetneq I$. The part $\sum_{I\subsetneq J}$ is similar to $\sum_{\hat{J}\subsetneq I}\,.$ One
uses that $ S_{b}(w^{-1}h^{w^{-1}}_I)$ is supported by $I$ and
Theorem \ref{CIT}.

\subsection{ Proof of Theorem \ref{Mr:t1}}
We refer to \cite{OB} for following theorem.
\begin{theorem}\label{lbp}
The norm of dyadic paraproduct on the weighted Lebesgue space
$L^2(w)$ is bounded from above by a constant multiple of the
product of the $A^d_2$ characteristic of the weight $w$ and the
$BMO^{\,d}$ norm of $b$.
\end{theorem}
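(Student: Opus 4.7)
The plan is to adapt Beznosova's Bellman-function argument from \cite{OB}. The strategy is to expand $\pi_b f$ using the modified Haar identity $h_I=H_I^w/\sqrt{|I|}+(A_I^w/\sqrt{|I|})\chi_I$ from (\ref{Pr:HA}), split the result into two pieces, and estimate each via the embedding theorems of Section~4. Starting from $\|\pi_b f\|_{L^2(w)}^{2}=\|w^{1/2}\pi_b f\|_{L^2}^{2}$, the modified Haar decomposition yields
\begin{equation*}
w^{1/2}\pi_b f \;=\; \sum_{I} \langle b,h_I\rangle\,\langle f\rangle_I\,\frac{w^{1/2}H_I^w}{\sqrt{|I|}} \;+\; \sum_{I} \langle b,h_I\rangle\,\langle f\rangle_I\,A_I^w\,\frac{w^{1/2}\chi_I}{\sqrt{|I|}} \;=:\; \Sigma_1+\Sigma_2,
\end{equation*}
and the desired linear dependence on both $[w]_{A_2^d}$ and $\|b\|_{BMO^d}$ will emerge after taking square roots at the end.

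For $\Sigma_1$, the $L^2$-orthogonality of $\{w^{1/2}H_I^w\}$ with $\|w^{1/2}H_I^w\|_{L^2}^2\le |I|\langle w\rangle_I$ reduces matters to bounding $\sum_I |\langle b,h_I\rangle|^2\,|\langle f\rangle_I|^2\,\langle w\rangle_I$. Setting $F=fw\in L^2(w^{-1})$ so that $\|F\|_{L^2(w^{-1})}=\|f\|_{L^2(w)}$ and $\langle f\rangle_I=\langle w^{-1}\rangle_I\,\langle F\rangle_{I,w^{-1}}$, and using $\langle w\rangle_I\langle w^{-1}\rangle_I\le [w]_{A_2^d}$, this becomes a Weighted Carleson Embedding (Theorem~\ref{CIT}) applied with weight $w^{-1}$ and $\alpha_I=|\langle b,h_I\rangle|^2\langle w^{-1}\rangle_I$. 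The required Carleson hypothesis $\sum_{I\subset J}\alpha_I\le Q\,w^{-1}(J)$ with $Q=4\|b\|_{BMO^d}^2[w]_{A_2^d}$ is exactly the second conclusion of Lemma~\ref{icl2} applied to the $BMO^d$-Carleson sequence $\{|\langle b,h_I\rangle|^2\}$ (with $w^{-1}$ in place of $w$, using $[w^{-1}]_{A_2^d}=[w]_{A_2^d}$).

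For $\Sigma_2$ the summands are no longer orthogonal, so I would dualize: pair against an arbitrary $g\in L^2$, use the identity $\langle g\rangle_I=\langle w\rangle_I\langle G\rangle_{I,w}$ with $G=gw^{-1}$, and feed the resulting sum
\begin{equation*}
\sum_{I}\langle b,h_I\rangle\,A_I^w\,\langle w\rangle_I\langle w^{-1}\rangle_I\,\sqrt{|I|}\,\langle F\rangle_{I,w^{-1}}\,\langle G\rangle_{I,w}
\end{equation*}
into the Bilinear Embedding Theorem (the corollary of Theorem~\ref{BIT}) with the pair of weights $(w,w^{-1})$ and $\alpha_I$ built from $|\langle b,h_I\rangle||A_I^w|$. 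The three Carleson hypotheses (\ref{BITb})--(\ref{BITc}) then follow by combining Lemma~\ref{icl2} (which transfers the $BMO^d$-Carleson property of $\{|\langle b,h_I\rangle|^2\}$ through the weights), Lemma~\ref{icl3} (which controls $(\Delta_I w/\langle w\rangle_I)^2|I|\langle w\rangle_I\langle w^{-1}\rangle_I$ against $[w]_{A_2^d}|J|$), and Wittwer's inequality (Theorem~\ref{WSB}).

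The main obstacle is to verify the three Carleson conditions for $\Sigma_2$ with constants balanced so that exactly one factor of $[w]_{A_2^d}$ and one factor of $\|b\|_{BMO^d}$ remain after the final square root. The Bellman-function Lemmas~\ref{icl1}--\ref{icl3} are precisely designed to convert the geometric $A_2^d$-information about $w$ into Carleson estimates with the exact right constants, and, combined with the defining Carleson property of $BMO^d$, they produce the sharp linear bound of Theorem~\ref{lbp}.
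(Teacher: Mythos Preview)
Your proposal is correct and is precisely Beznosova's argument from \cite{OB}; the paper does not give its own proof of Theorem~\ref{lbp} but simply refers to \cite{OB}, and the tools you invoke (the weighted Haar system $\{H_I^w\}$, Theorems~\ref{BIT}--\ref{WSB}, and Lemmas~\ref{icl1}--\ref{icl3}) were imported into Section~4 exactly for this purpose. One small remark on your $\Sigma_2$ step: after pairing with $g\in L^2$ you obtain averages $\langle gw^{1/2}\rangle_I$, not $\langle g\rangle_I$, so the correct substitution is $G=gw^{-1/2}\in L^2(w)$ (giving $\langle gw^{1/2}\rangle_I=\langle w\rangle_I\langle G\rangle_{I,w}$ and $\|G\|_{L^2(w)}=\|g\|_{L^2}$), and the embedding theorem you actually use is Theorem~\ref{BIT} itself rather than its corollary---but these are cosmetic points and the argument goes through exactly as you outline.
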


To break $[b,S]$ into three parts, as in (\ref{Ma:sp}), we assumed
that $b\in BMO^{\,d}$ is compactly supported. However, we need to
replace such a $b$ with  a general $BMO^{\,d}$ function. In order
to pass from a compactly supported $b$ to general $b\in
BMO^{\,d}\,,$ we need the following lemma which is suggested in
\cite{JBG}\,.
\begin{lemma}\label{GJB}
Suppose $\phi\in BMO\,.$ Let $\widetilde{I}$ be the interval
concentric with $I$ having length $|\widetilde{I}|=3|I|\,.$ Then
there is $\psi\in BMO$ such that $\psi=\phi$ on $I$, $\psi=0$ on
$\R\setminus\widetilde{I}$ and $\|\psi\|_{BMO}\leq
c\|\phi\|_{BMO}\,.$
\end{lemma}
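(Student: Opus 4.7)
The plan is to build $\psi$ by multiplying $\phi$ with a Lipschitz cut-off supported in $\widetilde{I}$, after first normalising the mean. Because BMO is invariant under addition of constants, and both hypotheses and conclusion survive replacing $\phi$ by $\phi-\langle\phi\rangle_I$, I may assume throughout that $\langle\phi\rangle_I=0$. Let $\eta$ be the piecewise linear function that equals $1$ on $I$, equals $0$ off $\widetilde{I}$, and is linear on each of the two components of $\widetilde{I}\setminus I$ (each of length $|I|$). Then $\|\eta\|_\infty=1$ and $|\eta(x)-\eta(y)|\leq|x-y|/|I|$. Set $\psi:=\phi\,\eta$, so by construction $\psi=\phi$ on $I$ and $\psi\equiv 0$ on $\R\setminus\widetilde{I}$. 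Only the estimate $\|\psi\|_{BMO}\leq c\|\phi\|_{BMO}$ remains.

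For that estimate I would bound $\tfrac{1}{|J|}\int_J|\psi-\langle\psi\rangle_J|\,dx$ by case analysis on an arbitrary interval $J\subset\R$. The cases $J\cap\widetilde{I}=\emptyset$ and $J\subseteq I$ are trivial or immediate from $\phi\in BMO$. For $|J|>3|\widetilde{I}|$ the estimate
\begin{equation*}
\frac{1}{|J|}\int_J|\psi-\langle\psi\rangle_J|\,dx\leq \frac{2}{|J|}\int_{\widetilde{I}}|\phi|\,dx\leq \frac{c\,|\widetilde{I}|}{|J|}\|\phi\|_{BMO}\leq c\|\phi\|_{BMO}
\end{equation*}
uses that $\psi$ is supported in $\widetilde{I}\subset J$, together with the doubling estimate $|\langle\phi\rangle_{\widetilde{I}}-\langle\phi\rangle_I|\leq c\|\phi\|_{BMO}$ to control $\langle|\phi|\rangle_{\widetilde{I}}$. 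For the remaining intermediate case $|J|\leq 3|\widetilde{I}|$ with $J$ meeting $\widetilde{I}\setminus I$, I would use the pointwise decomposition
\begin{equation*}
\psi(x)-\psi(y)=(\phi(x)-\phi(y))\,\eta(x)+\phi(y)\,(\eta(x)-\eta(y))\,,
\end{equation*}
and integrate over $J\times J$: the first term is bounded by $2\|\phi\|_{BMO}$ since $\|\eta\|_\infty=1$, while the second, by the Lipschitz bound on $\eta$, is at most $(|J|/|I|)\,\langle|\phi|\rangle_J\leq c\|\phi\|_{BMO}$.

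The main obstacle is precisely the intermediate range $|J|\sim|I|$ with $J$ crossing $\partial I$, where $\phi$ may fail to be bounded (by John--Nirenberg it is only exponentially integrable), so a pointwise comparison of $\psi$ with $\phi$ cannot be hoped for. The saving point is that the Lipschitz decay of $\eta$ supplies the crucial factor $1/|I|$ that, combined with the $L^1$-integrability of $\phi-\langle\phi\rangle_I$ on $\widetilde{I}$ coming from the BMO hypothesis and the normalisation $\langle\phi\rangle_I=0$, yields the desired control of the error term $\phi(y)(\eta(x)-\eta(y))$ without any pointwise bound on $\phi$.
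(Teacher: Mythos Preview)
Your approach is correct and gives a valid proof, but it is \emph{not} the construction the paper uses. The paper follows Garnett's argument verbatim: after normalising $\langle\phi\rangle_I=0$, one decomposes $I=\bigcup_{n\ge 0}J_n$ into a Whitney-type family with $\mathrm{dist}(J_n,\partial I)=|J_n|$, reflects each $J_n$ (for $n\ge 1$) across the nearest endpoint of $I$ to obtain $K_n\subset\widetilde{I}\setminus I$, and sets $\psi=\phi$ on $I$, $\psi=\langle\phi\rangle_{J_n}$ on $K_n$, and $\psi=0$ elsewhere. The BMO bound then follows from the fact that $\psi$ is piecewise constant on $\widetilde{I}\setminus I$ with values that track the local averages of $\phi$ just inside $I$. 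Your construction instead multiplies $\phi$ by a Lipschitz tent $\eta$; this is more hands-on but arguably more transparent, and it has the feature that the extension depends linearly on $\phi$ via a fixed multiplier. Garnett's reflection, on the other hand, produces a $\psi$ that is constant on each $K_n$, which can be convenient in dyadic arguments.

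Two small points in your sketch deserve tightening. In the large-$J$ case you do not actually need $\widetilde{I}\subset J$ (which can fail even when $|J|>3|\widetilde{I}|$); the support condition alone gives $\int_J|\psi|\le\int_{\widetilde{I}}|\phi|$, which is all you use. In the intermediate case, the assertion $(|J|/|I|)\,\langle|\phi|\rangle_J\le c\|\phi\|_{BMO}$ is not quite immediate when $|J|\ll|I|$, since $\langle|\phi|\rangle_J$ can grow like $\log(|I|/|J|)\,\|\phi\|_{BMO}$. The cleanest fix is exactly what you allude to in your last paragraph: rewrite the bound on the second term as $\frac{1}{|I|}\int_J|\phi|$, enlarge $J$ to a fixed dilate of $I$ (say $21I$, which contains $J$ since $J$ meets $\widetilde{I}$ and $|J|\le 9|I|$), and use $\int_{21I}|\phi|\le c|I|\,\|\phi\|_{BMO}$ from the normalisation $\langle\phi\rangle_I=0$ together with the standard comparison $|\langle\phi\rangle_{21I}-\langle\phi\rangle_I|\le c\|\phi\|_{BMO}$. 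With that adjustment your argument is complete.
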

\begin{proof}
Without loss of generality, we assume $\langle\phi\rangle_I=0\,.$ Write $I=\bigcup^{\,\infty}_{\,n=0}J_n$ where $dist(J_n,\partial I)=|J_n|$, as in following figure.
\begin{center}
\setlength{\unitlength}{1mm}
\begin{picture}(120,20)
\put(0,10){\line(1,0){120}}\put(30,8.5){\line(0,1){3}}\put(90,8.5){\line(0,1){3}}\put(50,8.5){\line(0,1){3}}\put(70,8.5){\line(0,1){3}}
\put(40,8.5){\line(0,1){3}}\put(80,8.5){\line(0,1){3}}\put(35,8.5){\line(0,1){3}}\put(85,8.5){\line(0,1){3}}
\put(59,13){$J_0$}\put(44,13){$J_1$}\put(74,13){$J_2$}\put(36,13){$J_3$}\put(81,13){$J_4$}
\put(30,7){$\underbrace{~~~~~~~~~~~~~~~~~~~~~~~~~~~~~~~~~~~~~~~~~~~~~~~}$}\put(59,0){$I$}
\put(20,8.5){\line(0,1){3}}\put(10,8.5){\line(0,1){3}}\put(100,8.5){\line(0,1){3}}\put(110,8.5){\line(0,1){3}}
\put(13.5,5){$K_1$}\put(103.5,5){$K_2$}
\end{picture}
\end{center}
Then $J_0$ is the middle third of $I\,.$ For $n>0$, let $K_n$ be the reflection of $J_n$ across the nearest endpoint of $I$ and set
$$\psi(x)=\left\{\begin{array}{l}
                                          \phi(x)~~\textrm{if}~~x\in I\\
                                          \langle \phi\rangle_{J_n}~~\textrm{if}~~x\in K_n\\
                                          0~~\textrm{otherwise}\,.
                           \end{array}\right.$$
This construction of $\psi$ satisfies Lemma \ref{GJB}.

\end{proof}

By  Theorem \ref{DS:ma}, Corollary \ref{DS:mc}, Theorem \ref{lbp}, Lemma \ref{GJB}
and using the fact $\|\pi_b\|=\|\pi^{\ast}_b\|$, we can prove
Theorem \ref{Mr:t1}.
\begin{proof}[Proof of Theorem \ref{Mr:t1}]
For any compactly supported $b\in BMO\,,$
\begin{align*}
\|[b,H]\|_{L^2(w)\ra L^2(w)}\leq&\,C\sup_{\alpha,r}\,\|[b,S^{\,\alpha,r}]\|_{L^2(w)\ra L^2(w)}\\
\leq&\,C\sup_{\alpha,r}\big(\,\|[\pi_b,S^{\,\alpha,r}]\|_{L^2(w)\ra
L^2(w)}+\|[\pi^{\ast}_b,S^{\,\alpha,r}]\|_{L^2(w)\ra
L^2(w)}+\|[\lambda_b,S^{\,\alpha,r}]\|_{L^2(w)\ra L^2(w)}\big)\\
\leq&\,C\big(4\|\pi_b\|_{L^2(w)\ra
L^2(w)}\sup_{\alpha,r}\|S^{\,\alpha,r}\|_{L^2(w)\ra
L^2(w)}+C[w]_{A_2}\|b\|_{BMO}\,\big)\\
\leq&C[w]^2_{A_2}\|b\|_{BMO}\,.
\end{align*}
For fixed $b$, we consider the sequence of intervals $I_k=[-k,k]$
and the sequence of $BMO$ functions $b_k$ which are constructed as
in Lemma \ref{GJB}. Then, there is a constant $c$, which does not
depend on $k\,,$ such that $\|b_k\|_{BMO}\leq
c\|b\|_{BMO}\,.$ Furthermore, there is a uniform constant
$C$ such that
\begin{equation}\|[b_k,H]\|_{L^2(w)\ra L^2(w)}\leq C[w]^2_{A_2}\|b\|_{BMO}\,.\label{DS:bm}\end{equation} Therefore, for some subsequence of integers $k_j$ and $f\in L^2(w)\,,$ $[b_{k_j},H](f)$ converges to $[b,H](f)$ almost everywhere. Letting $j\ra\infty$ and using Fatou's lemma, we deduce that (\ref{DS:bm}) holds for all $b\in BMO\,.$
\end{proof}

\section{Linear bound for $\pi^{\ast}_bS$}

It might be useful to know what is the adjoint operator of $S$.
Let us define sgn$(I)=\pm1$ if $I=\hat{I}_{\mp}\,.$ Then, for any function $f,g\in L^2\,,$
\begin{align*}
\langle Sf,g\rangle&=\sum_{I\in\mathcal{D}}\sum_{J\in\mathcal{D}}\langle f,h_I\rangle\langle g,h_J\rangle\langle h_{I_-}-h_{I_+},h_J\rangle\\
&=\sum_{I\in\mathcal{D}}\langle f,h_I\rangle\langle g,h_{I_-}\rangle-\sum_{I\in\mathcal{D}}\langle f,h_I\rangle\langle g,h_{I_+}\rangle\\
&=\sum_{I\in\mathcal{D}}\langle f,h_I\rangle \big(\textrm{sgn}(I_-)\langle g,h_{I_-}\rangle+\textrm{sgn}(I_+)\langle g,h_{I_+}\rangle\big)\\
&=\sum_{I\in\mathcal{D}}\langle f,h_{\hat{I}}\rangle \textrm{sgn}(I)\langle g,h_I\rangle\\
&=\Big\langle
f,\sum_{I\in\mathcal{D}}\textrm{sgn}(I)\langle g,h_I\rangle
h_{\hat{I}}\Big\rangle=\langle f, S^{\ast}g\rangle\,.
\end{align*}
Now, we see the adjoint operator of dyadic shift operator $S$ is
$$S^{\ast}f(x)=\sum_{I\in\mathcal{D}}\textrm{sgn}(I)\langle
f,h_I\rangle h_{\hat{I}}(x)\,.$$ The following lemma provides the bound we are looking for the term $\pi^{\ast}_bS\,.$
\begin{lemma}\label{gcs}
Let $w\in A^d_2$ and $b\in BMO^{\,d}$. Then, there exists $C$ so that $$\|\pi^{\ast}_bS\|_{L^2(w)\ra L^2(w)}\leq C[w]_{A^d_2}\|b\|_{BMO^{\,d}}\,.$$
\end{lemma}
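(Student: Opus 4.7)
The plan is to expand $\pi_b^* S$ explicitly in the Haar basis, decompose the result as the difference of an adjoint dyadic paraproduct and a bounded-multiplier martingale transform, and apply the known linear $A_2^d$ bounds for each piece.

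First, using the identity $\langle Sf, h_I\rangle = \textrm{sgn}(I)\langle f, h_{\hat{I}}\rangle$ (the same computation that gives $S^*$ at the start of Section~6) and grouping the defining sum for $\pi_b^*(Sf)$ by the parent $J = \hat{I}$, one obtains
\begin{equation*}
\pi_b^* Sf = \sum_{J \in \mathcal{D}} \langle f, h_J\rangle \left[\langle b, h_{J_-}\rangle \frac{\chi_{J_-}}{|J_-|} - \langle b, h_{J_+}\rangle \frac{\chi_{J_+}}{|J_+|}\right].
\end{equation*}
The crucial pointwise identity $\chi_{J_\pm}/|J_\pm| = \chi_J/|J| \pm h_J/\sqrt{|J|}$ lets the bracket rearrange as $c_J\, h_J^2 - \mu_J\, h_J$, where $c_J := \langle b, h_{J_-}\rangle - \langle b, h_{J_+}\rangle$ and $\mu_J := (\langle b, h_{J_-}\rangle + \langle b, h_{J_+}\rangle)/\sqrt{|J|}$. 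Recalling that $h_J^2 = \chi_J/|J|$, this yields the decomposition
\begin{equation*}
\pi_b^* S f = \pi_\beta^* f - M_\mu f,
\end{equation*}
where $\pi_\beta^*$ is the adjoint dyadic paraproduct associated to $\beta$ with Haar coefficients $\{c_J\}$, and $M_\mu f := \sum_J \mu_J \langle f, h_J\rangle h_J$ is the Haar-diagonal martingale transform with multipliers $\{\mu_J\}$.

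Each piece obeys the desired linear $A_2^d$ bound. For $\pi_\beta^*$: the inequality $c_J^2 \leq 2(\langle b, h_{J_-}\rangle^2 + \langle b, h_{J_+}\rangle^2)$ together with the observation that each Haar coefficient $\langle b, h_I\rangle^2$, $I \subsetneq K$, appears exactly once among $\{\langle b, h_{J_\pm}\rangle^2 : J \in \mathcal{D}(K)\}$ gives $\frac{1}{|K|}\sum_{J \in \mathcal{D}(K)} c_J^2 \leq 2\|b\|_{BMO^{\,d}}^2$, whence $\|\beta\|_{BMO^{\,d}} \leq \sqrt{2}\,\|b\|_{BMO^{\,d}}$; Theorem~\ref{lbp} applied to $\pi_\beta$ on $L^2(w^{-1})$ and dualized (using $[w^{-1}]_{A_2^d} = [w]_{A_2^d}$) then yields $\|\pi_\beta^*\|_{L^2(w)\to L^2(w)} \leq C[w]_{A_2^d}\|b\|_{BMO^{\,d}}$. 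For $M_\mu$: the elementary estimate $|\langle b, h_I\rangle| \leq \sqrt{|I|}\,\|b\|_{BMO^{\,d}}$ applied with $I = J_\pm$ gives the pointwise bound $|\mu_J| \leq \sqrt{2}\,\|b\|_{BMO^{\,d}}$, so $M_\mu$ is (up to the constant $\sqrt{2}\|b\|_{BMO^{\,d}}$) a martingale transform with $\{\pm 1\}$-bounded multipliers, and Wittwer's linear-in-$[w]_{A_2^d}$ estimate \cite{WI} delivers $\|M_\mu\|_{L^2(w)\to L^2(w)} \leq C[w]_{A_2^d}\|b\|_{BMO^{\,d}}$. The triangle inequality closes the argument.

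The only real point of insight is the Haar-basis decomposition producing $\pi_\beta^* - M_\mu$; from there the proof piggybacks entirely on previously established linear-$[w]_{A_2^d}$ bounds. One minor technical remark: the sequence $\{c_J\}$ need not come from a bona fide BMO function, but Beznosova's proof of Theorem~\ref{lbp} in \cite{OB} depends only on the Carleson character of the Haar coefficient sequence, so treating $\pi_\beta^*$ formally via its defining coefficients is harmless.
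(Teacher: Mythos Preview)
Your proof is correct and takes a genuinely different, more economical route than the paper's. The paper passes to the bilinear form $\langle \pi_b^* S(fw^{-1/2}), gw^{1/2}\rangle$, expands via the disbalanced system $\{H_I^{w^{-1}}\}$, and then verifies a string of embedding conditions (Weighted Carleson Embedding for one piece, the Bilinear Embedding Theorem with three separate Carleson-type hypotheses for the other) using Lemmas~\ref{icl1}--\ref{icl3} and Theorem~\ref{WSB}. Your argument instead exploits the algebraic identity $\pi_b^* S = \pi_\beta^* - M_\mu$ directly at the level of Haar coefficients and reduces the statement to two already-established linear bounds: Beznosova's theorem for the paraproduct piece (dualized from $L^2(w^{-1})$, using $[w^{-1}]_{A_2^d}=[w]_{A_2^d}$) and Wittwer's martingale-transform bound for the diagonal piece. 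This is cleaner and more structural---it recognizes $\pi_b^* S$ as a sum of operators already in the literature---and in effect realizes concretely the paper's later remark (end of Section~9) that $\pi_b^* S$ falls within the second class of Haar shift operators. What the paper's approach buys is self-containment within its Bellman-function/embedding framework, in parallel with its treatments of $S\pi_b$ and $[\lambda_b,S]$; what yours buys is a short proof that bypasses all of the embedding verifications. Your technical remark that Beznosova's argument in \cite{OB} depends only on the Carleson character of the coefficient sequence (not on $\{c_J\}$ arising from an actual $BMO^d$ function) is correct and necessary.
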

\begin{proof} In order to prove Lemma \ref{gcs} it is enough to show that for any positive square integrable function $f,g$
\begin{equation}\langle \pi^{\ast}_bS(fw^{-1/2}),gw^{1/2}\rangle\leq C[w]_{A^d_2}\|b\|_{BMO^{\,d}}\|f\|_{L^2}\|g\|_{L^2}\,.\label{GB:p0}\end{equation}
Using the system of functions $\{H^w_I\}_{I\in\mathcal{D}}\,$
defined in (\ref{Pr:HA}), we can rewrite the left hand side of
(\ref{GB:p0})
\begin{align}
\langle \pi^{\ast}_bS(fw^{-1/2}),gw^{1/2}\rangle&=\langle S(fw^{-1/2}),\pi_b(gw^{1/2})\rangle=\sum_{I\in\mathcal{D}}\langle gw^{1/2}\rangle_I\langle b,h_I\rangle\langle S(fw^{-1/2}),h_I\rangle\nonumber\\
&=\sum_{I\in\mathcal{D}}\langle gw^{1/2}\rangle_I\langle b,h_I\rangle\,\textrm{sgn}(I)\langle fw^{-1/2},h_{\hat{I}}\rangle\nonumber\\
&=\sum_{I\in\mathcal{D}}\,\textrm{sgn}(I)\langle gw^{1/2}\rangle_I\langle b,h_I\rangle\langle fw^{-1/2},H^{w^{-1}}_{\hat{I}}\rangle\frac{1}{\sqrt{|\hat{I}|}}\nonumber\\
&~~~~~~~~~~~+\sum_{I\in\mathcal{D}}\,\textrm{sgn}(I)\langle gw^{1/2}\rangle_I\langle b,h_I\rangle\langle fw^{-1/2},A^{w^{-1}}_{\hat{I}}\chi_{\hat{I}}\rangle\frac{1}{\sqrt{|\hat{I}|}}\,.\label{GB:p1}
\end{align}
Our claim is that both sums in (\ref{GB:p1}) are bounded by $[w]_{A^d_2}\|b\|_{BMO^{\,d}}\|f\|_{L^2}\|g\|_{L^2}\,,$ i.e.
\begin{equation}\sum_{I\in\mathcal{D}}\,\textrm{sgn}(I)\langle gw^{1/2}\rangle_I\langle b,h_I\rangle\langle fw^{-1/2},H^{w^{-1}}_{\hat{I}}\rangle\frac{1}{\sqrt{|\hat{I}|}}\leq C[w]_{A^d_2}\|b\|_{BMO^{\,d}}\|f\|_{L^2}\|g\|_{L^2}\label{GB:e1}\end{equation}
and
\begin{equation}\sum_{I\in\mathcal{D}}\,\textrm{sgn}(I)\langle gw^{1/2}\rangle_I\langle b,h_I\rangle\langle fw^{-1/2},A^{w^{-1}}_{\hat{I}}\chi_{\hat{I}}\rangle\frac{1}{\sqrt{|\hat{I}|}}\leq C[w]_{A^d_2}\|b\|_{BMO^{\,d}}\|f\|_{L^2}\|g\|_{L^2}\,.\label{GB:e2}\end{equation}
First let us verify the bound for (\ref{GB:e1}). Using Cauchy-Schwarz inequality,
\begin{align}
&\sum_{I\in\mathcal{D}}\,\textrm{sgn}(I)\langle gw^{1/2}\rangle_I\langle b,h_I\rangle\langle fw^{-1/2},H^{w^{-1}}_{\hat{I}}\rangle\frac{1}{\sqrt{|\hat{I}|}}\nonumber\\
&~~~~~~~~~~~~\leq\bigg(\sum_{I\in\mathcal{D}}\langle gw^{1/2}\rangle_I^2\langle b,h_I\rangle^2\langle w^{-1}\rangle_{\hat{I}}\bigg)^{1/2}\bigg(\sum_{I\in\mathcal{D}}\frac{1}{|\hat{I}|\langle w^{-1}\rangle_{\hat{I}}}\langle f,w^{-1/2}H^{w^{-1}}_{\hat{I}}\rangle^2\bigg)^{1/2}\nonumber\\
&~~~~~~~~~~~~\leq \|f\|_{L^2}\bigg(\sum_{I\in\mathcal{D}}\langle gw^{1/2}\rangle_I^2\langle b,h_I\rangle^2\langle w^{-1}\rangle_{\hat{I}}\bigg)^{1/2}\,.\label{GB:e3}
\end{align}
Thus, for (\ref{GB:e1}), it is enough to show that
\begin{equation}
\sum_{I\in\mathcal{D}}\langle gw^{1/2}\rangle_I^2\langle b,h_I\rangle^2\langle w^{-1}\rangle_{\hat{I}}\leq C[w]^2_{A^d_2}\|b\|^2_{BMO^{\,d}}\|g\|^2_{L^2}\,.\label{GB:e4}
\end{equation}
It is clear that $2\langle w\rangle_{\hat{I}}\geq \langle w\rangle_I\,,$ thus
\begin{align*}
\sum_{I\in\mathcal{D}}\langle gw^{1/2}\rangle_I^2\langle b,h_I\rangle^2\langle w^{-1}\rangle_{\hat{I}}&=\sum_{I\in\mathcal{D}}\langle gw^{1/2}\rangle_I^2\langle b,h_I\rangle^2\langle w^{-1}\rangle_{\hat{I}}\langle w\rangle_I\langle w\rangle_I^{-1}\\
&\leq 2[w]_{A^d_2}\sum_{I\in\mathcal{D}}\langle gw^{1/2}\rangle_I^2\langle b,h_I\rangle^2\langle w\rangle_I^{-1}\,.
\end{align*}
If we show for all $J\in\mathcal{D}\,,$
\begin{equation}\frac{1}{|J|}\sum_{I\in\mathcal{D}(J)}\langle b,h_I\rangle^2\langle w\rangle_I^{-1}\langle w\rangle_I^2=\frac{1}{|J|}\sum_{I\in\mathcal{D}(J)}\langle b,h_I\rangle^2\langle w\rangle_I\leq [w]_{A^d_2}\|b\|^{2}_{BMO^{\,d}}\langle w\rangle_J\,,\label{GB:e5}\end{equation}
then by Weighted Carleson Embedding Theorem \ref{CIT} with $w$ instead of $w^{-1}\,,$ we will have (\ref{GB:e4})\,.
Since $b\in BMO^{\,d}\,,$ $\{\langle b,h_I\rangle^2\}_{I\in\mathcal{D}}$ is a Carleson sequence with constant $\|b\|^2_{BMO^{\,d}}$ that is
$$\frac{1}{|J|}\sum_{I\in\mathcal{D}(J)}\langle b,h_I\rangle^2\leq \|b\|^2_{BMO^{\,d}}\,.$$ Applying Lemma \ref{icl2} with $\alpha_I=\langle b,h_I\rangle$ we have inequality (\ref{GB:e5}). We now concentrate on the estimate (\ref{GB:e2}), we can estimate the left hand side of (\ref{GB:e2}) as follows.
\begin{align*}&\sum_{I\in\mathcal{D}}\,\textrm{sgn}(I)\langle gw^{1/2}\rangle_I\langle b,h_I\rangle\langle fw^{-1/2},A^{w^{-1}}_{\hat{I}}\chi_{\hat{I}}\rangle\frac{1}{\sqrt{|\hat{I}|}}\\
&~~~~~~~~~~~~=\sum_{I\in\mathcal{D}}\,\textrm{sgn}(I)\langle gw^{1/2}\rangle_{I}\langle b,h_I\rangle\langle fw^{-1/2}\rangle_{\hat{I}}A^{w^{-1}}_{\hat{I}}\sqrt{|\hat{I}|}\\
&~~~~~~~~~~~~\leq\sum_{I\in\mathcal{D}}\langle gw^{1/2}\rangle_{I}\,|\langle b,h_{I}\rangle|\,\langle fw^{-1/2}\rangle_{\hat{I}}\,|A^{w^{-1}}_{\hat{I}}|\sqrt{|\hat{I}|}\\
&~~~~~~~~~~~~\leq2\sum_{I\in\mathcal{D}}\langle gw^{1/2}\rangle_{\hat{I}}\,|\langle b,h_{I}\rangle|\,\langle fw^{-1/2}\rangle_{\hat{I}}\,|A^{w^{-1}}_{\hat{I}}|\sqrt{|\hat{I}|}\\
&~~~~~~~~~~~~=2\sum_{I\in\mathcal{D}}\langle gw^{1/2}\rangle_I\,\big(|\langle b,h_{I_-}\rangle|+|\langle b,h_{I_+}\rangle|\big)\,\langle fw^{-1/2}\rangle_I\,|A_I^{w^{-1}}|\sqrt{|I|}\,.
\end{align*}
By Bilinear Embedding Theorem, inequality (\ref{GB:e2}) holds
provided the following three inequalities hold,
\begin{align}
&\forall\,J\in\mathcal{D}, \frac{1}{|J|}\sum_{I\in\mathcal{D}(J)}\big(|\langle b,h_{I_-}\rangle|+|\langle b,h_{I_+}\rangle|\big)|A_I^{w^{-1}}|\sqrt{|I|}\langle w^{-1}\rangle_I\langle w\rangle_I\leq C\|b\|_{BMO^{\,d}}\,[w^{-1}]_{A^d_2}\,,\label{GB:e6}\\
&\forall\,J\in\mathcal{D}, \frac{1}{|J|}\sum_{I\in\mathcal{D}(J)}\big(|\langle b,h_{I_-}\rangle|+|\langle b,h_{I_+}\rangle|\big)|A_I^{w^{-1}}|\sqrt{|I|}\langle w^{-1}\rangle_I\leq C\|b\|_{BMO^{\,d}}\,[w^{-1}]_{A^d_2}\langle w^{-1}\rangle_J\,,\label{GB:e7}\\
&\forall\,J\in\mathcal{D}, \frac{1}{|J|}\sum_{I\in\mathcal{D}(J)}\big(|\langle b,h_{I_-}\rangle|+|\langle b,h_{I_+}\rangle|\big)|A_I^{w^{-1}}|\sqrt{|I|}\langle w\rangle_I\leq C\|b\|_{BMO^{\,d}}\,[w^{-1}]_{A^d_2}\langle w\rangle_J\,.\label{GB:e8}
\end{align}
For (\ref{GB:e6}), by Cauchy-Schwarz inequality
\begin{align*}
&\frac{1}{|J|}\sum_{I\in\mathcal{D}(J)}\big(|\langle b,h_{I_-}\rangle|+|\langle b,h_{I_+}\rangle|\big)|A_I^{w^{-1}}|\sqrt{|I|}\langle w^{-1}\rangle_I\langle w\rangle_I\\
&\leq \bigg(\frac{1}{|J|}\sum_{I\in\mathcal{D}(J)} \big(|\langle b,h_{I_-}\rangle|+|\langle b,h_{I_+}\rangle|\big)^2\langle w^{-1}\rangle_I\langle w\rangle_I\bigg)^{1/2}\bigg(\frac{1}{|J|}\sum_{I\in\mathcal{D}(J)}(A^{w^{-1}}_I)^2|I|\langle w^{-1}\rangle_I\langle w\rangle_I\bigg)^{1/2}\,.
\end{align*}
Since \begin{equation}\sum_{I\in\mathcal{D}}(|\langle b,h_{I_-}\rangle| +|\langle b,h_{I_+}\rangle|)^2\leq 3\sum_{I\in\mathcal{D}}\langle b,h_I\rangle^2\,,\label{GE:e9}\end{equation}
$$\frac{1}{|J|}\sum_{I\in\mathcal{D}(J)}(|\langle b,h_{I_-}\rangle|+|\langle b,h_{I_+}\rangle|)^2\langle w^{-1}\rangle_I\langle w\rangle_I\leq C [w^{-1}]_{A^d_2}\frac{1}{|J|}\sum_{I\in\mathcal{D}(J)}\langle b,h_I\rangle^2\leq C[w^{-1}]_{A^d_2}\|b\|^2_{BMO^{\,d}}\,,$$
and by Lemma \ref{icl3}, $$\frac{1}{|J|}\sum_{I\in\mathcal{D}(J)}(A^{w^{-1}}_I)^2|I|\langle w^{-1}\rangle_I\langle w\rangle_I\leq C\,[w^{-1}]_{A^d_2}\,.$$
Thus embedding condition (\ref{GB:e6}) holds.
For (\ref{GB:e7}), by Cauchy-Schwarz inequality and (\ref{GE:e9}) we have
\begin{align*}
&\frac{1}{|J|}\sum_{I\in\mathcal{D}(J)}(|\langle b,h_{I_-}\rangle|+|\langle b,h_{I_+}\rangle|)\,|A_I^{w^{-1}}|\sqrt{|I|}\langle w^{-1}\rangle_I\\
&~~~~~~~~~~~~\leq C\bigg(\frac{1}{|J|}\sum_{I\in\mathcal{D}(J)}\langle b,h_I\rangle^2\langle w^{-1}\rangle_I\bigg)^{1/2}\bigg(\frac{1}{|J|}\sum_{I\in\mathcal{D}(J)}(A^{w^{-1}}_I)^2|I|\langle w^{-1}\rangle_I\bigg)^{1/2}\,.
\end{align*}
By Theorem \ref{WSB},
$$\frac{1}{|J|}\sum_{I\in\mathcal{D}(J)}(A^{w^{-1}}_I)^2|I|\langle w^{-1}\rangle_I=\frac{1}{|J|}\sum_{I\in\mathcal{D}(J)}\bigg(\frac{\langle w^{-1}\rangle_{I_+}-\langle w^{-1}\rangle_{I_-}}{2\langle w^{-1}\rangle_I}\bigg)^2|I|\langle w^{-1}\rangle_I\leq C[w^{-1}]_{A^d_2}\langle w^{-1}\rangle_J\,.$$
Similarly with (\ref{GB:e5}), we have $$\frac{1}{|J|}\sum_{I\in\mathcal{D}(J)}\langle b,h_I\rangle^2\langle w^{-1}\rangle_I\leq [w^{-1}]_{A_2}\|b\|^2_{BMO^{\,d}}\langle w^{-1}\rangle_J\,.$$
To finish, we must estimate $(\ref{GB:e8})$. In a similar way with (\ref{GB:e7}), we need to estimate
$$\bigg(\frac{1}{|J|}\sum_{I\in\mathcal{D}(J)}\langle b,h_I\rangle^2\langle w\rangle _I\bigg)^{1/2}\bigg(\frac{1}{|J|}\sum_{I\in\mathcal{D}(J)}(A^{w^{-1}}_I)^2|I|\langle w\rangle_I\bigg)^{1/2}\,.$$
By Lemma \ref{icl1}, we have
\begin{align*}
\frac{1}{|J|}\sum_{I\in\mathcal{D}(J)}(A^{w^{-1}}_I)^2|I|\langle w\rangle_I&\leq [w^{-1}]_{A^d_2}\frac{1}{|J|}\sum_{I\in\mathcal{D}(J)}(A^{w^{-1}}_I)^2|I|\langle w^{-1}\rangle_I^{-1}\nonumber\\
&=[w^{-1}]_{A^d_2}\frac{1}{|J|}\sum_{I\in\mathcal{D}(J)}\bigg(\frac{\langle w^{-1}\rangle_{I_+}-\langle w^{-1}\rangle_{I_-}}{\langle w^{-1}\rangle_I^3}\bigg)^2|I|\nonumber\\
&\leq C[w^{-1}]_{A^d_2}\langle w^{-1}\rangle_{J}\,.
\end{align*}
This completes the proof of Lemma \ref{gcs}.
\end{proof}
Due to the almost self adjoint property of the Hilbert transform,
a certain bound for $\pi_b^{\ast}H$ immediately returns the same
bound for $H\pi_b$. However we have to prove the boundedness of
$S\pi_b$ independently because $S$ is not self adjoint.
\section{Linear bound for $S\pi_b$}

\begin{lemma}\label{scp}
Let $w\in A^d_2$ and $b\in BMO^{\,d}$. Then, there exists $C$ so that $$\|S\pi_b\|_{L^2(w)\ra L^2(w)}\leq C[w]_{A^d_2}\|b\|_{BMO^{\,d}}\,.$$
\end{lemma}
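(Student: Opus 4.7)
The plan is to mirror the strategy used in the proof of Lemma \ref{gcs}. For positive $f,g\in L^2$ I pair
$$\langle S\pi_b(fw^{-1/2}),gw^{1/2}\rangle=\langle \pi_b(fw^{-1/2}),S^{\ast}(gw^{1/2})\rangle\,,$$
expand $\pi_b$, and use $\langle h_I,S^{\ast}(gw^{1/2})\rangle=\langle Sh_I,gw^{1/2}\rangle=\langle h_{I_-}-h_{I_+},gw^{1/2}\rangle$ together with the reindexing $I=\hat{J}$ to reduce the pairing to
$$\sum_{J\in\mathcal{D}}\textrm{sgn}(J)\,\langle b,h_{\hat{J}}\rangle\,\langle fw^{-1/2}\rangle_{\hat{J}}\,\langle h_J,gw^{1/2}\rangle\,.$$
Substituting the decomposition $h_J=H^w_J/\sqrt{|J|}+A^w_J\chi_J/\sqrt{|J|}$ coming from (\ref{Pr:HA}) splits this sum into two pieces, $T_1$ (containing $H^w_J$) and $T_2$ (containing $A^w_J$), exactly in the spirit of the proof of Lemma \ref{gcs}, but with the roles of $f$ and $g$ now swapped and with the disbalanced Haar functions living on $J$ rather than on $\hat{J}$.

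For $T_1$ I apply Cauchy--Schwarz and use Bessel's inequality (\ref{In:e4}) to bound the $g$-factor $\sum_J\langle g,w^{1/2}H^w_J\rangle^2/(|J|\langle w\rangle_J)$ by $\|g\|_{L^2}^2$. What is left is
$$\sum_{J\in\mathcal{D}}\langle b,h_{\hat{J}}\rangle^2\,\langle fw^{-1/2}\rangle_{\hat{J}}^2\,\langle w\rangle_J\,,$$
which, after $\langle w\rangle_J\leq 2\langle w\rangle_{\hat{J}}$ and reindexing by the parent, is precisely the dual version of (\ref{GB:e4}) under the symmetry $w\leftrightarrow w^{-1}$, $f\leftrightarrow g$. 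The same argument, via Lemma \ref{icl2} applied to the Carleson sequence $\{\langle b,h_I\rangle^2\}$ followed by the Weighted Carleson Embedding Theorem \ref{CIT}, therefore yields $|T_1|\leq C[w]_{A_2^d}\|b\|_{BMO^{\,d}}\|f\|_{L^2}\|g\|_{L^2}$.

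The main work is on $T_2$. Using $g\geq 0$ together with $\langle gw^{1/2}\rangle_J\leq 2\langle gw^{1/2}\rangle_{\hat{J}}$ I move the average up to the parent, reindex over $I=\hat{J}$, and collect the two children's contributions into $|A^w_{I_-}|+|A^w_{I_+}|$, reducing $T_2$ to a bilinear sum
$$\sum_{I\in\mathcal{D}}\alpha_I\,\langle fw^{-1/2}\rangle_I\,\langle gw^{1/2}\rangle_I\,|I|\,,\qquad \alpha_I:=|\langle b,h_I\rangle|\,(|A^w_{I_-}|+|A^w_{I_+}|)\,|I|^{-1/2}\,,$$
to which the Bilinear Embedding Corollary applies (with $v=w^{-1}$ and target constant $Q=C[w]_{A_2^d}\|b\|_{BMO^{\,d}}$) once the three Carleson-type conditions (\ref{BITb}), (\ref{BITa}), (\ref{BITc}) are verified. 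Two of them reduce, by a Cauchy--Schwarz splitting in $I\in\mathcal{D}(J)$, to the Carleson estimate for $\{\langle b,h_I\rangle^2\}$ (Lemma \ref{icl2}) combined with Theorem \ref{WSB} (Wittwer) and Lemma \ref{icl1}, after regrouping the $A^w$-coefficients at grandchildren. The real obstacle I anticipate is the mixed condition that pairs the $A_2^d$-product $\langle w\rangle_I\langle w^{-1}\rangle_I$ with $(|A^w_{I_-}|+|A^w_{I_+}|)|I|^{-1/2}$: none of Lemmas \ref{icl1}--\ref{icl3} controls this quantity directly, since the $A^w$-coefficients here live one level below $I$ while the $A_2^d$-product is evaluated at $I$. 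I would establish this remaining Carleson estimate by a dedicated Bellman-function argument in the spirit of \cite{OB} and \cite{SP2}, finding a concave function on an appropriate convex subdomain of the $(\langle w\rangle,\langle w^{-1}\rangle)$-plane whose Hessian dominates a single summand. That Bellman step is exactly the content deferred to Section 8.
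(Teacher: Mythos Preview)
Your plan genuinely differs from the paper's. The paper does \emph{not} mirror Lemma~\ref{gcs}: instead of the $H^w/A^w$ decomposition it works with the weighted pairing $\langle S\pi_b(w^{-1}f),g\rangle_w$, expands $g$ in the disbalanced Haar basis $\{h^w_J\}$, and splits according to $J=I$, $J=\hat I$, $J\supsetneq\hat I$. The off--diagonal piece then carries the coefficient $|\langle h_{I_\pm},w\rangle|=\sqrt{|I_\pm|}\,|\Delta_{I_\pm}w|$, with \emph{no} $\langle w\rangle_{I_\pm}$ in the denominator, and Petermichl's Theorem~\ref{BIT} (not the Corollary) is applied. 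The hard embedding condition is (\ref{SP:e8}), which is exactly what the Bellman Lemma~\ref{wls} is built to prove.

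Your route, by contrast, places the $H^w_J$ decomposition at the child level $J$, so after reindexing the bilinear coefficient is $|A^w_{I_\pm}|=|\Delta_{I_\pm}w|/\langle w\rangle_{I_\pm}$. That extra $\langle w\rangle_{I_\pm}$ in the denominator is not innocuous: in the embedding condition of type (\ref{BITa}) (the one carrying $\langle w\rangle_I$), after Cauchy--Schwarz you are left with
\[
\frac{1}{|J|}\sum_{I\in\mathcal D(J)}|I|\,\langle w\rangle_I\,(A^w_{I_\pm})^2
=\frac{2}{|J|}\sum_{K\in\mathcal D(J)}|K|\,\frac{\langle w\rangle_{\hat K}}{\langle w\rangle_K}\cdot\frac{|\Delta_Kw|^2}{\langle w\rangle_K}\,,
\]
and reducing this to Wittwer's inequality (Theorem~\ref{WSB}) requires $\langle w\rangle_{\hat K}\lesssim\langle w\rangle_K$, i.e.\ reverse doubling, which for a generic $A_2^d$ weight costs an extra factor $[w]_{A_2^d}$. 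So your claim that ``two of them reduce'' to Lemmas~\ref{icl1}, \ref{icl2} and Theorem~\ref{WSB} is too optimistic: the $\langle w^{-1}\rangle_I$--condition does go through (via $\langle w^{-1}\rangle_I\le 2[w]_{A_2^d}/\langle w\rangle_{I_\pm}$ and Lemma~\ref{icl1}), but the $\langle w\rangle_I$--condition does not, at least not by the argument you sketch. You would need a second Bellman function (or a variant of Lemma~\ref{wls} adapted to child--level denominators) in addition to the one you anticipate for the mixed condition. The paper's choice of decomposition is precisely what sidesteps this.
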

\begin{proof}
We are going to prove Lemma \ref{scp} by showing
\begin{equation}
\langle S\pi_b(w^{-1}f),g\rangle_w\leq C[w]_{A^d_2}\|b\|_{BMO^{\,d}}\|f\|_{L^2(w^{-1})}\|g\|_{L^2(w)}\,,\label{SP:e1}\end{equation}
for any positive function $f,g\in L^2\,.$
Since $\langle S\pi_b(f),h_I\rangle=\textrm{sgn}(I)\langle \pi_b(f),h_{\hat{I}}\rangle=\textrm{sgn}(I)\langle f\rangle_{\hat{I}}\langle b,h_{\hat{I}}\rangle\,,$ We have
$$S\pi_b(f)=\sum_{I\in\mathcal{D}}\textrm{sgn}(I)\langle f\rangle_{\hat{I}}\langle b,h_{\hat{I}}\rangle h_I\,.$$
By expanding $g$ in the disbalanced Haar system for $L^2(w)\,,$
\begin{align*}
\langle S\pi_b(w^{-1}f),g\rangle_w&=\sum_{I\in\mathcal{D}}\langle w^{-1}f\rangle_{\hat{I}}\langle b,h_{\hat{I}}\rangle \textrm{sgn}(I)\langle h_I,g\rangle_w\\
&=\sum_{I\in\mathcal{D}}\sum_{J\in\mathcal{D}}\textrm{sgn}(I)\langle w^{-1}\rangle_{\hat{I}}\langle f\rangle_{\hat{I},w^{-1}}\langle b,h_{\hat{I}}\rangle \langle g,h^w_J\rangle_w\langle h_I,h^w_J\rangle_w\,.
\end{align*}
Since $\langle h_I,h^w_J\rangle_w$ could be non zero only if
$J\supseteq I\,,$ we can split above sum into three parts,
\begin{equation}
\sum_{I\in\mathcal{D}}\textrm{sgn}(I)\langle w^{-1}\rangle_{\hat{I}}\langle f\rangle_{\hat{I},w^{-1}}\langle b,h_{\hat{I}}\rangle \langle g,h^w_I\rangle_w\langle h_I,h^w_I\rangle_w,\label{SP:e2}
\end{equation}
\begin{equation}
\sum_{I\in\mathcal{D}}\textrm{sgn}(I)\langle w^{-1}\rangle_{\hat{I}}\langle f\rangle_{\hat{I},w^{-1}}\langle b,h_{\hat{I}}\rangle \langle g,h^w_{\hat{I}}\rangle_w\langle h_I,h^w_{\hat{I}}\rangle_w,\label{SP:ee}
\end{equation}
and \begin{equation}
\sum_{I\in\mathcal{D}}\sum_{J:J\supsetneq \hat{I}}\textrm{sgn}(I)\langle w^{-1}\rangle_{\hat{I}}\langle f\rangle_{\hat{I},w^{-1}}\langle b,h_{\hat{I}}\rangle \langle g,h^w_J\rangle_w\langle h_I,h^w_J\rangle_w\,.\label{SP:e3}\end{equation}
We claim that all sums, (\ref{SP:e2}), (\ref{SP:ee}), and (\ref{SP:e3}), can be bounded with a bound that depends on $[w]_{A^d_2}\|b\|_{BMO^{\,d}}$ at most linearly.
Since $|\langle h_I,h_I^w\rangle_w|\leq \langle w\rangle_I^{1/2}\,,$ we can estimate (\ref{SP:e2})
\begin{align*}
&\bigg|\sum_{I\in\mathcal{D}}\textrm{sgn}(I)\langle w^{-1}\rangle_{\hat{I}}\langle f\rangle_{\hat{I},w^{-1}}\langle b,h_{\hat{I}}\rangle \langle g,h^w_I\rangle_w\langle h_I,h^w_I\rangle_w\,\bigg|\\
&~~~~~~~~~~~~\leq\bigg(\sum_{I\in\mathcal{D}}\langle w^{-1}\rangle_{\hat{I}}^2\langle f\rangle_{\hat{I},w^{-1}}^2\langle b,h_{\hat{I}}\rangle^2\langle w\rangle_I\bigg)^{1/2}\bigg(\sum_{I\in\mathcal{D}}\langle g,h_I^w\rangle^2\bigg)^{1/2}\\
&~~~~~~~~~~~~\leq C\|g\|_{L^2(w)}[w]_{A^d_2}^{1/2}\bigg(\sum_{I\in\mathcal{D}}\langle f\rangle_{I,w^{-1}}^2\langle b,h_{I}\rangle^2\langle w^{-1}\rangle_{I}\bigg)^{1/2}\,.
\end{align*}
By Weighted Carleson Embedding Theorem \ref{CIT},
 $$\sum_{I\in\mathcal{D}}\langle f\rangle_{I,w^{-1}}^2\langle b,h_{I}\rangle^2\langle w^{-1}\rangle_{I}\leq C[w]_{A^d_2}\|b\|^2_{BMO^{\,d}}\|f\|^2_{L^2(w^{-1})}$$
 is provided by $$\frac{1}{|J|}\sum_{I\in\mathcal{D}(J)}\langle b,h_I\rangle^2\langle w^{-1}\rangle_I\leq [w]_{A^d_2}\|b\|_{BMO^{\,d}}^2\langle w^{-1}\rangle_J\,$$
 which we already have in (\ref{GB:e5})\,. Thus, we have
 \begin{equation}\sum_{I\in\mathcal{D}}\textrm{sgn}(I)\langle w^{-1}\rangle_{\hat{I}}\langle f\rangle_{\hat{I},w^{-1}}\langle b,h_{\hat{I}}\rangle \langle g,h^w_I\rangle_w\langle
 h_I,h^w_I\rangle_w\leq C[w]_{A^d_2}\|b\|_{BMO^{\,d}}\|f\|_{L^2(w^{-1})}\|g\|_{L^2(w)}\,.\label{SP:e4}
 \end{equation}
 Similarly to (\ref{SP:e2}), we can estimate (\ref{SP:ee}) using $|\langle h_I,h^w_{\hat{I}}\rangle_w|\leq \langle w\rangle ^{1/2}_I\leq \sqrt{2}\langle w\rangle_{\hat{I}}^{1/2}$
\begin{align*}
&\bigg|\sum_{I\in\mathcal{D}}\textrm{sgn}(I)\langle w^{-1}\rangle_{\hat{I}}\langle f\rangle_{\hat{I},w^{-1}}\langle b,h_{\hat{I}}\rangle \langle g,h^w_{\hat{I}}\rangle_w\langle h_I,h^w_{\hat{I}}\rangle_w\bigg|\\
&~~~~~~~~~~~~\leq\sqrt{2}\sum_{I\in\mathcal{D}}\langle w^{-1}\rangle_{\hat{I}}\langle f\rangle_{\hat{I},w^{-1}}\,|\langle b,h_{\hat{I}}\rangle|\, \langle g,h^w_{\hat{I}}\rangle_w\langle w\rangle^{1/2}_{\hat{I}}\\
&~~~~~~~~~~~~=2\sqrt{2}\sum_{I\in\mathcal{D}}\langle w^{-1}\rangle_{I}\langle f\rangle_{I,w^{-1}}\,|\langle b,h_{I}\rangle|\, \langle g,h^w_{I}\rangle_w\langle w\rangle^{1/2}_{I}\\
&~~~~~~~~~~~~\leq2\sqrt{2}\bigg(\sum_{I\in\mathcal{D}}\langle w^{-1}\rangle_{I}^2\langle f\rangle_{I,w^{-1}}^2\langle b,h_{I}\rangle^2\langle w\rangle_I\bigg)^{1/2}\bigg(\sum_{I\in\mathcal{D}}\langle g,h_I^w\rangle^2\bigg)^{1/2}\\
&~~~~~~~~~~~~\leq C\|g\|_{L^2(w)}[w]_{A^d_2}^{1/2}\bigg(\sum_{I\in\mathcal{D}}\langle f\rangle_{I,w^{-1}}^2\langle b,h_{I}\rangle^2\langle w^{-1}\rangle_{I}\bigg)^{1/2}\\
&~~~~~~~~~~~~\leq C[w]_{A^d_2}\|b\|_{BMO^{\,d}}\|f\|_{L^2(w^{-1})}\|g\|_{L^2(w)}\,.
\end{align*}
Since $h^w_J$ is constant on $\hat{I}$, for $J\supsetneq \hat{I}$ and we denote this constant by $h^w_J(\hat{I})\,.$ Then we know by (\ref{Pr:e4}),
$$\sum_{J:J\supsetneq \hat{I}}\langle g,h^w_J\rangle_w\langle h_I,h^w_J\rangle_w=\sum_{J:J\supsetneq \hat{I}}\langle g,h^w_J\rangle_wh^w_J(\hat{I})\langle h_I,w\rangle=\langle g\rangle_{\hat{I},w}\langle h_I,w\rangle\,.$$
Thus, we can rewrite (\ref{SP:e3})
\begin{align}
&\bigg|\sum_{I\in\mathcal{D}}\textrm{sgn}(I)\langle w^{-1}\rangle_{\hat{I}}\langle f\rangle_{\hat{I},w^{-1}}\langle b,h_{\hat{I}}\rangle \langle g\rangle_{\hat{I},w}\langle h_I,w\rangle\bigg|\nonumber\\
&~~~~~~~~~~~~\leq\sum_{I\in\mathcal{D}}\langle
w^{-1}\rangle_{\hat{I}}\langle f\rangle_{\hat{I},w^{-1}}\,|\langle
b,h_{\hat{I}}\rangle|\,\langle g\rangle_{\hat{I},w}|\langle
h_I,w\rangle|\label{SP:dc}\\
&~~~~~~~~~~~~=\sum_{I\in\mathcal{D}}\langle
w^{-1}\rangle_I\,|\langle b,h_I\rangle|\,(|\langle
h_{I_-},w\rangle|+|\langle h_{I_+},w\rangle|)\langle
f\rangle_{I,w^{-1}}\langle g\rangle_{I,w}\,.
\label{SP:e5}\end{align} We claim the sum (\ref{SP:e5}) is bounded
by
$[w]_{A^d_2}\|b\|_{BMO^{\,d}}\|f\|_{L^2(w^{-1})}\|g\|_{L^2(w)}\,.$
We are going to prove it using Petermichl's Bilinear Embedding
Theorem \ref{BIT}. Thus, we need to show that the following three
embedding conditions hold,
\begin{align}
&\forall~J\in\mathcal{D},~\frac{1}{|J|}\sum_{I\in\mathcal{D}(J)}|\langle b,h_I\rangle|\,\langle w^{-1}\rangle_I(|\langle h_{I_-},w\rangle|+|\langle h_{I_+},w\rangle|)\frac{1}{\langle w\rangle_I}\leq  C[w]_{A^d_2}\|b\|_{BMO^{\,d}}\,\langle w^{-1}\rangle_J\,,\label{SP:e6}\\
&\forall~J\in\mathcal{D},~\frac{1}{|J|}\sum_{I\in\mathcal{D}(J)}|\langle b,h_I\rangle|\,\langle w^{-1}\rangle_I(|\langle h_{I_-},w\rangle|+|\langle h_{I_+},w\rangle|)\frac{1}{\langle w^{-1}\rangle_I}\leq  C[w]_{A^d_2}\|b\|_{BMO^{\,d}}\,\langle w\rangle_J\,,\label{SP:e7}\\
&\forall~J\in\mathcal{D},~\frac{1}{|J|}\sum_{I\in\mathcal{D}(J)}|\langle
b,h_I\rangle|\,\langle w^{-1}\rangle_I(|\langle
h_{I_-},w\rangle|+|\langle h_{I_+},w\rangle|)\leq
C[w]_{A^d_2}\|b\|_{BMO^{\,d}}\,.\label{SP:e8}
\end{align}
After we split the sum in (\ref{SP:e6}):
$$\frac{1}{|J|}\sum_{I\in\mathcal{D}(J)}|\langle b,h_I\rangle|\,\langle w^{-1}\rangle_I|\langle h_{I_-},w\rangle|\frac{1}{\langle
w\rangle_I}+\frac{1}{|J|}\sum_{I\in\mathcal{D}(J)}|\langle
b,h_I\rangle|\,\langle w^{-1}\rangle_I|\langle
h_{I_+},w\rangle|\frac{1}{\langle w\rangle_I}\,,$$ we start with
Cauchy-Schwarz inequality to estimate the first sum of embedding
condition (\ref{SP:e6}),
\begin{align}
&\frac{1}{|J|}\sum_{I\in\mathcal{D}(J)}|\langle b,h_I\rangle|\,\langle w^{-1}\rangle_I|\langle h_{I_-},w\rangle|\frac{1}{\langle w\rangle_I}=\frac{1}{|J|}\sum_{I\in\mathcal{D}(J)}|\langle b,h_I\rangle|\,\langle w^{-1}\rangle_I\frac{\sqrt{|I_-|}|\Delta_{I_-}w|}{\langle w\rangle_I}\nonumber\\
&~~~~~~~~~~~~\leq\bigg(\frac{1}{|J|}\sum_{I\in\mathcal{D}(J)}\langle b,h_I\rangle^2\langle w^{-1}\rangle^2_I\langle w\rangle _I\bigg)^{1/2}\bigg(\frac{1}{|J|}\sum_{I\in\mathcal{D}(J)}|I_-||\Delta_{I_-}w|^2\frac{1}{\langle w\rangle_I^3}\bigg)^{1/2}\nonumber\\
&~~~~~~~~~~~~\leq C[w]^{1/2}_{A_2}\bigg(\frac{1}{|J|}\sum_{I\in\mathcal{D}(J)}\langle b,h_I\rangle^2\langle w^{-1}\rangle\bigg)^{1/2}\bigg(\frac{1}{|J|}\sum_{I\in\mathcal{D}(J)}|I_-||\Delta_{I_-}w|^2\frac{1}{\langle w\rangle_{I_-}^3}\bigg)^{1/2}\nonumber\\
&~~~~~~~~~~~~\leq C[w]_{A^d_2}\|b\|_{BMO^{\,d}}\langle w^{-1}\rangle_J\,.\label{SP:e9}
\end{align}
Inequality (\ref{SP:e9}) due to Lemma \ref{icl1} and
(\ref{GB:e5})\,. Also, the other sum can be estimated by exactly the same method. Thus we have the embedding condition (\ref{SP:e6}). To
see the embedding condition (\ref{SP:e7}), it is enough to show
$$\frac{1}{|J|}\sum_{I\in\mathcal{D}(J)}|\langle b,h_I\rangle\,\langle h_{I_-},w\rangle|\leq C[w]_{A^d_2}\|b\|_{BMO^{\,d}}\langle w\rangle
_J\,,$$ as we did above. We use Cauchy-Schwarz inequality for
embedding condition (\ref{SP:e7}), then
\begin{align}
\frac{1}{|J|}\sum_{I\in\mathcal{D}(J)}|\langle b,h_I\rangle\langle h_{I_-},w\rangle|&=\frac{1}{|J|}\sum_{I\in\mathcal{D}(J)}|\langle b,h_I\rangle|\sqrt{|I_-|}|\Delta_{I_-}w|\nonumber\\
&\leq\bigg(\frac{1}{|J|}\sum_{I\in\mathcal{D}(J)}\langle b,h_I\rangle^2\frac{1}{\langle w^{-1}\rangle_I}\bigg)^{1/2}\bigg(\frac{1}{|J|}\sum_{I\in\mathcal{D}(J)}|I_-||\Delta_{I_-}w|^2\langle w^{-1}\rangle_I\bigg)^{1/2}\nonumber\\
&\leq C\|b\|_{BMO^{\,d}}\langle w\rangle_J^{1/2}\bigg(\frac{[w]_{A_2}}{|J|}\sum_{I\in\mathcal{D}(J)}|I_-||\Delta_{I_-}w|^2\frac{1}{\langle w\rangle_{I_-}}\bigg)^{1/2}\label{SP:e10}\\
&\leq C[w]_{A^d_2}\|b\|_{BMO^{\,d}}\langle w\rangle _J\,.\label{SP:e11}
\end{align}
Here inequality (\ref{SP:e10}) uses Lemma \ref{icl2}, and inequality (\ref{SP:e11}) uses the fact that $\langle w\rangle^{-1}_I\leq 2\langle w\rangle^{-1}_{I_-}$ and Theorem \ref{WSB} after shifting the indices.\\

If we show the embedding condition (\ref{SP:e8}), then we can
immediately finish the estimate for (\ref{SP:e5}) with bound
$C[w]_{A^d_2}\|b\|_{BMO^{\,d}}\|f\|_{L^2(w^{-1})}\|g\|_{L^2(w)}\,.$
Combining this and (\ref{SP:e4}) will give us our desire result.
\end{proof}

\section{Proof for embedding condition (\ref{SP:e8})}

The following lemma lies at the heart of the matter for the proof of the embedding condition (\ref{SP:e8})\,.
\begin{lemma}\label{wls}
There is a positive constant $C$ so that for all dyadic interval $J\in\mathcal{D}$
\begin{equation}
\frac{1}{|J|}\sum_{I\in\mathcal{D}(J)}|I|\langle
w\rangle_I^{1/4}\langle w^{-1}\rangle_I^{1/4}\bigg(\frac{|\Delta_{I_+}w|+|\Delta_{I_-}w|}{\langle w\rangle_I}\bigg)^2\leq C\langle
w\rangle_J^{1/4}\langle
w^{-1}\rangle_J^{1/4}\,,\label{IC:e1}\end{equation} whenever $w$ is a weight. Moreover, if $w\in A^d_2$ then for all $J\in\mathcal{D}$
$$
\frac{1}{|J|}\sum_{I\in\mathcal{D}(J)}|I|\langle
w\rangle_I\langle w^{-1}\rangle_I\bigg(\frac{|\Delta_{I_+}w|+|\Delta_{I_-}w|}{\langle w\rangle_I}\bigg)^2\leq C\,[w]_{A^d_2}\,.$$
\end{lemma}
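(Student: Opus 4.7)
The ``moreover'' statement follows quickly from (\ref{IC:e1}). When $w\in A_2^d$, $\langle w\rangle_I\langle w^{-1}\rangle_I\le[w]_{A_2^d}$, so
$$\langle w\rangle_I\langle w^{-1}\rangle_I=(\langle w\rangle_I\langle w^{-1}\rangle_I)^{3/4}(\langle w\rangle_I\langle w^{-1}\rangle_I)^{1/4}\le[w]_{A_2^d}^{3/4}\,\langle w\rangle_I^{1/4}\langle w^{-1}\rangle_I^{1/4}.$$
Inserting this into the ``moreover'' sum, invoking (\ref{IC:e1}), and using $\langle w\rangle_J^{1/4}\langle w^{-1}\rangle_J^{1/4}\le[w]_{A_2^d}^{1/4}$ yields the claimed $C[w]_{A_2^d}$ bound. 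So the real work lies in proving (\ref{IC:e1}) for an arbitrary weight.

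The plan for (\ref{IC:e1}) is a Bellman function argument preceded by a reindexing. First apply $(a+b)^2\le 2(a^2+b^2)$ to split the numerator into separate $|\Delta_{I_+}w|^2$ and $|\Delta_{I_-}w|^2$ pieces, then reindex via $K=I_\pm$ (so $I=\hat K$ with $|I|=2|K|$), so that each piece becomes a sum over $K\in\mathcal{D}(J)\setminus\{J\}$ carrying $|\Delta_K w|^2$ with prefactor $|K|\langle w\rangle_{\hat K}^{1/4}\langle w^{-1}\rangle_{\hat K}^{1/4}/\langle w\rangle_{\hat K}^2$. Using $\langle w\rangle_{\hat K}\ge\tfrac12\langle w\rangle_K$ in the denominator and bundling sibling contributions under their common parent so as to exploit the symmetry $\langle w\rangle_{\hat K}=(\langle w\rangle_K+\langle w\rangle_{K^s})/2$, this reduces (\ref{IC:e1}) to the one-level Carleson-type estimate
$$\frac{1}{|J|}\sum_{K\in\mathcal{D}(J)}|K|\frac{\langle w\rangle_K^{1/4}\langle w^{-1}\rangle_K^{1/4}}{\langle w\rangle_K^2}|\Delta_K w|^2\le C\langle w\rangle_J^{1/4}\langle w^{-1}\rangle_J^{1/4}.$$

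This one-level estimate is proved by the Bellman function $B(x,y)=(xy)^{1/4}$ on the positive quadrant. A direct Hessian calculation gives $B_{xx}=-\tfrac{3}{16}x^{-7/4}y^{1/4}$, $B_{yy}=-\tfrac{3}{16}x^{1/4}y^{-7/4}$, $B_{xy}=\tfrac{1}{16}x^{-3/4}y^{-3/4}$, so $-D^2B$ is strictly positive definite; Taylor's theorem then delivers the drop inequality
$$B(x,y)-\tfrac12\bigl[B(x_+,y_+)+B(x_-,y_-)\bigr]\ge c\,\frac{(\Delta x)^2}{x^2}(xy)^{1/4},$$
with $x=(x_++x_-)/2$, $y=(y_++y_-)/2$, $\Delta x=(x_+-x_-)/2$, uniformly on the domain. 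Specializing $x=\langle w\rangle_I$, $y=\langle w^{-1}\rangle_I$, multiplying by $|I|$, and telescoping over $\mathcal{D}(J)$ (using $B\ge 0$ to discard the fine-scale boundary term) yields the desired one-level estimate.

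The main obstacle will be the reindexing step: because (\ref{IC:e1}) makes no $A_2$ assumption, there is no pointwise control on the ratio $\langle w^{-1}\rangle_{\hat K}/\langle w^{-1}\rangle_K$, so a naive termwise passage from $\hat K$-averages to $K$-averages loses too much. The remedy is to treat the two siblings of each $\hat K$ together and to exploit the full symmetry of the Bellman drop, which controls $(\Delta_I w^{-1}/\langle w^{-1}\rangle_I)^2$ with the same $(xy)^{1/4}$ weight, thereby absorbing any asymmetry in the distribution of $w$ (or $w^{-1}$) mass between siblings.
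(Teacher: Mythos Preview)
Your deduction of the ``moreover'' clause from \eqref{IC:e1} is fine, and the one-level Carleson estimate
\[
\frac{1}{|J|}\sum_{K\in\mathcal{D}(J)}|K|\,\frac{\langle w\rangle_K^{1/4}\langle w^{-1}\rangle_K^{1/4}}{\langle w\rangle_K^{2}}\,|\Delta_K w|^2\le C\,\langle w\rangle_J^{1/4}\langle w^{-1}\rangle_J^{1/4}
\]
does follow from the concavity of $B(x,y)=(xy)^{1/4}$ exactly as you describe. The gap is the reduction step, and your own diagnosis of the obstacle is correct while your proposed remedy is not.

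After reindexing, the term attached to $|\Delta_K w|^2$ carries the \emph{parent} prefactor $\langle w^{-1}\rangle_{\hat K}^{1/4}\langle w\rangle_{\hat K}^{-7/4}$. The factor $\langle w\rangle_{\hat K}^{-7/4}$ is harmless (since $\langle w\rangle_K\le 2\langle w\rangle_{\hat K}$), but $\langle w^{-1}\rangle_{\hat K}^{1/4}$ is not controlled by $\langle w^{-1}\rangle_K^{1/4}$ without $A_2$. Grouping siblings does not rescue this: if almost all of the $w^{-1}$-mass on $\hat K$ sits on the sibling $K^s$ while $|\Delta_K w|$ is large and $|\Delta_{K^s}w|$ is small, the parent-weighted term $\langle w^{-1}\rangle_{\hat K}^{1/4}|\Delta_K w|^2$ is not dominated by either child-weighted term. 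Your appeal to the $(\Delta_I w^{-1})^2$ gain from the symmetric drop of $B$ does not help either, because that gain is at the \emph{same} level $I$, whereas the mismatch you need to absorb is between the scale of the difference ($K$) and the scale of the prefactor ($\hat K$); one Bellman level of $B$ cannot bridge those.

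The paper resolves this by building a \emph{three-variable} Bellman function that carries the current-level difference as an explicit argument:
\[
A(u,v,\Delta u)=a\,B(u,v)+B(u+\Delta u,\,v)+B(u-\Delta u,\,v),
\]
with $a$ a large enough constant. When one passes from $I$ to $I_\pm$, the slot $\Delta u$ is updated to the \emph{child}-level differences $\Delta u_1=\Delta_{I_+}w$, $\Delta u_2=\Delta_{I_-}w$, and a Taylor expansion of the shifted $B$-terms about the parent point $(u,v)=(\langle w\rangle_I,\langle w^{-1}\rangle_I)$ produces the drop
\[
A(u,v,\Delta u)-\tfrac12\bigl[A(u_+,v_+,\Delta u_1)+A(u_-,v_-,\Delta u_2)\bigr]\ \ge\ c\,\frac{v^{1/4}}{u^{7/4}}\bigl(\Delta u_1^{2}+\Delta u_2^{2}\bigr),
\]
i.e.\ child-level differences paired with the parent-level weight $\langle w^{-1}\rangle_I^{1/4}\langle w\rangle_I^{-7/4}$ --- exactly the combination in \eqref{IC:e1}. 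The extra $aB(u,v)$ term is there to absorb an unfavourable sign coming from the second-order remainder of $B(u\pm\Delta u,v)$; this is where the constant $a$ is chosen. Telescoping $|I|A$ over $\mathcal{D}(J)$ (using $0\le A\le(a+2)(uv)^{1/4}$) then gives \eqref{IC:e1} directly, with no reindexing needed. That shift of one dyadic level into the Bellman variable is the missing idea in your argument.
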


\begin{proof}[Proof of condition (\ref{SP:e8})]
By using Cauchy-Schwarz inequality and Lemma \ref{wls}, we have:
\begin{align*}
&\frac{1}{|J|}\sum_{I\in\mathcal{D}(J)}\langle b,h_I\rangle\langle
w^{-1}\rangle_I(|\langle h_{I_-},w\rangle|+|\langle
h_{I_+},w\rangle|)\\
&~~~~~~=\frac{1}{|J|}\sum_{I\in\mathcal{D}(J)}\langle
b,h_I\rangle\langle
w^{-1}\rangle_I\sqrt{\frac{|I|}{2}}\big(|\langle
w\rangle_{I_{-+}}-\langle w\rangle_{I_{--}}|+|\langle
w\rangle_{I_{++}}-\langle w\rangle_{I_{+-}}|\,\big)\\
&~~~~~~\leq\frac{1}{\sqrt{2}}\bigg(\frac{1}{|J|}\sum_{I\in\mathcal{D}(J)}\langle
b,h_I\rangle^2\langle w^{-1}\rangle_I\langle
w\rangle_I\bigg)^{1/2}\bigg(\frac{1}{|J|}\sum_{I\in\mathcal{D}(J)}|I|\langle
w^{-1}\rangle_I\langle
w\rangle_I^{-1}(|\Delta_{I_+}w|+|\Delta_{I_-}w|)^2\bigg)^{1/2}\\
&~~~~~~\leq
C[w]_{A^d_2}\|b\|_{BMO^{\,d}}\,.
\end{align*}
\end{proof}
We turn to the proof of Lemma \ref{wls}. In the first place, we need to revisit some properties of function $B(u,v):=\sqrt[4]{uv}$ on the domain $\mathfrak{D}_0$ which is given by
$$\{(u,v)\in\mathbb{R}^2_+\,:~uv\geq 1/2\,\}\,.$$
It is known, we refer to \cite{OB},
that $B(u,v)$ satisfies the following differential inequality in $\mathfrak{D}_0$
\begin{equation}
-(du,dv)d^2B(u,v)(du,dv)^t\geq\frac{1}{8}\frac{v^{1/4}}{u^{7/4}}|du|^2\,.\label{IC:e2}
\end{equation}
Furthermore, this implies the following convexity condition.
For all $(u,v),\,(u_{\pm},v_{\pm})\in\mathfrak{D}_0\,,$
\begin{equation}
B(u,v)-\frac{B(u_+,v_+)+B(u_-,v_-)}{2}\geq C_1\frac{v^{1/4}}{u^{7/4}}(u_+-u_-)^2\,,
\end{equation}
where $u=(u_++u_-)/2$ and $v=(v_++v_-)/2\,.$
Let us define
$$A(u,v,\Delta u):=aB(u,v)+B(u+\Delta u,v)+B(u-\Delta u,v)\,,$$
on the domain $\mathfrak{D}_1$ with some positive constant $a>0\,.$ Here $(u,v,\Delta u)\in\mathfrak{D}_1$ means all pairs $(u,v),\,(u+\Delta u,v),\,(u-\Delta u,v)\in \mathfrak{D}_0\,.$
Then $A$ has the size property and the convexity property:
\begin{equation} \textrm{if}~~(u,v,\Delta u)\in\mathfrak{D}_1,~~\textrm{then}~~0\leq A(u,v,\Delta u)\leq (a+2)\sqrt[4]{uv}\,,\label{IC:e4}
\end{equation} and
\begin{equation}
A(u,v,\Delta u)-\frac{1}{2}\big[A(u_+,v_+,\Delta u_1)+A(u_-,v_-,\Delta u_2)\big]\geq C_2\frac{v^{1/4}}{u^{7/4}}(\Delta u_1^2+\Delta u_2^2)\,,\label{IC:e5}
\end{equation}
where $u=(u_++u_-)/2\,,v=(v_++v_-)/2,\,$ and $\Delta u=(u_+-u_+)/2\,.$
The property (\ref{IC:e5}) is directly from the definition of function $B(u,v)\,.$ At the end, $\Delta u$ will play the role of $\Delta_Iw$, $\Delta u_1$ is $\Delta_{I_+}w$, and $\Delta u_2$ is $\Delta_{I_-}w\,.$
We can rewrite left hand side of the inequality (\ref{IC:e5}) as follows
\begin{align}
&A(u,v,\Delta u)-\frac{1}{2}\big[A(u_+,v_+,\Delta u_1)+A(u_-,v_-,\Delta u_2)\big]\nonumber\\
&=\,aB(u,v)+B(u+\Delta u,v)+B(u-\Delta u,v)-\frac{1}{2}\big[aB(u_+,v_+)+B(u_++\Delta u_1,v_+)+B(u_+-\Delta u_1,v_+)\nonumber\\
&~~~~+aB(u_-,v_-)+B(u_-+\Delta u_1,\Delta u_-)+B(u_--\Delta u_1,v_-)\big]\nonumber\\
&=\,aB(u,v)-\frac{a}{2}(B(u_+,v_+)+B(u_-,v_-))+B(u_+,v)+B(u_-,v)-\frac{1}{2}\Big[B(u+\Delta u+\Delta u_1,v+\Delta v)\nonumber\\
&~~~~+B(u+\Delta u-\Delta u_1,v+\Delta v)+B(u-\Delta u+\Delta u_2,v-\Delta v)+B(u-\Delta u-\Delta u_2,v-\Delta v)\Big]\,.\label{IC:e6}
\end{align}
Using Taylor's theorem:
\begin{equation*}
B(u+u_0,v+v_0)=B(u,v)+\nabla B(u,v)(u_0,v_0)^t+\int^1_0(1-s)(u_0,v_0)d^2B(u+su_0,v+sv_0)(u_0,v_0)^tds\,,\end{equation*}
and the convexity condition of $B(u,v)$, we are going to estimate the lower bounds of (\ref{IC:e6}).
\begin{align}
&-\frac{1}{2}B(u+\Delta u+\Delta u_1,v+\Delta v)\nonumber\\
&=-\frac{1}{2}\Big(B(u,v)+\nabla B(u,v)(\Delta u+\Delta u_1,\Delta v)^t\Big)\nonumber\\
&~~~~-\int^1_0(1-s)(\Delta u+\Delta u_1,\Delta v)d^2B(u+s(\Delta u+\Delta u_1),v+s\Delta v)(\Delta u+\Delta u_1,\Delta v)^tds\nonumber\\
&\geq -\frac{1}{2}\Big(B(u,v)+\nabla B(u,v)(\Delta u+\Delta u_1,\Delta v)^t\Big)+\frac{1}{8}\int^1_0(1-s)\frac{(v+s\Delta v)^{1/4}}{(u+s(\Delta u+\Delta u_1))^{7/4}}(\Delta u+\Delta u_1)^2ds\nonumber\\
&\geq -\frac{1}{2}\Big(B(u,v)+\nabla B(u,v)(\Delta u+\Delta u_1,\Delta v)^t\Big)+\frac{(\Delta u+\Delta u_1)^2}{8(4u)^{7/4}}\int^1_0(1-s)(v+s\Delta v)^{1/4}ds\label{IC:e7}\\
&\geq -\frac{1}{2}\Big(B(u,v)+\nabla B(u,v)(\Delta u+\Delta u_1,\Delta v)^t\Big)+\frac{(\Delta u+\Delta u_1)^2v^{1/4}}{8(4u)^{7/4}}\int^1_0(1-s)(1+s\frac{\Delta v}{v})^{1/4}ds\nonumber\\
&\geq -\frac{1}{2}\Big(B(u,v)+\nabla B(u,v)(\Delta u+\Delta u_1,\Delta v)^t\Big)+\frac{1}{72\cdot4^{3/4}}\frac{v^{1/4}}{u^{7/4}}(\Delta u+\Delta u_1)^2\,.\label{IC:e8}
\end{align}
Inequality (\ref{IC:e7}) is due to the following inequalities $$|\Delta u|=\frac{|u_+-u_-|}{2}\leq \frac{|u_++u_-|}{2}= u~\textrm{and}~|\Delta u_1|=\frac{|u_{++}-u_{+-}|}{2}\leq u_+\leq 2u\,.$$
Since $(1-s)^{1/4}\leq(1-|\beta|s)^{1/4}\leq (1+\beta s)^{1/4}\,$ for any $|\,\beta|<1$, it is clear that
$$\int^1_0(1-s)(1+\beta s)^{1/4}ds\geq\int^1_0(1-s)^{5/4}ds=\frac{4}{9}\,,$$ and this allows the inequality (\ref{IC:e8}). With the same arguments, we also estimate the following lower
bounds:
\begin{align}
&-\frac{1}{2}\Big[B(u+\Delta u-\Delta u_1,v+\Delta v)+B(u-\Delta u+\Delta u_2,v-\Delta v)+B(u-\Delta u-\Delta u_2,v-\Delta v)\Big]\nonumber\\
&~~~~\geq -\frac{1}{2}\Big(B(u,v)+\nabla B(u,v)(\Delta u-\Delta u_1,\Delta v)^t\Big)+\frac{1}{72\cdot4^{3/4}}\frac{v^{1/4}}{u^{7/4}}(\Delta u-\Delta u_1)^2\nonumber\\
&~~~~~~~-\frac{1}{2}\Big(B(u,v)+\nabla B(u,v)(-\Delta u+\Delta u_2,-\Delta v)^t\Big)+\frac{1}{72\cdot4^{3/4}}\frac{v^{1/4}}{u^{7/4}}(-\Delta u+\Delta u_2)^2\nonumber\\
&~~~~~~~-\frac{1}{2}\Big(B(u,v)+\nabla B(u,v)(-\Delta u-\Delta u_2,-\Delta v)^t\Big)+\frac{1}{72\cdot4^{3/4}}\frac{v^{1/4}}{u^{7/4}}(\Delta u+\Delta u_2)^2\,.\label{IC:e9}
\end{align}
We can have the following inequality by combining (\ref{IC:e8}), (\ref{IC:e9}) and (\ref{IC:e6}),
\begin{align}
&A(u,v,\Delta u)-\frac{1}{2}\big[A(u_+,v_+,\Delta u_1)+A(u_-,v_-,\Delta u_2)\big]\nonumber\\
&~~~~~~~~~~~~\geq\,(a-2)B(u,v)-\frac{a}{2}(B(u_+,v_+)+B(u_-,v_-))+B(u_+,v)+B(u_-,v)\nonumber\\
&~~~~~~~~~~~~~~~~+\frac{1}{36\cdot4^{3/4}}\frac{v^{1/4}}{u^{7/4}}(2\Delta u^2+\Delta u_1^2+\Delta u_2^2)\nonumber\\
&~~~~~~~~~~~~\geq\,\frac{1}{36\cdot4^{3/4}}\frac{v^{1/4}}{u^{7/4}}(\Delta u_1^2+\Delta u_2^2)\,.\label{IC:e10}
\end{align}
To see the inequality (\ref{IC:e10}), using convexity condition of $B(u,v)=\sqrt[4]{uv}$ and inequality: $(1-s)u\leq u-s\Delta u\leq u+s\Delta u\,,$
\begin{align}
&(a-2)B(u,v)-\frac{a}{2}\Big(B(u_+,v_+)+B(u_-,v_-)\Big)+B(u_+,v)+B(u_-,v)\nonumber\\
&~~~~~~~~~~~~= a\bigg(B(u,v)-\frac{1}{2}(B(u_+,v_+)+B(u_-,v_-)\bigg)\nonumber\\
&~~~~~~~~~~~~~~~-\bigg(\frac{3}{16}\Delta u^2\int^1_0(1-s)v^{1/4}(u+s\Delta u)^{-7/4}ds+\frac{3}{16}\Delta u^2\int^1_0(1-s)v^{1/4}(u-s\Delta u)^{-7/4}ds\bigg)\nonumber\\
&~~~~~~~~~~~~\geq aC_1\frac{v^{1/4}}{u^{7/4}}\Delta u^2-\frac{6}{16}\Delta u^2\frac{v^{1/4}}{u^{7/4}}\int^1_0(1-s)^{-3/4}ds\nonumber\\
&~~~~~~~~~~~~=  aC_1\frac{v^{1/4}}{u^{7/4}}\Delta u^2-\frac{3}{2}\Delta u^2\frac{v^{1/4}}{u^{7/4}}\nonumber\\
&~~~~~~~~~~~~=\bigg(aC_1-\frac{3}{2}\bigg)\frac{v^{1/4}}{u^{7/4}}\Delta u^2\,.\label{IC:p1}
\end{align}
Choosing a constant $a$ sufficiently large so that $aC_1>3/2$, quantity in (\ref{IC:p1}) remains positive.
This observation and discarding nonnegative terms yield inequality (\ref{IC:e10}). Choosing the constant $C_2=1/(36\cdot4^{3/4})$ in (\ref{IC:e5}) completes the proof of the concavity property of $A(u,v,\Delta u)\,.$ We now turn to the proof of Lemma \ref{wls}.
\begin{proof}[Proof of Lemma \ref{wls}]
Let $u_I:=\langle
w\rangle_I,\,v_I:=\langle
w^{-1}\rangle_I,\,u_{\pm}=u_{I_{\pm}},\,v_{\pm}=v_{I_{\pm}},\,\Delta u_I=\Delta_Iw,\,\Delta u_1=\Delta u_{I_+},$ and $\Delta u_2=\Delta u_{I_-}\,.$ Then by H\"{o}lder's inequality $(u,v,\Delta u),\, (u_+,v_+,\Delta u_1),\,$ and $(u_-,v_-,\Delta u_2)$ belong to $\mathfrak{D}_1\,.$ Fix $J\in\mathcal{D}\,,$ by properties (\ref{IC:e4}) and (\ref{IC:e5})
\begin{align*}
&(a+2)|J|\sqrt[4]{\langle w\rangle_J\langle w^{-1}\rangle_J}\geq
|J|A(u_J,v_J,\Delta u_J)\\
&~~~~~~~~~~~~\geq\,\frac{1}{2}\bigg(|J_+|A(u_+,v_+,\Delta u_1)+|J_-|A(u_-,v_-,\Delta u_2)\bigg)+|J|C\frac{\langle w^{-1}\rangle_J}{\langle
w\rangle_J^{7/4}}(|\Delta_{J_+}u|^2+|\Delta_{J_-}u|^2)\,.
\end{align*}
Since $A(u,v,\Delta u)\geq 0$, iterating the above process will yield
\begin{equation}
|J|\sqrt[4]{\langle w\rangle_J\langle w^{-1}\rangle_J}\geq
C\sum_{I\in\mathcal{D}(J)}|I|\langle w^{-1}\rangle_I^{1/4}\langle
w\rangle^{-7/4}_I(|\Delta_{I_+}w|^2+|\Delta_{I_-}w|^2)\,.\label{IC:e12}
\end{equation}
Also, one can easily have
\begin{equation}
|J|\sqrt[4]{\langle w\rangle_J\langle w^{-1}\rangle_J}\geq
C\sum_{I\in\mathcal{D}(J)}|I|\langle w^{-1}\rangle_I^{1/4}\langle
w\rangle^{-7/4}_I\,\Delta_{I_+}w^2\,,\label{IC:e13}\end{equation}
and
\begin{equation}|J|\sqrt[4]{\langle w\rangle_J\langle w^{-1}\rangle_J}\geq
C\sum_{I\in\mathcal{D}(J)}|I|\langle w^{-1}\rangle_I^{1/4}\langle
w\rangle^{-7/4}_I\,\Delta_{I_-}w^2\,.\label{IC:e14}
\end{equation}
Then,
\begin{align*}
&\frac{1}{|J|}\sum_{I\in\mathcal{D}(J)}|I|\langle w^{-1}\rangle_I^{1/4}\langle
w\rangle^{-7/4}_I(|\Delta_{I_+}w|+|\Delta_{I_-}w|)^2\\
&~~~~~~~~~~~~=\frac{1}{|J|}\bigg(\sum_{I\in\mathcal{D}(J)}|I|\langle w^{-1}\rangle_I^{1/4}\langle
w\rangle^{-7/4}_I(|\Delta_{I_+}w|^2\\
&~~~~~~~~~~~~~~~~+|\Delta_{I_-}w|^2)+2\sum_{I\in\mathcal{D}(J)}|I|\langle w^{-1}\rangle_I^{1/4}\langle
w\rangle^{-7/4}_I(|\Delta_{I_+}w|\,|\Delta_{I_-}w|)\bigg)\\
&~~~~~~~~~~~~\leq\frac{1}{|J|}\Bigg(\sum_{I\in\mathcal{D}(J)}|I|\langle w^{-1}\rangle_I^{1/4}\langle
w\rangle^{-7/4}_I(|\Delta_{I_+}w|^2+|\Delta_{I_-}w|^2)\\
&~~~~~~~~~~~~~~~~+2\bigg(\sum_{I\in\mathcal{D}(J)}|I|\langle w^{-1}\rangle_I^{1/4}\langle
w\rangle^{-7/4}_I\,\Delta_{I_+}w^2\bigg)^{1/2}\bigg(\sum_{I\in\mathcal{D}(J)}|I|\langle w^{-1}\rangle_I^{1/4}\langle
w\rangle^{-7/4}_I\,\Delta_{I_-}w^2\bigg)^{1/2}\Bigg)\\
&~~~~~~~~~~~~\leq \frac{3}{C}\sqrt[4]{\langle w\rangle_I\langle w^{-1}\rangle_I}\,.
\end{align*}
\end{proof}
\section{Recently developed tools and their applications}
The dyadic shift operator was first introduced in \cite{P1} to
replace the weighted norm estimate for the Hilbert transform. It
was also encountered in \cite{PTV}, so Riesz transforms can be
obtained as the result of averaging some dyadic shift operator.
Recently, in  \cite{LPS} and \cite{CMP}, a more general class of
dyadic shift operators, so called the Haar shift operators were
introduced. The Hilbert transform, Riesz transforms, and
Beurling-Ahlfors operator are in the convex hull of this class, as
they can be written as appropriate averages of Haar shift
operators. Let $\mathcal{D}^n$ denote the collection of dyadic
cubes in $\R^n\,,$ $\mathcal{D}^n(Q)$ denotes dyadic subcubes of
$Q\,,$ and $|Q|$ denotes the volume of the dyadic cube $Q\,.$ We
start with some definitions.
\begin{definition}\label{DHf}
A Haar function on a cube $Q\subset \R^n$ is a function $H_Q$ such that
\begin{enumerate}
  \item $H_Q$ is supported on $Q$, and is constant on $\mathcal{D}^n(Q)\,.$
  \item $\|H_Q\|_{\infty}\leq |Q|^{-1/2}\,.$
  \item $H_Q$ has a mean zero.
\end{enumerate}
\end{definition}
\begin{definition}\label{DLaCa}
Given an integer $\tau>0\,,$ we say an operator of the following
form is in \emph{the first class of Haar shift operators of index
$\tau\,$}
$$T_{\tau}f(x)=\sum_{Q\in\mathcal{D}^n}\sum_{\substack{Q',Q''\in\mathcal{D}(Q)\\ 2^{-\tau n}|Q|\leq |Q'|,|Q''|}}a_{Q',Q''}\langle f,H_{Q'}\rangle H_{Q''}(x)\,,$$
where the constant $a_{Q',Q''}$ satisfy the following size condition: \begin{equation}|\,a_{Q',Q''}|\leq C\bigg(\frac{|Q'|}{|Q|}\cdot\frac{|Q''|}{|Q|}\bigg)^{1/2}\,.\label{RA:e1}\end{equation}
\end{definition}
Note that once a choice of Haar functions has been made
$\{H_Q\}_{Q\in\mathcal{D}^n}\,,$ then this is an orthogonal
family, such that $\|H_{Q}\|_{L^2}\leq 1\,,$ so one could
normalize in $L^2\,.$ Note that one can easily see that the dyadic
shift operator $S$ belongs to the first class of a Haar shift
operator of index $\tau=1\,$ with
$$a_{I',I''}=\left\{\begin{array}{l}
\pm 1\quad\textrm{for }I'=I,~~I''=I_{\mp}\\
\hspace{0.2cm}0\quad\textrm{ otherwise }\,.
\end{array}\right.$$
One of the main result in \cite{LPS} and \cite{CMP} is the following
\begin{theorem}[\cite{LPS}, \cite{CMP}]\label{LaCa} Let $T$ be in the first class of Haar shift operators of index $\tau\,.$ Then
for all $w\in A^d_2\,,$ there exists $C(\tau, n)$ which only
depends on $\tau$ and $n$ such that $$\|T\|_{L^2(w)\ra L^2(w)}\leq
C(\tau,n)[w]_{A^d_2}\,.$$
\end{theorem}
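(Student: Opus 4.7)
My plan is to generalize the argument that established Theorem \ref{DS:ma}, since the operator $S_b$ there is essentially a prototype Haar shift (with index $\tau=1$). By duality and the symmetry $w\in A^d_2\iff w^{-1}\in A^d_2$ with $[w^{-1}]_{A^d_2}=[w]_{A^d_2}$, it suffices to establish the bilinear inequality
$$|\langle T_\tau(w^{-1}f), g\rangle_w|\leq C(\tau,n)[w]_{A^d_2}\|f\|_{L^2(w^{-1})}\|g\|_{L^2(w)}$$
for nonnegative $f,g\in L^2(\R^n)$. I would begin by expanding $f$ and $g$ in their respective disbalanced weighted Haar systems (the $\R^n$ analogues of those built in Section 2), so that the left side becomes the double sum
$$\Big|\sum_{P,R\in\mathcal{D}^n}\langle f, h_P^{w^{-1}}\rangle_{w^{-1}}\,\langle g, h_R^w\rangle_w\,\langle T_{\tau,w^{-1}}h_P^{w^{-1}}, h_R^w\rangle_w\Big|,$$
where $T_{\tau,w^{-1}}(\phi):=T_\tau(w^{-1}\phi)$.

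The next step would be to analyze the matrix element. Each coefficient unpacks into a triple sum over $Q,Q',Q''$ with $Q',Q''\in\mathcal{D}^n(Q)$ and $|Q'|,|Q''|\geq 2^{-\tau n}|Q|$. Since $H_{Q'}$ is constant on dyadic cubes strictly containing $Q'$ and $h_P^{w^{-1}}$ is orthogonal to constants in $L^2(w^{-1})$, the factor $\langle H_{Q'}, h_P^{w^{-1}}\rangle_{w^{-1}}$ vanishes unless $Q'\subseteq P$; the symmetric constraint holds for $Q''$ and $R$. Combined with the ancestry $Q',Q''\subseteq Q$, this forces $P$ and $R$ to lie within $\tau$ generations of a common cube, and yields three regimes paralleling Sections \ref{DSII1}--\ref{DSII3}: a local regime with $P$ and $R$ at controlled distance, the regime $R$ strictly inside $P$ with $P$ far above, and the symmetric case $P\subsetneq R$.

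For the far regimes I would imitate Section \ref{DSII3} almost verbatim. The telescoping identity (\ref{Pr:e4}) converts the sum into an expression of the form $\sum_R\langle f\rangle_{R',w^{-1}}\langle g,h_R^w\rangle_w\langle T_\tau^R(w^{-1}\chi_R), h_R^w\rangle_w$, where $R'$ is an appropriate ancestor of $R$ and $T_\tau^R$ denotes the truncation of $T_\tau$ to $\mathcal{D}^n(R)$. Cauchy--Schwarz together with the Weighted Carleson Embedding Theorem \ref{CIT} then closes the estimate, provided one has the truncation bound
$$\|T_\tau^Q(w^{-1}\chi_Q)\|_{L^2(w)}\leq C(\tau,n)[w]_{A^d_2}\,w^{-1}(Q)^{1/2},$$
which is the analogue of Lemma \ref{TS}. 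The local regime contributes only $O_\tau(1)$ terms per scale, each handled by a direct bound $|\langle T_{\tau,w^{-1}}h_P^{w^{-1}}, h_R^w\rangle_w|\leq C(\tau,n)[w]_{A^d_2}^{1/2}$ coming from (\ref{Pr:p1}), (\ref{Pr:hh}) and the size condition (\ref{RA:e1}).

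The main obstacle I anticipate is this generalized truncation estimate. When $\tau=1$ it is Lemma \ref{TS} and uses the explicit form of $S_b$, but for general $\tau$ the coefficients $a_{Q',Q''}$ couple Haar functions across a window of $\tau$ scales, so a direct computation no longer suffices. My approach would be to pair $T_\tau^Q(w^{-1}\chi_Q)$ against an arbitrary test function, apply Cauchy--Schwarz to split the $Q'$- and $Q''$-sums, and invoke the size condition (\ref{RA:e1}) to reduce to Carleson-type series controlled by Wittwer's sharp Buckley inequality (Theorem \ref{WSB}) and Lemma \ref{icl2}. Once this step is secured the rest of the argument runs on the embedding machinery of Sections 4--8, with the final constant $C(\tau,n)$ emerging from tracking the $\tau$-dependence through (\ref{RA:e1}) and the $n$-dependence through the Haar systems in $\R^n$.
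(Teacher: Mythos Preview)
The paper does not give its own proof of this theorem: it is quoted from \cite{LPS} and \cite{CMP} and used as a black box (the sentence immediately after the statement reads ``There are now two different proofs of Theorem~\ref{LaCa} in \cite{LPS} and \cite{CMP}''). Those proofs use, respectively, a corona/stopping-time decomposition and Lerner's local mean oscillation formula, neither of which resembles the Bellman-function/embedding scheme you outline. What the paper \emph{does} carry out with the methods you describe is the special cases $[\lambda_b,S]$, $\pi_b^\ast S$, $S\pi_b$; your proposal is essentially an attempt to push that template to all first-class Haar shifts.

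There is a concrete gap in your local regime. You claim that when $P$ and $R$ are within $\tau$ generations the matrix element satisfies
\[
|\langle T_{\tau,w^{-1}}h_P^{w^{-1}},h_R^w\rangle_w|\leq C(\tau,n)[w]_{A_2^d}^{1/2}
\]
by direct use of (\ref{Pr:p1}) and (\ref{RA:e1}). But even for the prototype $S_b$ with $P=R=J$, that matrix element is a sum over \emph{all} $L\in\mathcal{D}(J)$, not $O_\tau(1)$ terms: see Section~5.2, where the bound is obtained only after invoking the truncation estimate (Lemma~\ref{TS}) and comes out as $[w]_{A_2^d}$, not $[w]_{A_2^d}^{1/2}$. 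The point is that $\langle H_{Q'},h_P^{w^{-1}}\rangle_{w^{-1}}$ does not vanish for $Q'\subsetneq P$ (Haar functions have Lebesgue mean zero, not $w^{-1}$-mean zero), so the inner sum over $Q,Q',Q''$ runs through all scales below $P$. Thus the truncation estimate is needed in the local regime as well as the far regimes, exactly as in Sections~5.1--5.2.

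That leaves the truncation estimate $\|T_\tau^Q(w^{-1}\chi_Q)\|_{L^2(w)}\leq C(\tau,n)[w]_{A_2^d}w^{-1}(Q)^{1/2}$ as the whole ballgame. Your plan to obtain it by Cauchy--Schwarz plus Theorem~\ref{WSB} and Lemma~\ref{icl2} is plausible for $\tau=1$ but is not what \cite{SP2} does even there (Lemma~\ref{TS} in \cite{SP2} goes through a two-weight test and Bellman functions), and for general $\tau$ the cross-scale coupling in (\ref{RA:e1}) makes a naive Cauchy--Schwarz split lose the linear dependence on $[w]_{A_2^d}$. Until you can write down that estimate with the right power, the argument is incomplete; this is precisely the step that \cite{LPS} and \cite{CMP} circumvent with their alternative machinery.
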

As a consequence of this Theorem, linear bounds for the Hilbert
transform, Riesz transforms, and the Beurling-Ahlfors operator are
recovered. There are now two different proofs of Theorem
(\ref{LaCa}) in \cite{LPS} and \cite{CMP}. The commutator
$[\lambda_b, S]\,$ is also in the first class of Haar shift
operators of index $\tau=1\,.$ Recall the observation in the
section 4,
$$[\lambda_b,S](f)=-\sum_{I\in\mathcal{D}}\Delta_Ib\langle f,h_I\rangle(h_{I_+}+h_{I_-})\,.$$ Then we can see
$$a_{I',I''}=\left\{\begin{array}{l}
-\Delta_Ib\quad\textrm{for }I'=I,~~I''=I_{\pm}\\
\hspace{0.2cm}0\quad\textrm{ otherwise }\,,
\end{array}\right.$$
moreover $|\,a_{I',I''}|=|\Delta_I b|\leq 2\|b\|_{BMO^{\,d}}\,$
this means the constant $a_{I',I''}$ satisfy the size condition
(\ref{RA:e1}) with $C=2\sqrt{2}\|b\|_{BMO^{\,d}}\,.$ These
observations, Theorem \ref{lbp}, and Theorem \ref{LaCa}
immediately recover the quadratic bound for the commutator of the
Hilbert transform which was proved in this paper. We now define
the second class of Haar shift operators of index $\tau\,.$
\begin{definition}\label{D2LaCa}
Given an integer $\tau>0\,,$ we say an operator $T$ of the form in
Definition \ref{DLaCa} is in \emph{the second class of Haar shift
operators of index $\tau\,,$} if $T$ is bounded on $L^2$ and the
function $H_Q$ satisfy the condition (a) and (b) in Definition
\ref{DLaCa}.
\end{definition}
The second class of Haar shift operators is more general than the
first class. One can easily observe that the operators $\pi_b$,
$S\pi_b$ and $\pi^{\ast}_bS$ do not satisfy the condition (c) on
Definition \ref{DHf}, however these operators satisfy the
conditions of Definition \ref{D2LaCa}. Note that the $n$-variable
paraproduct is a sum of $2^n-1$ of the second class of Haar shift
operator of index 1, the restricted $n$-variable dyadic
paraproduct $$\pi_bf=\sum_{Q\in\mathcal{D}^n}\langle
f\rangle_Q\langle b,H_Q\rangle H_Q\,.$$ In \cite{HLRV}, the linear
estimate for the maximal truncations of these operators is
presented. This also recovers our linear bound estimates for
$S\pi_b$ and $\pi^{\ast}_bS$. On the other hand, authors in
\cite{CMP} also reproduce the linear estimate for the dyadic
paraproduct with different technique.
\begin{lemma}\label{lExCo}
Let $T_{\tau}$ a Haar shift operator of the first class, then
$[\lambda_b,T_{\tau}]$ is an operator of the same class.
\end{lemma}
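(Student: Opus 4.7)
The plan is to compute the commutator $[\lambda_b, T_\tau]f$ directly in the Haar basis and recognize the result as a first-class Haar shift operator of the same index $\tau$, with coefficients whose size bound follows from a short $BMO$ telescoping argument.

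First I would exploit the fact that $\lambda_b$ acts diagonally on the Haar expansion: for any $g = \sum_Q \langle g, H_Q\rangle H_Q$, one has $\lambda_b g = \sum_Q \langle b\rangle_Q \langle g, H_Q\rangle H_Q$. Inserting the expansion of $T_\tau f$ into this formula puts $\langle b\rangle_{Q''}$ next to $H_{Q''}$ in $\lambda_b(T_\tau f)$, while applying $T_\tau$ to $\lambda_b f$ puts $\langle b\rangle_{Q'}$ next to $\langle f, H_{Q'}\rangle$. The subtraction yields
$$[\lambda_b, T_\tau]f = \sum_{Q\in\mathcal{D}^n}\sum_{\substack{Q',Q''\in\mathcal{D}(Q)\\ 2^{-\tau n}|Q|\leq |Q'|,|Q''|}} \tilde a_{Q',Q''}\,\langle f, H_{Q'}\rangle\, H_{Q''}, \qquad \tilde a_{Q',Q''} := (\langle b\rangle_{Q''}-\langle b\rangle_{Q'})\, a_{Q',Q''},$$
which is structurally a first-class Haar shift operator of index $\tau$: the parent $Q$ and the subcube constraints $2^{-\tau n}|Q| \leq |Q'|,|Q''|$ are unchanged.

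The only remaining point is to verify the size condition (\ref{RA:e1}) for the new coefficients $\tilde a_{Q',Q''}$. Since $Q'$ and $Q''$ are dyadic subcubes of $Q$ lying at most $\tau$ generations below, a standard telescoping argument using the elementary inequality $|\langle b\rangle_R - \langle b\rangle_{\widehat R}| \leq 2^n \|b\|_{BMO^d}$ (valid for any dyadic child $R$ of $\widehat R$) gives
$$|\langle b\rangle_{Q''} - \langle b\rangle_{Q'}| \leq 2\tau\cdot 2^n \|b\|_{BMO^d}.$$
Combining this with the assumed size bound on $a_{Q',Q''}$,
$$|\tilde a_{Q',Q''}| \leq 2\tau\cdot 2^n C\, \|b\|_{BMO^d} \left(\frac{|Q'|}{|Q|}\cdot\frac{|Q''|}{|Q|}\right)^{1/2},$$
so (\ref{RA:e1}) holds with new constant $C' = 2\tau\cdot 2^n C\, \|b\|_{BMO^d}$.

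I do not expect any real obstacle here. The one piece of bookkeeping worth flagging is that one must fix once and for all a common orthonormal Haar system on which $\lambda_b$, $T_\tau$ and their composition act (in $\mathbb{R}^n$ one takes the usual $2^n-1$ Haar functions per cube, so that $\lambda_b g$ is well-defined by diagonal multiplication of the Haar coefficients). With that choice the commutation unfolds cleanly because $\lambda_b$ is diagonal in the Haar basis, and the telescoping difference $\langle b\rangle_{Q''} - \langle b\rangle_{Q'}$ is exactly what converts the $BMO^d$ hypothesis on $b$ into the required size bound for the new coefficients.
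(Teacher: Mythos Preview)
Your proposal is correct and follows essentially the same route as the paper: compute the commutator directly to obtain new coefficients $\tilde a_{Q',Q''}=(\langle b\rangle_{Q''}-\langle b\rangle_{Q'})a_{Q',Q''}$, then bound $|\langle b\rangle_{Q''}-\langle b\rangle_{Q'}|$ by a constant times $\|b\|_{BMO}$ to recover the size condition. You are in fact slightly more careful than the paper, making the $\tau$- and $n$-dependence of the constant explicit via telescoping, whereas the paper simply writes $|\langle b\rangle_{Q''}-\langle b\rangle_{Q'}|\leq \|b\|_{BMO}$.
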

\begin{proof}
We are going to use the restricted multi-variable $\lambda_b$
operator which is
$$\lambda_bf=\sum_{Q\in\mathcal{D}^n}\langle b\rangle_Q\langle
f,H_Q\rangle H_Q\,.$$ One can get the $n$-variable $\lambda_b$
operator by summing over $2^n-1$ of restricted $\lambda_b$
operator. Observe that,
\begin{align*}
[\lambda_b,T_{\tau}]f&=\lambda_b(T_{\tau}f)-T_{\tau}(\lambda_b f)\\
&=\sum_{Q\in\mathcal{D}_n}\sum_{\substack{Q',Q''\in\mathcal{D}(Q)\\ 2^{-\tau n}|Q|\leq |Q'|,|Q''|}}a_{Q',Q''}\langle b\rangle_{Q''}\langle f,H_{Q'}\rangle H_{Q''}-\sum_{Q\in\mathcal{D}_n}\sum_{\substack{Q',Q''\in\mathcal{D}(Q)\\ 2^{-\tau n}|Q|\leq |Q'|,|Q''|}}a_{Q',Q''}\langle b\rangle_{Q'}\langle f,H_{Q'}\rangle H_{Q''}\\
&=\sum_{Q\in\mathcal{D}_n}\sum_{\substack{Q',Q''\in\mathcal{D}(Q)\\ 2^{-\tau n}|Q|\leq |Q'|,|Q''|}}a_{Q',Q''}(\langle b\rangle_{Q''}-\langle b\rangle_{Q'})\langle f,H_{Q'}\rangle H_{Q''}\,.
\end{align*}
Since $$\big|\,a_{Q',Q''}(\langle b\rangle_{Q''}-\langle
b\rangle_{Q'})\big|\leq \|b\|_{BMO}|\,a_{Q',Q''}|\,,$$
$[\lambda_b,T_{\tau}]$ remains in the same class of
$T_{\tau}\,.$\end{proof} Theorem \ref{LaCa} and Lemma \ref{lExCo}
allow to extend our result to more general class of commutators
including the Riesz transforms and the Beurling-Ahlfors operator
as in Theorem \ref{ExCo}.

\section{Sharp bounds}
In this section, we start proving that the quadratic estimate in
Theorem \ref{Mr:t1} is sharp, by showing an example which returns
quadratic bound. This example was discovered by C. Per\'{e}z
\cite{PP} who is kindly allowing us to reproduce it in this paper.
The same calculations show that the bounds in Theorem \ref{Pr:Mr}
are also sharp for $p\neq 2$ and $1<p<\infty\,.$ Variations over
this example will then show that the bounds in Theorem \ref{ExCo}
are sharp for the Riesz transforms and the Beurling-Ahlfors
operator as well.
\subsection{The Hilbert transform}
Consider the weight, for $0<\delta<1\,$:
$$w(x)=|x|^{1-\delta}\,.$$
It is well known that $w$ is an $A_2$ weight and
$$[w]_{A^2}\sim\frac{1}{\delta}\,.$$ We now consider the function
$f(x)=x^{-1+\delta}\chi_{(0,1)}(x)$ and BMO function
$b(x)=\log|x|\,.$ We claim that $$\|[b,H]f(x)|\geq
\frac{1}{\delta^2}f(x)\,.$$ For $0<x<1\,,$ we have
\begin{align*}
[b,H]f(x)&=\int_0^1\frac{\log x-\log y}{x-y}y^{-1+\delta}dy=\int_0^1\frac{\log(x/y)}{x-y}y^{-1+\delta}dy\\
&=x^{-1+\delta}\int_0^{1/x}\frac{\log\Big(\frac{1}{t}\Big)}{1-t}t^{-1+\delta}dt\,.
\end{align*}
Now,
$$\int_0^{1/x}\frac{\log(1/t)}{1-t}t^{-1+\delta}dt=\int_0^1\frac{\log(1/t)}{1-t}t^{-1+\delta}dt+\int_1^{1/x}\frac{\log(1/t)}{1-t}t^{-1+\delta}dt\,,$$
and since $\frac{\log(1/t)}{1-t}$ is positive for
$(0,1)\cup(1,\infty)$ we have for $0<x<1$
\begin{equation}|[b,H]f(x)|>x^{-1+\delta}\int_0^1\frac{\log(1/t)}{1-t}t^{-1+\delta}dt\,.\label{SE:e1}\end{equation}
But since
\begin{equation}\int_0^1\frac{\log(1/t)}{1-t}t^{-1+\delta}dt>\int_0^1\log(1/t)t^{-1+\delta}dt=\int_0^{\infty}se^{-s\delta}ds=\frac{1}{\delta^2}\,,\label{SE:e2}\end{equation}
our claim follows and
$$\|[b,H]f\|_{L^2(w)}\geq \frac{1}{\delta^2}\|f\|_{L^2(w)}\sim[w]^2_{A_2}\|f\|_{L^2(w)}\,.$$
A first approximation of what the bounds in $L^p(w)$ is given by
an application of the sharp extrapolation theorem for the upper
bound, paired with the knowledge of the sharp bound on $L^2(w)$ to
obtain a lower bound.
\begin{proposition}\label{Mr:c2}
For $1< p<\infty$ there exist constants $c$ and $C$ only depending
on $p$ such that
\begin{equation}c[w]^{2\min\{1,\frac{1}{p-1}\}}_{A_p}\|b\|_{BMO}\leq\|[b,H]\|_{L^p(w)\ra
L^p(w)}\leq
C[w]^{2\max\{1,\frac{1}{p-1}\}}_{A_p}\|b\|_{BMO}\,,\label{Mre1}\end{equation}for
all $b\in BMO\,.$
\end{proposition}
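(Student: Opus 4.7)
The proof splits into two parts: the upper bound follows from sharp extrapolation applied to the $L^2(w)$ quadratic bound, and the lower bound comes from an explicit power weight example combined with duality.

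For the upper bound, the plan is to invoke the sharp extrapolation theorem of \cite{DGPP} with base exponent $p_0=2$. Theorem \ref{Mr:t1} supplies the hypothesis $\|[b,H]\|_{L^2(w)\to L^2(w)}\leq C[w]_{A_2}^{2}\|b\|_{BMO}$. Since the growth exponent at $p_0=2$ is $s=2$, sharp extrapolation immediately produces, for every $1<p<\infty$ and every $w\in A_p$,
$$\|[b,H]\|_{L^p(w)\to L^p(w)}\leq C_p[w]_{A_p}^{2\max\{1,\,1/(p-1)\}}\|b\|_{BMO},$$
which is the right half of (\ref{Mre1}).

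For the lower bound at $p\geq 2$, the plan is to reproduce P\'erez's construction with a power weight tuned to $p$. Take $b(x)=\log|x|$, $w(x)=|x|^{(1-\delta)(p-1)}$, and $f(x)=x^{-1+\delta}\chi_{(0,1)}(x)$. The same change of variables $y=xt$ used between (\ref{SE:e1}) and (\ref{SE:e2}) yields the pointwise estimate $|[b,H]f(x)|\geq \delta^{-2}f(x)$ on $(0,1)$, while a direct computation for power weights gives $[w]_{A_p}\sim \delta^{-(p-1)}$. Checking that $f\in L^p(w)$ reduces to the convergence of $\int_0^1 x^{-1+\delta}\,dx$, which holds for every $\delta>0$, and the same integral controls $\|[b,H]f\|_{L^p(w)}$ from below. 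Taking the ratio gives $\|[b,H]\|_{L^p(w)\to L^p(w)}\geq c\,\delta^{-2}\sim c\,[w]_{A_p}^{2/(p-1)}=c\,[w]_{A_p}^{2\min\{1,\,1/(p-1)\}}$.

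For the lower bound at $1<p<2$, the plan is to dualize. Since $H^{\ast}=-H$, a short computation shows $[b,H]^{\ast}=[b,H]$, so
$$\|[b,H]\|_{L^p(w)\to L^p(w)}=\|[b,H]\|_{L^{p'}(\sigma)\to L^{p'}(\sigma)},\qquad \sigma=w^{1-p'},$$
with $p'>2$. Using the classical identity $[\sigma]_{A_{p'}}=[w]_{A_p}^{1/(p-1)}$ and the lower bound already proved at the dual exponent,
$$\|[b,H]\|_{L^p(w)\to L^p(w)}\geq c\,[\sigma]_{A_{p'}}^{2/(p'-1)}=c\,[w]_{A_p}^{2/((p-1)(p'-1))}=c\,[w]_{A_p}^{2},$$
since $(p-1)(p'-1)=1$, and this equals $c[w]_{A_p}^{2\min\{1,\,1/(p-1)\}}$. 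The main technical step is the verification of the pointwise estimate $|[b,H]f(x)|\geq \delta^{-2}f(x)$ for the $p$-dependent power weight: the exponent $(1-\delta)(p-1)$ has been chosen precisely so that $f$ lies in $L^p(w)$ for every $\delta>0$ and so that the $\delta^{-2}$ factor survives intact when one passes from the pointwise bound to the norm quotient.
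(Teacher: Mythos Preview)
Your proof is correct, but your route to the lower bound differs from the paper's.  For the upper bound both arguments are identical: Theorem~\ref{Mr:t1} plus sharp extrapolation.  For the lower bound, the paper argues indirectly: assuming that for some $1<r<2$ one had $\|[b,H]\|_{L^r(w)\to L^r(w)}\le C[w]_{A_r}^{2\alpha}\|b\|_{BMO}$ with $\alpha<1$, sharp extrapolation back to $p=2$ would give the exponent $2\alpha<2$ there, contradicting the sharpness at $p=2$ already established by the power-weight example; the case $r>2$ is symmetric.  Thus the paper invokes the explicit example only once (at $p=2$) and lets extrapolation transport the obstruction to every other $p$.

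Your argument instead runs the power-weight example directly at each $p\ge 2$ to produce $\|[b,H]\|_{L^p(w)\to L^p(w)}\gtrsim[w]_{A_p}^{2/(p-1)}=[w]_{A_p}^{2\min\{1,1/(p-1)\}}$, and then dualizes (using $[b,H]^*=[b,H]$ and $[\sigma]_{A_{p'}}=[w]_{A_p}^{1/(p-1)}$) to cover $1<p<2$.  This is exactly the mechanism the paper employs \emph{after} the proposition---but with the cases reversed (example for $1<p\le 2$, duality for $p>2$)---in order to prove the stronger statement that the \emph{upper} bound exponent $2\max\{1,1/(p-1)\}$ is itself sharp (Theorem~\ref{Pr:Mr}).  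Your case split is tailored to the proposition's lower bound and yields only $2\min$; had you swapped the roles (example for $p\le2$, duality for $p>2$) the same calculation would have given the full sharpness of the upper bound with no extra work.
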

\begin{proof} Because the upper bound in (\ref{Mre1}) is the direct
consequence of the quadratic bound in the Theorem \ref{Mr:t1} and
sharp extrapolation theorem, we will only prove the lower bound.
Let us assume that, for $1<r<2\,$ and $\alpha<1\,,$
$$\|[b,H]\|_{L^r(w)\ra L^r(w)}\leq C[w]^{2\alpha}_{A_r}\|b\|_{BMO}\,.$$
This and the sharp extrapolation theorem return
$$\|[b,H]\|_{L^2(w)\ra L^2(w)}\leq C[w]^{2\alpha}_{A_2}\|b\|_{BMO}\,.$$
This contradicts to the sharpness $(p=2)$\,. Similarly, one can
conclude for $p>2\,.$
\end{proof}
We now consider
the weight $w(x)=|x|^{(1-\delta)(p-1)}$ then $w$ is an $A_p$
weight with $[w]_{A_p}\sim\delta^{1-p}\,.$ By (\ref{SE:e1}) and
(\ref{SE:e2}) we have
$$\|[b,H]f\|_{L^p(w)}\geq \frac{1}{\delta^2}\|f\|_{L^p(w)}=(\delta^{1-p})^{\frac{2}{p-1}}\|f\|_{L^p(w)}\sim [w]_{A_p}^{\frac{2}{p-1}}\|f\|_{L^p(w)}\,.$$
This shows the upper bound in (\ref{Mre1}) is sharp for $1<p\leq
2\,.$ We use the duality argument to see the sharpness of the
quadratic estimate for $p>2\,.$ Note that the commutator is a
self-adjoint operator:
$$\langle bH(f)-H(bf),g\rangle=\langle
f,H^{\ast}(bg)\rangle-\langle f,bH^{\ast}(g)\rangle=\langle
f,bH(g)-H(bg)\rangle\,.$$ Consider $1<p\leq 2$ and set
$u=w^{1-p'}\,,$ then
\begin{align}
\|[b,H]\|_{L^{p'}(u)\ra
L^{p'}(u)}&=\|[b,H]\|_{L^{p'}(w^{1-p'})\ra L^{p'}(w^{1-p'})}=\|[b,H]^{\ast}\|_{L^{p'}(w^{1-p'})\ra L^{p'}(w^{1-p'})}\nonumber\\
&=\|[b,H]\|_{L^p(w)\ra L^p(w)}\leq
C\|b\|_{BMO^d}[w]^{\frac{2}{1-p}}_{A_p}\label{SE:e3}\\
&=C\|b\|_{BMO}[w^{1-p'}]^2_{A_{p'}}=C\|b\|_{BMO}[u]^2_{A_{p'}}\,.\nonumber
\end{align}
Since the inequality in (\ref{SE:e3}) is sharp, we can conclude
that the result of Theorem \ref{Pr:Mr} is also sharp for $p>2\,.$
\subsection{Beurling-Ahlfors operator}
Recall the Beurling-Ahlfors operator $\mathcal{B}$ is given by
convolution with the distributional kernel  $p.v.1/z^2$:
$$\mathcal{B}f(x,y)=p.v.\frac{1}{\pi}\int_{\R^2}\frac{f(x-u,y-v)}{(u+iv)^2}\,dudv\,.$$
Then the commutator of the Beurling-Ahlfors operator can be
written:
$$[b,\mathcal{B}]f(x,y)=p.v.\frac{1}{\pi}\int_{\R^2}\frac{b(x,y)-b(s,t)}{((x-s)+i(y-t))^2}\,f(s,t)\,dsdt\,.$$
It was observed, in \cite{DV}, that the linear bound for the
Beurling-Ahlfors operator is sharp in $L^2(w)\,,$ with weights
$w(z)=|\,z|^{\alpha}$ and functions $f(z)=|\,z|^{-\alpha}\,$ where
$|\,\alpha|<2\,.$ Similarly, we consider weights
$w(z)=|\,z|^{2-\delta}$ where $0<\delta<1$. Note that
$w(z)=|\,z|^{2-\delta}:\C\ra[0,\infty)$ is a $A_2$-weight with
$[w]_{A_2}\sim \delta^{-1}\,.$ We also consider a BMO function
$b(x)=\log|\,z|\,.$ Let
$E=\{(r,\theta)\,|\,0<r<1,\,0<\theta<\pi/2\}$ and
$\Omega=\{(r,\theta)\,|\,1<r<\infty,\,\pi<\theta<3\pi/2\}$ We are
going to estimate $|[b,\mathcal{B}]f(z)|$ for $z\in \Omega$ with a
function $f(z)=|\,z|^{\delta-2}\chi_{E}(z)\,.$ Let $z=x+iy$ and
$\zeta=s+ti\,.$ Then, for $z\in \Omega\,,$
\begin{align*}|[b,\mathcal{B}]f(z)|&=\frac{1}{\pi}\bigg|\int_{E}\frac{(b(z)-b(\zeta))f(\zeta)}{(z-\zeta)^2}\,d\zeta\bigg|\\
&=\frac{1}{\pi}\bigg|\int_{E}\frac{b(x,y)-b(s,t)}{((x-s)+i(y-t))^2}f(s,t)dsdt\bigg|\\
&=\frac{1}{\pi}\bigg|\int_{E}\frac{(\log|\,z|-\log|\,\zeta|)|\,\zeta|^{\delta-2}((x-s)^2-(y-t)^2)}{((x-s)^2+(y-t)^2)^2}\,dsdt\\
&\qquad\quad\qquad+i\int_{E}\frac{(\log|\,z|-\log|\,\zeta|)|\,\zeta|^{\delta-2}(2(x-s)(y-t))}{((x-s)^2+(y-t)^2)^2}\,dsdt\bigg|\,.
\end{align*}
For $z\in \Omega$ and $\zeta\in E\,,$ we have $(x-s)(y-t)\geq
xy\,$ and by triangle inequality
$((x-s)^2+(y-t)^2)^2=|\,z-\zeta|^4\leq (|\,z|+|\,\zeta|)^4\,.$
After neglecting the positive term (real part), we get
\begin{align*}|[b,\mathcal{B}]f(z)|^2&\geq
\frac{4}{\pi^2}\bigg(xy\int_E\frac{\log(|\,z|/|\,\zeta|)|\,\zeta|^{\delta-2}}{(|\,z|+|\,\zeta|)^4}\,dsdt\bigg)^2
=\frac{4}{\pi^2}\bigg(xy\int_0^{\pi/2}\int_0^1\frac{\log(|\,z|/r)r^{\delta-2}r}{(|\,z|+r)^4}\,drd\theta\bigg)^2\\
&=x^2y^2\bigg(\frac{1}{|\,z|^4}\int_0^1\frac{\log(|\,z|/r)r^{\delta-1}}{(1+r/|\,z|)^4}dr\bigg)^2
=x^2y^2\bigg(\frac{1}{|\,z|^4}\int^{1/|z|}_0\frac{\log(1/t)(|\,z|t)^{\delta-1}}{(1+t)^4}\,|\,z|dt\bigg)^2\\
&=x^2y^2\bigg(\frac{1}{|\,z|^{4-\delta}}\int^{1/|z|}_0\frac{\log(1/t)t^{\delta-1}}{(1+t)^4}\,dt\bigg)^2\,.
\end{align*}
Since $|\,z|/(|\,z|+1)\leq 1/(1+t)\,,$ for $t<1/|\,z|\,,$ we have
\begin{align*}
|[b,\mathcal{B}]f(z)|^2&\geq
x^2y^2\bigg(\frac{1}{|\,z|^{-\delta}(|\,z|+1)^4}\int_0^{1/|z|}\log(1/t)t^{\delta-1}\,dt\bigg)^2\\
&=\frac{x^2y^2}{|\,z|^{-2\delta}(|\,z|+1)^8}\bigg(\frac{|\,z|^{-\delta}(1+\delta\log|\,z|)}{\delta^2}\bigg)^2\\
&=\frac{x^2y^2}{(|\,z|+1)^8}\frac{(1+\delta\log|\,z|)^2}{\delta^4}\,.
\end{align*}
Then, we can estimate the $L^2(w)$-norm as follows.
\begin{align*}\|[b,\mathcal{B}]f\|^2_{L^2(w)}&\geq\frac{1}{\delta^4}\int_{\Omega}\frac{x^2y^2(1+\delta\log|\,z|)^2}{(|\,z|+1)^8}\,|\,z|^{2-\delta}\,dxdy\\
&=\frac{1}{\delta^4}\int_1^{\infty}\int_{\pi}^{3\pi/2}\frac{r^4\cos^2\theta\sin^2\theta(1+\delta\log
r)^2}{(r+1)^8}\,r^{3-\delta}drd\theta\\
&=\frac{\pi}{\delta^416}\int_1^{\infty}\frac{r^{7-\delta}(1+\delta\log
r)^2}{(r+1)^8}\,dr\geq\frac{\pi}{\delta^416}\int_1^{\infty}\frac{r^{7-\delta}(1+\delta\log
r)^2}{(2r)^8}\,dr\\
&=\frac{\pi}{\delta^42^{12}}\int_1^{\infty}(1+\delta\log
r)^2r^{-1-\delta}\,dr\\
&=\frac{\pi}{\delta^42^{12}}\bigg(\frac{1}{\delta}+\frac{2\delta}{\delta^2}+\frac{2\delta^2}{\delta^3}\bigg)=\frac{5\pi}{2^{12}}\cdot\frac{1}{\delta^5}\,.
\end{align*}
Combining with $\|f\|^2_{L^2(w)}=\pi/2\delta\,,$ we have that
$\|[b,\mathcal{B}]f\|_{L^2(w)}/\|f\|_{L^2(w)}\sim\delta^{-2}\,,$
which allows to conclude that the quadratic bound for the
commutator with the Beurling-Ahlfors operators is sharp in
$L^2(w)\,.$ Same calculations with weights
$w(z)=|\,z|^{(2-\delta)(p-1)}$ and functions
$f(z)=|\,z|^{(\delta-2)(p-1)}\,$ will provide the sharpness for
$1<p\leq 2\,,$ and it is sufficient to conclude for all
$1<p<\infty$ because the Beurling-Ahlfors operator is essentially
self adjoint operator
($\mathcal{B}^{\ast}=e^{i\phi}\mathcal{B})\,,$ so the commutator
of the Beurling-Ahlfors operator is also self adjoint.
\subsection{Riesz transforms}
Consider weights $w(x)=|\,x|^{n-\delta}$ and functions
$f(x)=x^{\delta-n}\chi_{E}(x)\,$ where $E=\{x\,|\,x \in (0,1)^n
\cap B(0,1)\}\,,$  and a BMO function $b(x)=\log|\,x|\,.$ It was
observed that $|\,x|^{n-\delta}$ is an $A_2$-weight in $\R^n$ with
$[w]_{A^2}\sim\delta^{-1}\,.$ We are going to estimate $[b,R_j]f$
over the set $\Omega=\{y\in B(0,1)^c\,|\,y_i<0\textrm{ for all
}i=1,2,...,n\}\,,$ where $R_j$ stands for the $j$-th direction
Riesz transform on $\R^n$ and is defined as follows:
$$R_jf(x)=c_n p.v.\int_{\R^n}\frac{y_j}{|\,y|^{n+1}}\,f(x-y)\,dy\,,\qquad1\leq j\leq n\,,$$
where $c_n=\Gamma((n+1)/2)/\pi^{(n+1)/2}\,.$ One can observe that,
for all $x\in E$ and fixed $y\in\Omega\,,$
$$|\,y_j-x_j|\geq |\,y_j|\textrm{ and } |\,y-x|\leq |\,y|+|\,x|\,.$$
$C_n$ denotes a constant depending only on the dimension that may
change from line to line. Then,
\begin{align*}
|[b,R_j]f(y)|&=\bigg|\int_{E}\frac{(y_j-x_j)(\log|\,y|-\log|\,x|)|\,x|^{\delta-n}}{|\,y-x|^{n+1}}\,dx\bigg|
\geq |\,y_j|\int_{E}\frac{\log(|\,y|/|\,x|)|\,x|^{\delta-n}}{(|\,y|+|\,x|)^{n+1}}\,dx\\
&\geq |\,y_j|\int_{E\cap S^{n-1}}\int_0^{1}\frac{\log(|\,y|/r)
r^{\delta-n}r^{n-1}}{(|\,y|+r)^{n+1}}\,drd\sigma
=C_n|\,y_j|\int_0^{1/|y|}\frac{\log(1/t)(t|\,y|)^{\delta-1}|\,y|}{(|\,y|+|\,y|t)^{n+1}}\,dt\\
&=\frac{C_n|\,y_j|}{|\,y|^{n+1-\delta}}\int_0^{1/|y|}\frac{\log(1/t)t^{\delta-1}}{(1+t)^{n+1}}\,dt
\geq\frac{C_n|\,y_j|}{|y|^{n+1-\delta}}\bigg(\frac{|\,y|}{|\,y|+1}\bigg)^{n+1}\int_0^{1/|y|}\log(1/t)t^{\delta-1}\,dt\\
&=\frac{C_n|\,y_j|}{|\,y|^{-\delta}(|\,y|+1)^{n+1}}\bigg(\frac{|\,y|^{-\delta}(1+\delta\log|\,y|)}{\delta^2}\bigg)
=\frac{C_n|\,y_j|}{(|\,y|+1)^{n+1}}\frac{1+\delta\log|\,y|}{\delta^2}\,.\end{align*}
We now can bound from below the $L^2(w)$-norm as follows.
\begin{align*}
\|[b,R_j]f(x)\|^2_{L^2(w)}&>\frac{C_n}{\delta^4}\int_{\Omega}\frac{y_j^2(1+\delta\log|\,y|)^2}{(|\,y|+1)^{2n+2}}\,|\,y|^{n-\delta}dy\\
&\geq\frac{C_n}{\delta^4}\int_{\Omega\cap
S^{n-1}}\int_1^{\infty}\frac{\gamma_j^2r^2(1+\delta\log
r)^2r^{n-\delta}r^{n-1}}{(r+1)^{2n+2}}\,drd\sigma(\gamma)\\
&\geq\frac{C_n}{\delta^4}\int_1^{\infty}\frac{(1+\delta\log
r)^2r^{2n-\delta+1}}{r^{2n+2}}\,dr\\
&=\frac{C_n}{\delta^4}\int_1^{\infty}(1+\delta\log r)^2
r^{-\delta-1}\,dr=\frac{C_n}{\delta^5}\,,
\end{align*}
which establishes sharpness for the commutator of the Riesz
transform when $p=2\,.$ Since $R^{\ast}_j=-R_j$, one can easily
check that the commutator of Riesz transforms are also
self-adjoint operators. Furthermore, choosing weight
$w(x)=|\,x|^{(n-\delta)(p-1)},$ we will obtain the sharpness for
$1<p<\infty$ by the same argument we used in the case of the
Hilbert transform.

\section{Acknowledgments}
This work is part of the author's Ph.D dissertation \cite{DW}. The
author like to thank Professor Carlos P\'{e}rez for his comments
and useful suggestions. Finally, the author is also very grateful
to his graduate adviser Mar\'{\i}a Cristina Pereyra for her
suggestions and helpful interaction.

\bibliographystyle{amsplain}

\noindent \textsc{Department of Mathematics and Statistics, 1 University of New Mexico, Albuquerque, NM 87131-0001}, \emph{E-mail:} dwchung@unm.edu
\textrm{ and } chdaewon@gmail.com
\end{document}